\def\shrug{\texttt{\raisebox{0.75em}{\char`\_}\char`\\\char`\_\kern-0.5ex(\kern-0.25ex\raisebox{0.25ex}{\rotatebox{45}{\raisebox{-.75ex}"\kern-1.5ex\rotatebox{-90})}}\kern-0.5ex)\kern-0.5ex\char`\_/\raisebox{0.75em}{\char`\_}}}
\numberwithin{equation}{section}
\theoremstyle{plain}
\newtheorem{theorem}{Theorem}[section]
\newtheorem{corollary}[theorem]{Corollary}
\newtheorem{lemma}[theorem]{Lemma}
\newtheorem{proposition}[theorem]{Proposition}
\theoremstyle{definition}
\theoremstyle{remark}
\newtheorem{remark}[theorem]{Remark}
\newcommand{\N}{\mathbb{N}}
\newcommand{\R}{\mathbb{R}}
\renewcommand{\S}{\mathbb{S}}
\newcommand{\D}{\mathbb{D}}
\newcommand{\ind}[1]{\mathbbm{1}_{\left\{#1\right\}}}
\DeclareMathOperator{\E}{\mathbb{E}}
\renewcommand{\P}{\mathbb{P}}
\newcommand{\dd}{\mathrm{d}}
\renewcommand{\bar}[1]{\overline{#1}}
\renewcommand{\tilde}[1]{\widetilde{#1}}
\renewcommand{\epsilon}{\varepsilon}
\renewcommand{\H}{\mathbb{H}}
\numberwithin{equation}{section}
\newcommand{\apropto}{\mathrel{\vcenter{
  \offinterlineskip\halign{\hfil$##$\cr
    \propto\cr\noalign{\kern2pt}\sim\cr\noalign{\kern-2pt}}}}}
\newcommand{\h}{\mathrm{h}}
\date{}
\title{Fragmentation processes and the convex hull of the Brownian motion in a disk}
\author{B\'en\'edicte Haas \thanks{Universit\'e Sorbonne Paris Nord, LAGA, CNRS (UMR  7539) 93430 Villetaneuse, France \newline \hspace*{0.5cm} E-mail: haas@math.univ-paris13.fr}  \quad \& \hspace{0.2cm}  Bastien Mallein\thanks{Institut de Mathématiques de Toulouse, UMR 5219 - Université de Toulouse,  France  \newline \hspace*{0.5cm} E-mail: bastien.mallein@math.univ-toulouse.fr}}
\begin{document}

\let\oldproofname=\proofname
\renewcommand{\proofname}{\rm\bf{\oldproofname}}

\maketitle

\begin{abstract}
Motivated by the study of the convex hull of the trajectory of a Brownian motion in the unit disk reflected orthogonally at its boundary, we study inhomogeneous fragmentation processes in which particles of mass $m \in (0,1)$ split at a rate proportional to $|\log m|^{-1}$. These processes do not belong to the well-studied family of self-similar fragmentation processes. Our main results characterize the Laplace transform of the typical fragment of such a process, at any time, and its large time behavior.

We connect this asymptotic behavior to the prediction obtained by physicists in \cite{DBBM22} for the growth of the perimeter of the convex hull of a Brownian motion in the disc reflected at its boundary. We also describe the large time asymptotic behavior of the whole fragmentation process. In order to implement our results, we make a detailed study of a time-changed subordinator, which may be of independent interest.
\end{abstract}

\section{Introduction and main results}

\subsection{On the convex hull of the Brownian motion in the plane and the disc}

Consider a Brownian particle $B$ in the plane, which might be thought of as the trajectory of an animal exploring its territory foraging for food. A natural way to estimate the area covered by this animal during its search for $t$ units of time is by estimating the convex hull $\tilde{H}_t = \mathrm{Hull}(\{B_s, s \leq t\})$ of its position. Using Cauchy' surface area formula \cite{Cau32,TsV}, the length $\tilde{P}_t$ of the perimeter of the convex set $\tilde{H}_t$ can be computed as
\[
  \tilde{P}_t = \int_{0}^{2\pi} \sup_{0 \leq s \leq t} (e_\theta \cdot B_s) \dd \theta
\]
writing $e_\theta = (\cos(\theta),\sin(\theta))$. Using the invariance by rotation of the law of $B$, it is then a simple exercise (see \cite{Let} and the references therein) to show that in this case
\[
  \E\Big[\tilde{P}_t \Big] = 2\pi \E\Big[ \max_{s \leq t} (e_0\cdot B_s) \Big] = 2\pi \E\Big[|e_0 \cdot B_t|\Big] = \sqrt{8\pi t}
\]
applying the reflection principle for the unidimensional Brownian motion $e_0 \cdot B$.

In \cite{DBBM22}, De Bruyne, B\'enichou, Majumdar and Schehr take interest in a similar problem associated to the Brownian motion confined to the unit disk $\D$. Write $B^\D$ for a standard Brownian motion in $\D$, starting from 0, with orthogonal reflection at the boundaries. To estimate the area covered by the process, they study the convex hull $\bar{H}_t = \mathrm{Hull}(\{B^\D_s, s \leq t\})$. As the process $B^\D$ is recurrent in $\D$, it is natural to expect that $\bar{H}_t$ converges to $\D$ as $t \to \infty$, hence that $\bar{P}_t$ the length of the perimeter of $\bar{H}_t$ converges to $2\pi$ (see Figure~\ref{fig:brownianHull}).

\begin{figure}[ht]
\centering
\begin{subfigure}{.23\linewidth}
\includegraphics[width=\textwidth]{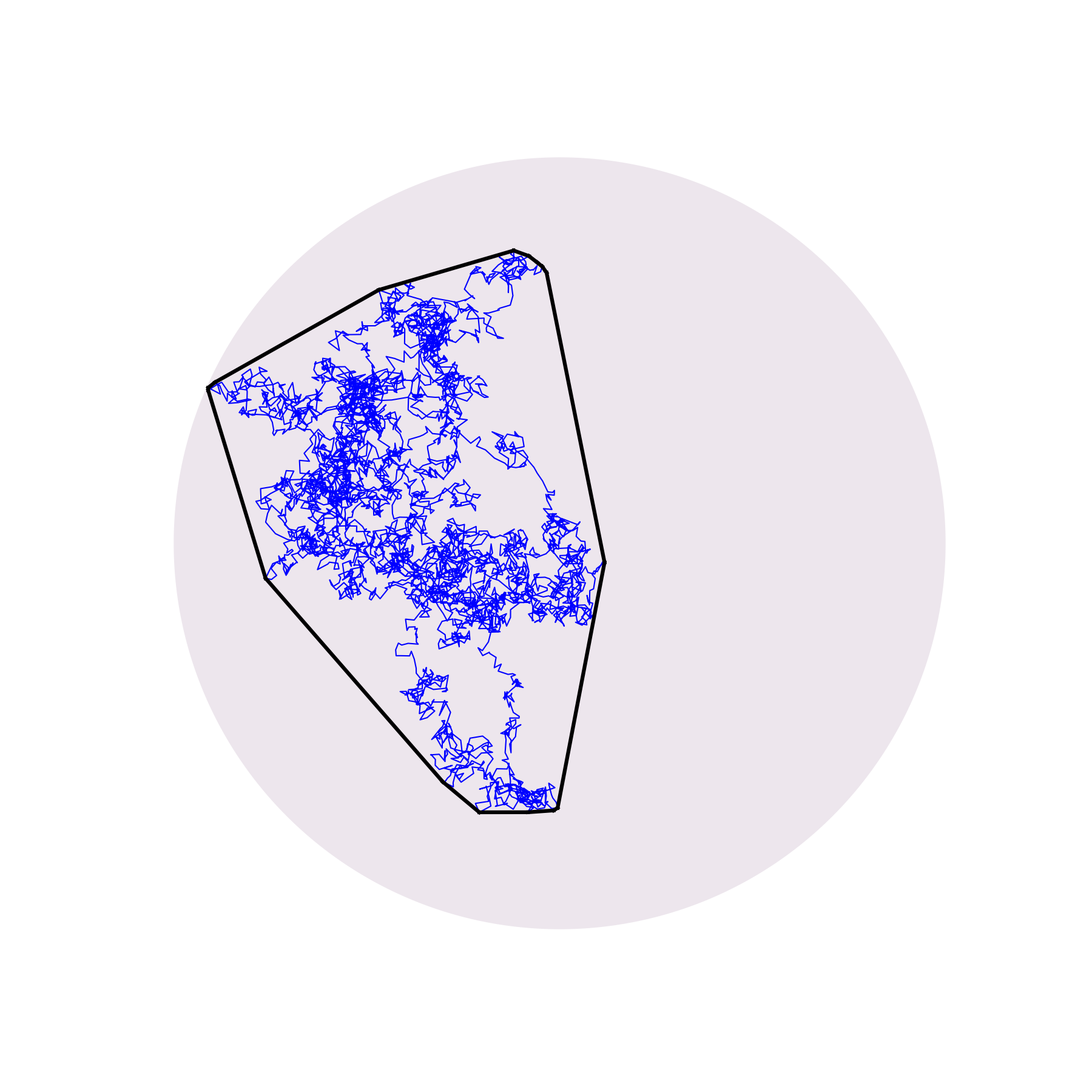}
\caption{Time $t = 1$}
\end{subfigure}
\begin{subfigure}{.23\linewidth}
\includegraphics[width=\textwidth]{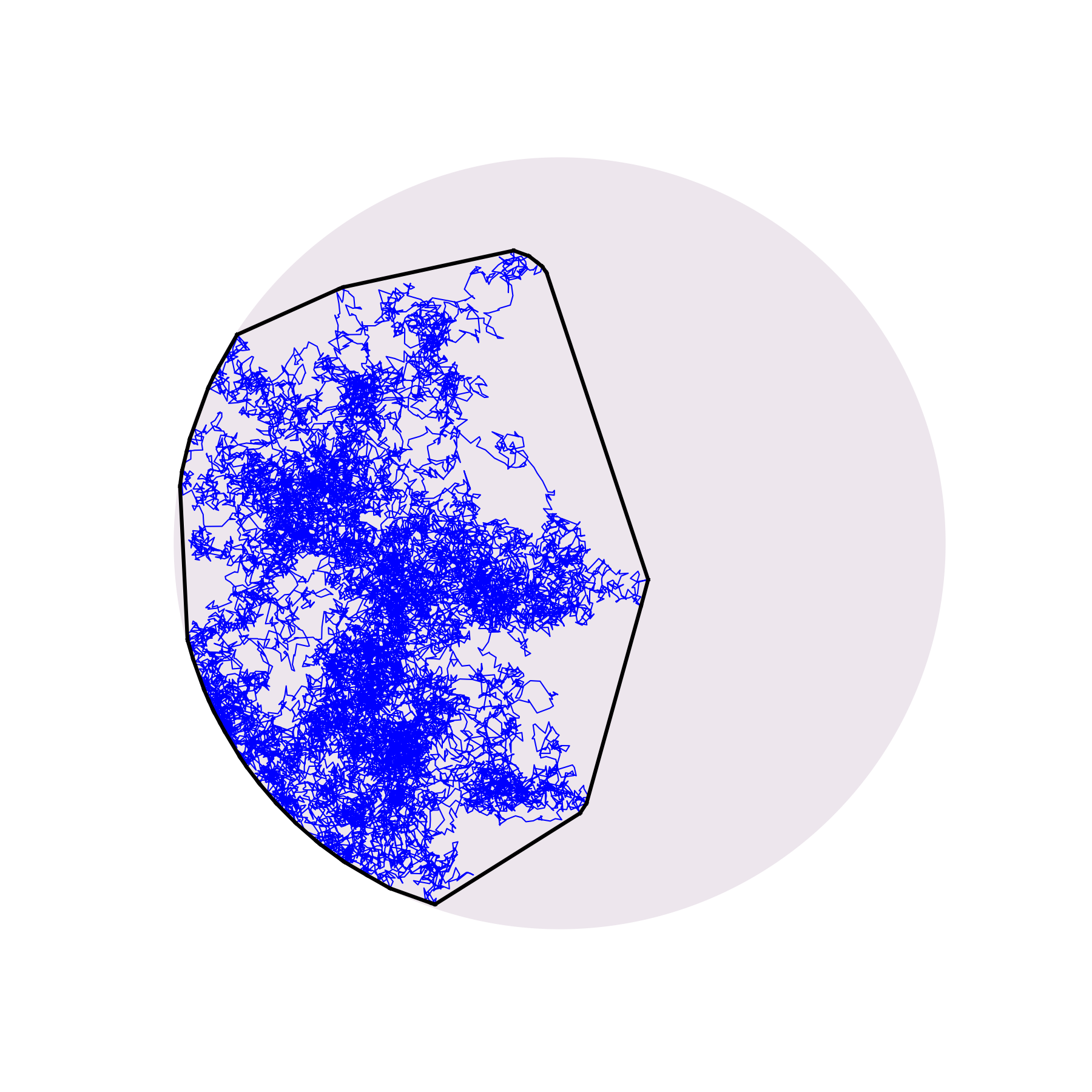}
\caption{Time $t = 4$}
\end{subfigure}
\begin{subfigure}{.23\linewidth}
\includegraphics[width=\textwidth]{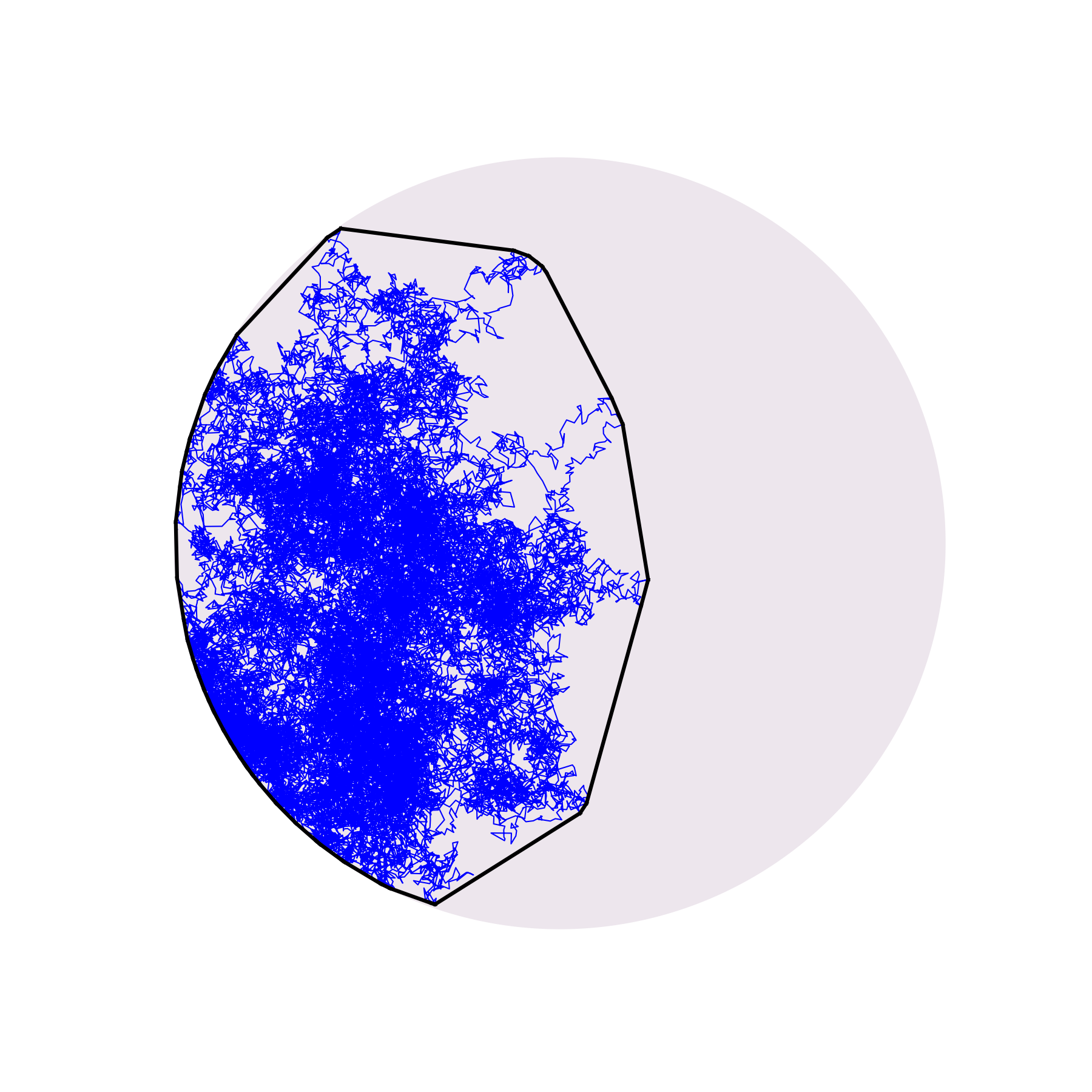}
\caption{Time $t = 8$}
\end{subfigure}
\begin{subfigure}{.23\linewidth}
\includegraphics[width=\textwidth]{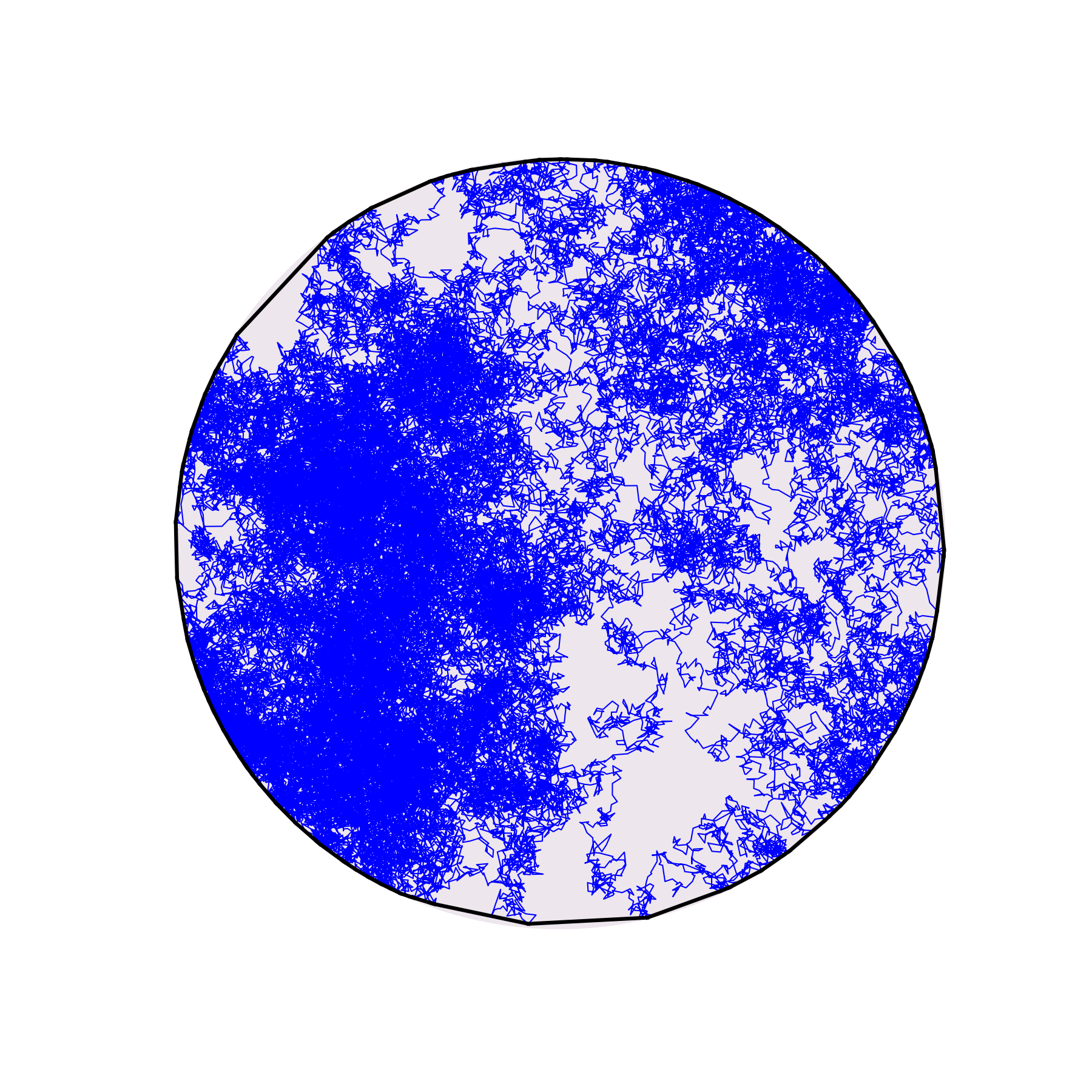}
\caption{Time $t = 16$}
\end{subfigure}
\caption{Convex hull (in black) of the trajectory of the Brownian motion $B^\D$ (in blue) in the unit disk over the time interval $[0,t]$.}
\label{fig:brownianHull}
\end{figure}

Using again the Cauchy formula and the invariance by rotation of the process, De Bruyne et al. \cite{DBBM22} obtain the following exact equality for the average length of the convex hull of $\{B^\D_s, s \leq t\}$:
\begin{equation*}
  \E\Big[\bar{P}_t\Big] = 2 \pi \E\Big[\sup_{s \leq t}(e_0\cdot B^\D_s)\Big] = 2 \pi \int_0^1 \P(\tau_x < t) \dd x,
\end{equation*}
with $\tau_x = \inf\{t > 0: B^\D_t\cdot e_0 > x\}$ being the first entrance time of $B^\D$ inside the spherical cap $\{z \in \D : z .e_0>x\}$. Estimating $\P(\tau_x > t)$ as $x \to 1$ and $t \to \infty$ is related to the well-known \emph{narrow escape problem} \cite{narrowEscapeProblem,Rupprecht2014}. Using the empirical approximation
\begin{equation}
  \label{eqn:narrowEscapeApproximation}
  \P(\tau_x > t) \approx \exp\left( - \frac{t}{-\log(1-x) -\log 2 + \frac{1}{2}} \right) \quad \text{ as $t \to \infty$}
\end{equation}
for all $x$ small enough, justified in \cite[Section 2.3]{DBBM22}, De Bruyne et al. then make the following prediction for the asymptotic behavior of $\E\big[\bar{P}_t\big]$:
\begin{equation}
\label{equi:DBBM22}
  2\pi - \E\big[\bar{P}_t\big] \underset{t \to \infty}{\sim} c \cdot t^{1/4}e^{-2 t^{1/2}}
\end{equation}
for some $c > 0$.

We present in this article a toy-model for the evolution of the convex hull of the Brownian motion in the disk based on time-inhomogeneous fragmentation processes, which present a similar asymptotic behavior as $t \to \infty$. These processes do not belong to the well-studied family of self-similar fragmentation processes whose study was initiated in \cite{BertoinSSF} and, interestingly, we still manage to obtain precise results, both at large and fixed times. 
From the Brownian in the disk point of view, in addition to the asymptotic of the mean of the perimeter, this toy model also allows us to study in depth the distribution of the length of a typical face of the convex hull, as well as the empirical distribution of the whole set of lengths of its faces.

\subsection{An inhomogeneous fragmentation approximation}
\label{sec:intro_frag}

\textbf{Heuristics.} Let us first observe that the asymptotic behavior of $\bar{P}_t$ can heuristically be approached by a fragmentation process. Indeed, let $C_t = \{B^\D_s, s \in [0,t]\} \cap \S^1$ be the set of positions at which the Brownian particle hits the boundary of $\D$. We denote by $(\ell_j(t), j \geq 1)$ the lengths of the intervals in $\S^{1} \setminus C_t$, ranked in the non-increasing order. Writing $P_t$ and $A_t$ for the length of the perimeter and the area of the convex hull of $C_t$ respectively, we observe that
\begin{equation}
\begin{split}
  2 \pi - P_t &= \sum_{i \geq 1} \ell_i(t) - 2 \sin(\ell_i(t)/2) \underset{t \to \infty}{\sim}  \frac{1}{24} \sum_{i \geq 1} \ell_i(t)^3 \label{eqn:perimeter}\\
  \pi - A_t &= \sum_{i \geq 1} \frac{\ell_i(t)}{2} - \sin(\ell_i(t)/2) \underset{t \to \infty}{\sim}  \frac{1}{12} \sum_{i \geq 1} \ell_i(t)^3.
  \end{split}
\end{equation}
We expect $\bar{H}_t$ and $\mathrm{Hull}(C_t)$ to be very close sets as $t \to \infty$, see Figure~\ref{fig:approximation}. Indeed, one immediately remark that
$\mathrm{Hull}(C_t) \subset{\bar{H}_t}$ and $P_t \leq \bar P_t$.
Using again Cauchy's formula and invariance by rotation, we note that
\begin{equation*}
  \E\big[P_t\big] = 2 \pi \E\Big[\max_{z \in C_t} z \cdot e_0\Big] = 2 \pi \int_0^1 \P(\tau^{\mathrm{ne}}_x < t) \dd x,
\end{equation*}
where $ \tau^\mathrm{ne}_x = \inf\{t > 0 : B^\D_t \in \S^1 \text{ and } B^\D_t \cdot e_0 > x \}$ is the stopping time associated to the narrow escape problem from a target of length $2 \arccos(x) \sim 2 \sqrt{2(1-x)}$ as $x \to 1$. Using that the boundary of $\D$ is locally well-approximated by its cord, it is argued in \cite[Section 2.1]{DBBM22} that for $x$ small enough,
\[
  \P(\tau^\mathrm{ne}_x > t) \approx \P(\tau_x > t) \text{ as $t \to \infty$}.
\]
Numerical simulations supporting that claim are given in \cite[Appendix A]{DBBM22}. As a result, we expect that $\lim_{t \to \infty} \frac{\E[\bar{P}_t] - \E[P_t]}{2\pi - \E[\bar{P}_t]} = 0$. Simulations of our own, drawn in Figure~\ref{fig:approximation}, give credit to this approximation.

\begin{figure}[ht]
\centering
\begin{subfigure}{.3\linewidth}
\includegraphics[width=\textwidth]{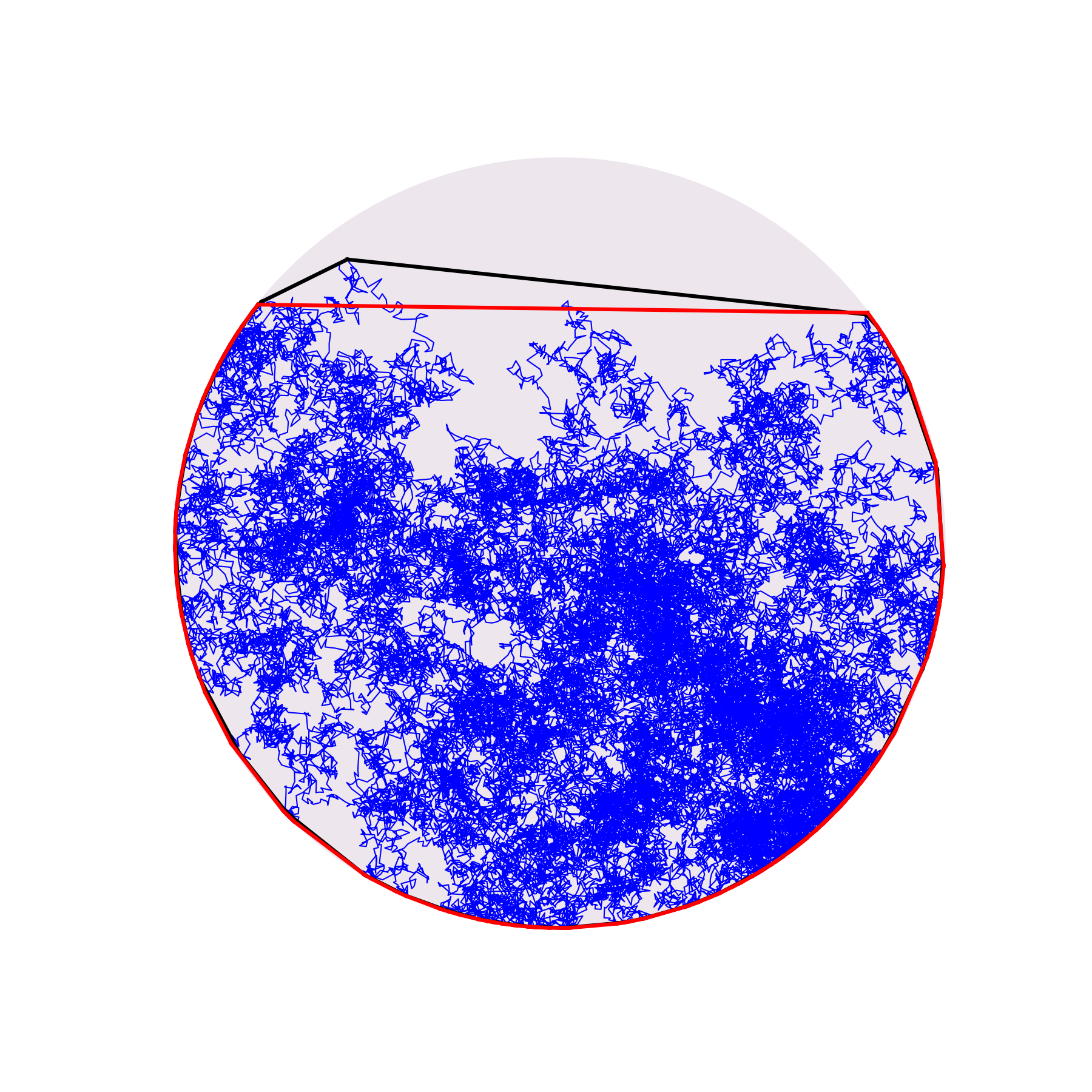}
\end{subfigure}
\hspace{1.5cm}
\begin{subfigure}{.45\linewidth}
\includegraphics[width=\textwidth]{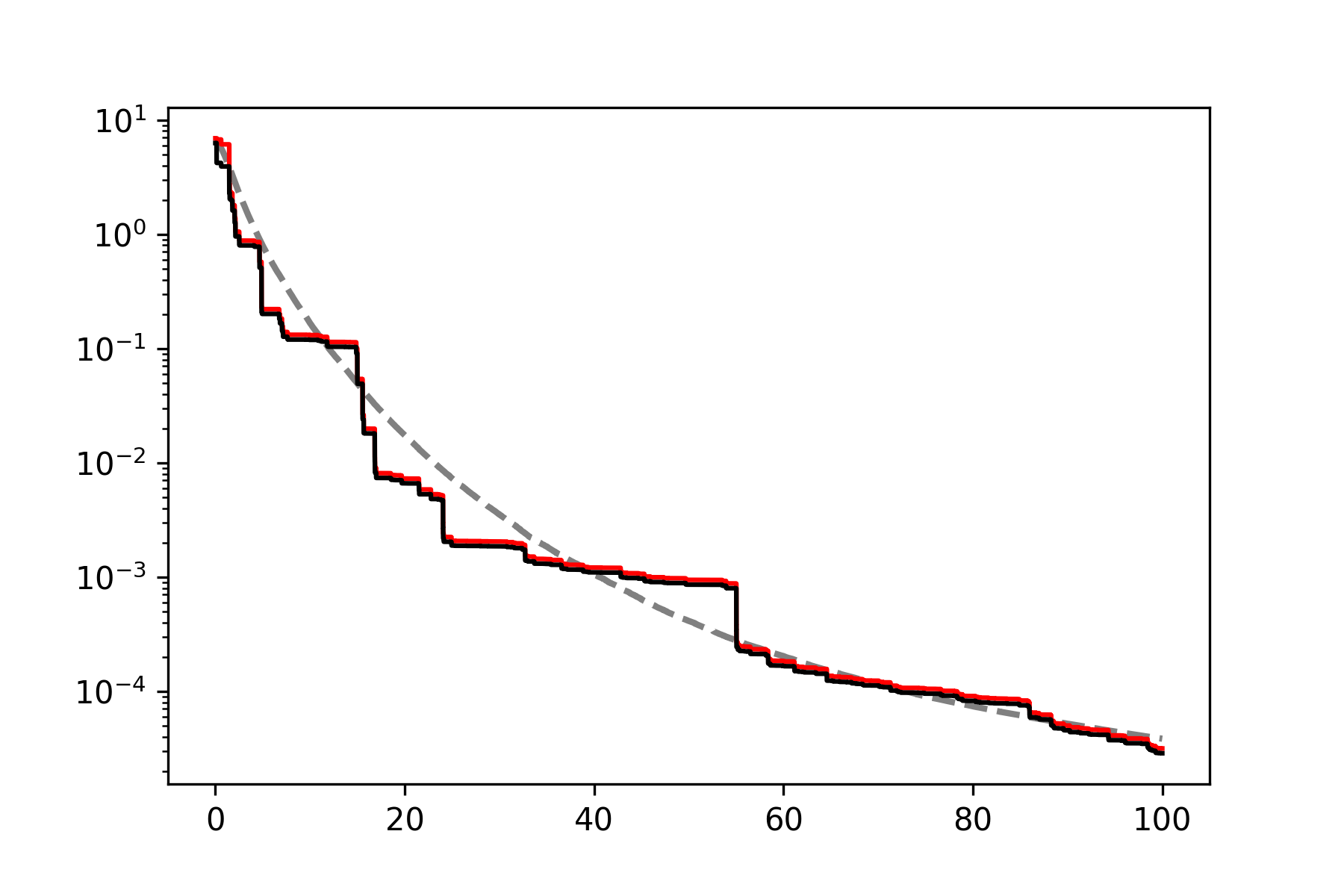}
\end{subfigure}
\caption{On the left-hand side: the convex hulls $\bar{H}_t$ (in black) and $\mathrm{Hull}(C_t)$ (in red) of the trajectory of the Brownian motion $B^\D$ and of the set of points at which the trajectory hits the boundary of $\D$ respectively. As expected, the boundary of the two hulls become indistinguishable around points at which the Brownian curve hits the boundary of $\D$. \hspace{0.02cm} On the right-hand side: the evolution over time of $2 \pi - \bar{P}_t$ (in black) and $2\pi - P_t$ (in red). As expected, after a short transitory period, the two curves become indistinguishable, even on a log scale. We additionally drew in grey an approximation of $2\pi - \E[P_t]$ computed using Monte-Carlo methods, which shows a decay of order $e^{-c\sqrt{t}(1+o(1))}$.}
\label{fig:approximation}
\end{figure}

We now observe that $(\ell_i(t), i \geq 1)_{t \geq 0}$ behaves as a generalized fragmentation process. Indeed, in this process, at random times intervals get cut by the trajectory of $B^\D$. The time at which a fragment of length $\ell$ gets split has the law of $T_\ell$ the first hitting time by $B^\D$ of a target of width $\ell$ on $\S^1$. Obviously, the law of this random variable depends heavily on the starting position of $B^\D$. However, we have $\max_i \ell_i(t) \to 0$ as $t \to \infty$, using the recurrence of $B^\D$. We therefore argue that as $t \to \infty$, the splitting times of different intervals should become essentially independent of one another, and the dependency on the starting position should become less relevant. To be more precise, assuming that $B^\D$ starts from the uniform distribution in $\D$, we recall that (see e.g. \cite{narrowEscapeProblem,Rupprecht2014})
\[
  \E\left[T_\ell\right] \underset{\ell \to 0}{\sim} - \log \ell,
\]
and the narrow escape approximation  \eqref{eqn:narrowEscapeApproximation} consists in considering that as $\ell \to 0$, $T_\ell$ approaches an exponential distribution with parameter $-(\log \ell)^{-1}$. Additionally, the scaling properties of the Brownian motion implies that once $B^\D$ hits this interval, it will split it into smaller fragments in a self-similar fashion, similarly to a $2$-dimensional Brownian motion on the half-plane hitting a domain of length $\ell$ on its boundary.

In order to approach the asymptotic behavior of the process $(\ell_i(t)/2\pi, i \geq 1)_{t \geq 0}$, we therefore propose to study inhomogeneous fragmentation processes in which distinct particles evolve independently and split in a similar fashion, with the constraint that  the rate of splitting of a particle with mass $m \in (0,1)$ is proportional to $|\log(m)|^{-1}$. We now introduce such processes rigorously.

\bigskip

\noindent \textbf{Fragmentation processes $(\nu,c,|\log|^{-1})$.} We place ourselves within the framework of the theory of fragmentation processes developed by Bertoin in the early 2000s \cite{BertoinHom, BertoinSSF, BertoinAB03}.
Let $\nu$ be a (possibly infinite) Radon measure on the set of (possibly improper) partition masses of the unit interval
\[
  \mathcal{S} = \left\{ \mathbf{s} = \left(s_i, i \geq 1\right) \in [0,1]^\N  : s_1 \geq s_2 \geq \cdots, \sum_{i \geq 1} s_i \leq 1  \right\}
\]
such that
\begin{equation}
  \label{eqn:integrabilityAssumption}
  \int_{\mathcal{S} }(1- s_1) \nu(\dd \mathbf{s}) < \infty.
\end{equation}

Before introducing \emph{inhomogeneous fragmentation processes}, where the rates of splitting depend on the masses of the particles, we first roughly recall Bertoin's construction of  \emph{homogeneous fragmentation processes} where particles of mass $m$ get fragmented into particles of mass $ms_1,ms_2,\ldots$ at the common rate $\nu(\dd s)$, whatever the mass $m$ is. Such a process is defined as a random family of pairwise disjoint open subintervals $(I^\h_i(t), i \in \N)$ of $(0,1)$, where here $t$ denotes the time. Let $N$ be a Poisson point process on $\mathcal{S} \times \N \times \R_+$ with intensity $\nu(\dd \mathbf{s}) \otimes\#(\dd k)  \otimes\dd t$ and start at time $0$ with $I^\h_1(0) = (0,1)$. Then for each atom $(\mathbf{s},k,t)$ of $N$, the interval $I^\h_{k}(t-)$ is fragmented at time $t$ into subintervals of length $|I^\h_k(t-)|s_1,|I^\h_k(t-)|s_2,\cdots$. The intervals $(I^\h_i(t), j \geq 1)$ are then relabelled in decreasing order of their lengths.  The well-definition of this construction is guaranteed by assumption \eqref{eqn:integrabilityAssumption}. Additionally, the fragments can melt according to a parameter $c \geq 0$. We refer to \cite{BertoinHom, BertoinSSF} for details and call here the process $(|I^\h_i(t)|, i \geq 1)_{t \geq 0}$ \emph{a homogeneous fragmentation process with dislocation measure $\nu$ and erosion coefficient $c$}.

For each $x \in [0,1]$, we denote by $I^\h_{t,x}$ the unique interval in $(I^\h_j(t), j \geq 1)$ that contains $x$ if it exists and set $I^\h_{t,x}:=\emptyset$ otherwise. Let $\tau:(0,1] \to (0,\infty)$ be a continuous function. An \emph{inhomogeneous fragmentation process} in which particles of mass $m$ split at rate $\tau(m) \nu(\mathrm ds)$ and melt continuously at rate $c$ can be constructed from the above homogeneous fragmentation by using a Lamperti-type time change. More precisely, for all $x \in (0,1)$ and all $t\geq 0$, we set
\begin{equation}
\label{time_change_tau}
  T^x_t = \inf\left\{u > 0 : \int_0^u \frac{1}{\tau(I^\h_{r,x})}\dd r >t\right\}.
\end{equation}
We then consider the family\footnote{Observe that if $y \in I^\h_{T^x_t}$, then $T^y_t = T^x_t$, so this family indeed consists in pairwise disjoint subsets of $(0,1)$.} $\cup_{x \in (0,1)} \{I^\h_{T^x_t,x}\}$, and denote by $(I_i(t), i \geq 1)$ this family of intervals ranked in the decreasing order of their lengths. The inhomogeneous fragmentation process with parameters $(\nu,c,\tau)$ is then given by $(|I_i(t)|, i \geq 1)_{t \geq 0}$. Again we refer to \cite{BertoinSSF} for details, in particular when the functions $\tau$ are power functions, which then lead to \emph{self-similar fragmentation processes}, a family of processes that have been intensely studied. See also \cite{H03} for an extension to more general functions $\tau$. Here we apply this construction to $\tau(x)=|\log(x)|^{-1}$. We note that then $\tau(1)=\infty$, which takes us a bit out of the previous framework, but that does not prevent us from doing the construction. It simply implies that $I_1(0)=I^{\h}_1(T_1), i\geq 1$ when $c=0$, $\nu$ is finite and $T_1$ denotes the first jump time of $I^\h$, and that $I_1(0)=(0,1)$ otherwise.

From now on, we focus on such inhomogeneous fragmentation processes with parameters $$(\nu, c,|\log |^{-1}).$$ For $t \geq 0$ and $i \geq 1$, we write $F_i(t)$ for the size at time $t$ of the $i$th largest element of the fragmentation. We wish to study the properties of such a process, starting with the law of its typical fragment, defined as the trajectory of the process $t \mapsto |I_{T^{x}_t,U}|$, where $U$ is uniformly sampled in $(0,1)$. Following \cite{BertoinSSF}, \cite{H03}, there is the following classical representation of this trajectory.
\begin{proposition} [\cite{BertoinSSF}, \cite{H03}]
\label{prop:taggedFragment}
Let $\xi^{(\nu,c)}$ be a subordinator with Laplace exponent
\[
\phi_{(\nu,c)}(q)= - \log \E\Big[e^{-q \xi^{(\nu,c)}_1}\Big] := cq+\int_{\mathcal{S}} \sum_{i\geq 1}s_i(1-s_i^{q})\nu(\dd \mathbf{s}), \quad q\geq 0.
\]
Let $\rho(t) := \inf \left\{ u \geq 0 : \int_0^u \xi^{(\nu,c)}_r \dd r \geq t \right\}$, $t\geq 0$, with the convention $\inf\{\emptyset\}=\infty$. Then for any measurable bounded function $f:[0,1] \rightarrow \mathbb R$, one has
\[
  \E\left[ f\Big(e^{-\xi^{(\nu,c)}_{\rho(t)}}\Big) \right] =  \E\Bigg[\sum_{i \geq 1} F_i(t) f\left(F_i(t)\right) \Bigg].
\]
\end{proposition}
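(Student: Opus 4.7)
The plan is to combine Bertoin's classical tagged-fragment representation for \emph{homogeneous} fragmentations with the Lamperti-type time change \eqref{time_change_tau} used to build the inhomogeneous process, so that the only thing to compute is the law of the tagged fragment at the random time $T^U_t$.

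I would start by rewriting the right-hand side through a size-biased pick. Since the intervals $(I_i(t), i \geq 1)$ are pairwise disjoint in $(0,1)$ with $|I_i(t)| = F_i(t)$, a uniform variable $U$ on $(0,1)$ independent of the fragmentation satisfies
\[
  \E\Bigg[\sum_{i \geq 1} F_i(t) f(F_i(t))\Bigg] = \E\big[ f(|I_{t,U}|) \indset{|I_{t,U}| > 0}\big],
\]
where $I_{t,U}$ is the unique element of $(I_i(t))_{i \geq 1}$ containing $U$, if it exists. By the construction of the inhomogeneous process, $I_{t,U} = I^\h_{T^U_t, U}$, so the task reduces to identifying the law of $|I^\h_{T^U_t, U}|$ through the time change.

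Next, I would invoke Bertoin's theorem \cite{BertoinHom, BertoinSSF}: the process $(-\log|I^\h_{r,U}|)_{r \geq 0}$ has the law of the subordinator $\xi^{(\nu,c)}$ with Laplace exponent $\phi_{(\nu,c)}$. Since $\tau(m)^{-1} = -\log m$ on $(0,1)$, the definition \eqref{time_change_tau} along the tagged fragment reads
\[
  T^U_t = \inf\Big\{u > 0 : \int_0^u \big(-\log|I^\h_{r,U}|\big) \dd r > t \Big\},
\]
and the equality in law at the level of trajectories transfers jointly to the stopping time and to the value at that time:
\[
  \big( T^U_t, -\log|I^\h_{T^U_t, U}|\big) \overset{(d)}{=} \big(\rho(t), \xi^{(\nu,c)}_{\rho(t)}\big).
\]

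Finally, $\rho(t) < \infty$ almost surely (as $\xi^{(\nu,c)}_r \to \infty$ outside the trivial case $\nu \equiv 0$, $c=0$), so $e^{-\xi^{(\nu,c)}_{\rho(t)}} > 0$ a.s. and the indicator in the first display is automatic; combining the three steps yields the claimed identity. The only step really doing work is the second one: Bertoin's identity must be used as a process-level equality so that it can be composed with the random time $\rho(t)$. The boundary effect $\tau(1)=\infty$ flagged in the excerpt only shifts the time change by the first dislocation time when $\nu$ is finite and causes no issue.
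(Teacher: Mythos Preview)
Your argument is correct and is precisely the standard route: rewrite the right-hand side as a size-biased expectation via a uniform tag $U$, use Bertoin's process-level identity $(-\log|I^\h_{r,U}|)_{r\ge 0}\overset{(d)}{=}\xi^{(\nu,c)}$, and carry it through the path-measurable time change $T^U_t$, which becomes $\rho(t)$ on the subordinator side. Note that the paper does not supply its own proof of this proposition; it is quoted from \cite{BertoinSSF,H03}, and what you wrote is exactly the argument found there (specialized to $\tau=|\log|^{-1}$), so there is nothing to compare against beyond confirming that your sketch matches the cited proofs.
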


In other words, the law of a typical fragment at time $t$, chosen proportionally to its weight, is distributed as $e^{-\xi^{(\nu,c)}_{\rho(t)}}$. Recalling the estimate \eqref{eqn:perimeter}, we observe that for all $q \geq 0$
\begin{equation}
\label{eq:Laplace_sum}
  \E\Bigg[  \sum_{i \geq 1} F_i(t)^{q+1}\Bigg] = \E\left[ e^{-q \xi^{(\nu,c)}_{\rho(t)}} \right].
\end{equation}
One of our objectives is therefore to study the Laplace transforms of the time-changed subordinator $\xi^{(\nu,c)}_{\rho(t)}$, at any time $t$. We will undertake this study for all subordinators, and not necessarily those having a Laplace exponent of the form $\phi_{(\nu,c)}$.

\subsection{Main results}
\label{sec:def_fragmentations}

Motivated by the previous discussion, we consider now a generic subordinator $\xi$, with Laplace exponent
\[
 \phi(q)=-\log \E\left[e^{-q \xi_1}\right] = \kappa + c q + \int_0^\infty (1 - e^{-qx}) \pi(\dd x)
\]
where $\kappa \geq 0$, $c \geq 0$ and $\pi$ is a measure on $(0,\infty)$ such that $\int_0^{\infty} (1 \wedge x) \pi(\dd x) < \infty$. We refer to $\pi$ as the L\'evy measure of $\xi$, $c$ its drift and $\kappa$ its death rate. We define then the time change $\rho$ by
\[
  \rho(t) := \inf \left\{ r \geq 0 : \int_0^r \xi_u \dd u \geq t  \right\}, \quad t \geq 0,
\]
with the convention $\inf{\emptyset}=\infty$. Our first main result gives the following exact expression for the Laplace transform of $\xi_{\rho(t)}$, for any $t\geq 0$.

\medskip

\begin{theorem}
\label{thm:lapTransform}
Let $\Phi(q)=\int_0^q \phi(s) \dd s$, $q \geq 0$ and $\Phi^{-1}: [0,\infty) \mapsto  [0,\infty)$ be the inverse of $\Phi$. Then the function $\Phi^{-1}$ is the Laplace exponent of a subordinator, with a Lévy measure that we denote $L$, and for all $q > 0$ and $t \geq 0$, we have
\begin{equation}
  \label{eq:Laplace_intro}
  \mathbb E\left[e^{-q\xi_{\rho(t)}}\right]   =    \phi(q)\int_0^{\infty} e^{-\Phi(q)x-\frac{t}{x}} xL(\mathrm dx).
\end{equation}
\end{theorem}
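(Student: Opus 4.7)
The plan is to verify the identity by computing the Laplace transform in $t$ of both sides and matching them, using the classical exponential formula for subordinators.

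A preliminary step is to show that $\Phi^{-1}$ is a Bernstein function. Since $\phi$ is Bernstein with $\phi>0$ on $(0,\infty)$, $\Phi$ is a strictly convex increasing bijection of $[0,\infty)$ with $\Phi(0)=0$, so $\Phi^{-1}$ is well defined, concave, vanishes at $0$, and satisfies $(\Phi^{-1})'(z) = 1/\phi(\Phi^{-1}(z))$. To conclude that $\Phi^{-1}$ is Bernstein, and thereby obtain the L\'evy measure $L$ via $(\Phi^{-1})'(z) = \int_0^\infty x e^{-zx}L(\dd x)$, one needs the derivative to be completely monotone. I expect this to be the main obstacle of the proof; I would handle it either by invoking a known inversion result for Bernstein functions (cf.\ Schilling--Song--Vondracek) or by a direct verification.

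Now fix $q, \lambda > 0$. By Tonelli, the Laplace transform in $t$ of the right-hand side of \eqref{eq:Laplace_intro} reads $\phi(q)\int_0^\infty \frac{x^2 e^{-\Phi(q)x}}{1+\lambda x}L(\dd x)$. For the left-hand side, I would change variables via $t = A_u := \int_0^u \xi_s \dd s$, which is a.s.\ continuous and strictly increasing on $[0,\zeta)$, so $\dd t = \xi_u \dd u$. Combining the exponential formula
\[
  \E\big[e^{-\lambda A_u - q\xi_u}\big] = \exp\Big(-\int_0^u \phi(\lambda s + q)\dd s\Big) = \exp\Big(-\tfrac{\Phi(\lambda u + q) - \Phi(q)}{\lambda}\Big)
\]
with $\E[\xi_u e^{-\lambda A_u - q\xi_u}] = -\partial_q\E[e^{-\lambda A_u - q\xi_u}]$ and substituting $v = \lambda u + q$, the LHS Laplace transform reduces to $\frac{1}{\lambda^2}\int_q^\infty (\phi(v)-\phi(q)) e^{-(\Phi(v)-\Phi(q))/\lambda}\dd v$.

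Finally, splitting this last integral in two and applying the change of variable $w = \Phi(v) - \Phi(q)$, with $\dd v = (\Phi^{-1})'(w+\Phi(q))\dd w$, the piece containing $\phi(v)\dd v = \dd w$ integrates trivially to $\lambda$ (contributing $1/\lambda$), while the piece with $\phi(q)$, after inserting the L\'evy--Khintchine representation of $(\Phi^{-1})'$ and applying Tonelli, yields $-\frac{\phi(q)}{\lambda}\int_0^\infty \frac{x e^{-\Phi(q)x}}{1+\lambda x}L(\dd x)$. Equating the LHS and RHS Laplace transforms then reduces, after collecting fractions, to the identity $\phi(q)\int_0^\infty x e^{-\Phi(q)x}L(\dd x) = \phi(q)(\Phi^{-1})'(\Phi(q)) = 1$. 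Uniqueness of the Laplace transform and continuity in $t$ of both sides conclude the proof.
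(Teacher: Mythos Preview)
Your approach is correct and genuinely different from the paper's. The paper introduces $f(t,q)=(\Phi^{-1})'(q)\,\E[e^{-\Phi^{-1}(q)\xi_{\rho(t)}}]$, shows via the strong Markov property that $f(t,q)=\int_t^\infty \E[e^{-\Phi^{-1}(q)\xi_{\rho(s)}}/\xi_{\rho(s)}]\,ds$, derives the PDE $\partial_q\partial_t f=f$, and then solves it by Laplace transform in $t$. You instead go straight to the Laplace transform via the exponential formula $\E[e^{-\lambda A_u-q\xi_u}]=\exp\bigl(-(\Phi(\lambda u+q)-\Phi(q))/\lambda\bigr)$, which follows from writing $\lambda A_u+q\xi_u=\int_0^u[\lambda(u-s)+q]\,d\xi_s$ and Campbell's theorem. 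This is shorter and more elementary; the paper's PDE step is bypassed entirely. The paper's route, on the other hand, makes the connection with the spectrally negative L\'evy process $X$ with exponent $\Phi$ explicit, which it needs later for the asymptotic analysis of $L$.

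Two small gaps in your write-up. First, the subordinator with exponent $\Phi^{-1}$ may have a positive drift $d=\mathbbm 1_{\{c=0\}}/(\kappa+\pi(0,\infty))$, so in general $(\Phi^{-1})'(z)=d+\int_0^\infty x e^{-zx}L(dx)$, not just the integral. This makes your final line ``$\phi(q)\int_0^\infty xe^{-\Phi(q)x}L(dx)=1$'' false when $d>0$; the correct identity is $\phi(q)\bigl(d+\int_0^\infty xe^{-\Phi(q)x}L(dx)\bigr)=1$. Fortunately, the same drift contributes an extra $-\phi(q)d/\lambda$ to your LHS computation and the two corrections cancel, so the matching still works once you track $d$. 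Second, for $\Phi^{-1}$ being Bernstein you can simply cite the standard fact that $\Phi$ is the Laplace exponent of a spectrally negative L\'evy process, whence $\Phi^{-1}$ is the exponent of its first-passage subordinator (Bertoin, \emph{L\'evy Processes}, Ch.~VII); this is not an obstacle. Finally, ``continuity in $t$'' should be weakened to right-continuity on the LHS (since $t\mapsto\xi_{\rho(t)}$ is nondecreasing), which is all you need to upgrade a.e.\ equality of Laplace inverses to equality for every $t\ge 0$.
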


This result relies on a surprising connection between the law of $\xi_{\rho(t)}$ and a spectrally negative Lévy process $X$ with Laplace exponent $\Phi$. This will be discussed in Section~\ref{sec:Laplace}. Although the L\'evy measure $L$ is generally quite abstract, this result allows us to obtain precise estimates on $ \mathbb E\big[e^{-q\xi_{\rho(t)}} \big]$. We first obtain an upper bound for $ \mathbb E\big[e^{-q\xi_{\rho(t)}} \big]$ which is not sharp but is valid for any subordinator $\xi$:

\begin{proposition}
\label{prop:unifBound}
For all $q > 0$, there exists $a(q) \in (0,\infty)$  such that for all $t \geq 0$
\[
\E\left[ e^{-q \xi_{\rho(t)}} \right]   \leq   a(q) \cdot  \left(1+t^{1/8}\right) e^{-2 \sqrt{\Phi(q)t}}.
\]
\end{proposition}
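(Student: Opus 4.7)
The plan is to start from the exact formula given by Theorem~\ref{thm:lapTransform},
\[
\E\left[e^{-q\xi_{\rho(t)}}\right] = \phi(q) \int_0^\infty e^{-\Phi(q) x - t/x}\, x\, L(dx),
\]
and perform a Laplace-type analysis around the minimum of the exponent. The function $x \mapsto \Phi(q)x + t/x$ is convex with unique minimum at $x^\ast := \sqrt{t/\Phi(q)}$ of value $2\sqrt{\Phi(q)t}$, and the identity
\[
e^{-\Phi(q)x - t/x} = e^{-2\sqrt{\Phi(q)t}}\, e^{-(\sqrt{\Phi(q)x} - \sqrt{t/x})^2}
\]
isolates the leading exponential factor, reducing the problem to bounding the residual integral $\int_0^\infty e^{-(\sqrt{\Phi(q)x} - \sqrt{t/x})^2}\, x\, L(dx)$ by $C(q)(1 + t^{1/8})$.

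I would split the integration range into three pieces delimited by $x^\ast/4$ and $4x^\ast$. On $(0, x^\ast/4)$ the inequality $t/x \geq 4\sqrt{\Phi(q)t}$ gives $e^{-t/x} \leq e^{-4\sqrt{\Phi(q)t}}$; combined with the identity $\int_0^\infty e^{-\Phi(q)x}\, x\, L(dx) = (\Phi^{-1})'(\Phi(q)) - d$, where $d \geq 0$ is the drift of $\Phi^{-1}$, this piece contributes at most $C_1(q) e^{-4\sqrt{\Phi(q)t}}$. On $(4x^\ast, \infty)$, factoring $e^{-\Phi(q)x} = e^{-\Phi(q)x/2}\cdot e^{-\Phi(q)x/2}$ and using $\Phi(q)x/2 \geq 2\sqrt{\Phi(q)t}$ pointwise yields a bound of order $C_3(q) e^{-2\sqrt{\Phi(q)t}}$.

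The main work is then in the bulk $[x^\ast/4, 4x^\ast]$. Using $e^{-\Phi(q)x - t/x} \leq e^{-2\sqrt{\Phi(q)t}}$ pointwise, it suffices to show $\int_{x^\ast/4}^{4x^\ast} x\, L(dx) \leq C(q)(1 + t^{1/8})$. To control $L$ on intervals bounded away from $0$, I would use the general bound $L((a, \infty)) \leq (1 - e^{-1})^{-1} \Phi^{-1}(1/a)$, which holds because $\Phi^{-1}$ is the Laplace exponent of a subordinator with L\'evy measure $L$. Applying this directly yields only $x^\ast \Phi^{-1}(1/x^\ast) = O(t^{1/4})$ in the worst case (attained e.g.\ when $\kappa = 0$, $c > 0$, so that $\Phi^{-1}(r) \sim \sqrt{2r/c}$). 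To reach the sharper $O(t^{1/8})$ bound, I would further split the bulk at the scale $|x - x^\ast| \sim x^\ast t^{-1/8}$: outside this inner window the Gaussian-type factor $e^{-(\sqrt{\Phi(q)x} - \sqrt{t/x})^2}$ is bounded by $e^{-c(q) t^{1/4}}$, overriding any polynomial growth; inside it, the relative width $t^{-1/8}$ reduces the crude $t^{1/4}$ estimate by a factor of $t^{-1/8}$. An alternative would be a Cauchy--Schwarz step on the bulk, trading the $t^{1/4}$ bound on $\int x\, L(dx)$ for its square root.

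The main obstacle will be to make this refinement rigorous for every subordinator: the factor $t^{-1/8}$ must come out of the identity $L([a,b]) = \bar L(a) - \bar L(b)$ via careful use of the concavity and monotonicity of $\Phi^{-1}$, rather than from the naive estimate $L([a,b]) \leq \bar L(a)$, since this is the only quantitative information on $L$ that one has in full generality. Summing the three regional estimates and multiplying by $\phi(q)$ then delivers the announced bound with a suitable constant $a(q) > 0$.
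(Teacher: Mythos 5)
Your reduction via Theorem~\ref{thm:lapTransform}, the extraction of the factor $e^{-2\sqrt{\Phi(q)t}}$, and the two tail regions $(0,x^\ast/4)$ and $(4x^\ast,\infty)$ are fine; the gap is in the bulk, and it is precisely where the paper needs its one non-obvious ingredient. You state that the only quantitative information on $L$ available in full generality is the tail bound $\bar L(a)\lesssim \Phi^{-1}(1/a)$ together with monotonicity/concavity of $\Phi^{-1}$, and you then claim that the inner window of relative width $t^{-1/8}$ carries a fraction $t^{-1/8}$ of the crude $t^{1/4}$ mass. That step presumes that $xL(\dd x)$ spreads its bulk mass over $[x^\ast/4,4x^\ast]$ proportionally to length; this does not follow from your toolkit and is not available for a general subordinator: nothing you quote prevents $xL(\dd x)$ from lumping a mass of order $t^{1/4}$ inside your window. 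Such ``local uniformity'' statements are of strong-renewal-theorem type and in this paper they are only obtained under the extra hypotheses \eqref{hyp:rv} and \eqref{hyp:tech} via Doney--Rivero, which Proposition~\ref{prop:unifBound} does not assume. The missing idea is Lemma~\ref{lem:renew}: because $(\Phi^{-1})'=1/(\phi\circ\Phi^{-1})$, the measure $V(\dd x)=xL(\dd x)$ is, up to an atom at $0$, the renewal measure of the subordinator $\sigma\circ\xi$, hence $\bar V$ is subadditive up to an additive constant; consequently the $V$-mass of \emph{any} interval of length $\ell$, wherever it sits, is at most $\bar V(\ell)+\mathrm{const}\lesssim 1+\sqrt{\ell}$ (using $\Phi^{-1}(r)\leq c_1\sqrt r$ near $0$, which follows from concavity of $\phi$ as you implicitly use).

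Even granting this subadditivity, your single cutoff at relative scale $t^{-1/8}$ is quantitatively insufficient: that window has width $\asymp_q t^{3/8}$, so the location-independent increment bound yields only $O(t^{3/16})$, not $O(t^{1/8})$; and no single cutoff can work, because the region where the Gaussian factor is of order one already has width $\asymp t^{1/4}$, while just outside it the factor is only $e^{-O(1)}$ and cannot beat the crude $t^{1/4}$. One needs a multiscale argument in the Gaussian level, which is exactly what the paper's Lemma~\ref{eq:dev} encodes: writing $e^{-v^2}=\int_v^\infty 2ue^{-u^2}\,\dd u$ and applying Fubini, the level-$u$ superlevel set is the interval $[a_{u,t,q}-b_{u,t,q},a_{u,t,q}+b_{u,t,q}]$ with $2b_{u,t,q}\leq\big(u^2+2u(\Phi(q)t)^{1/4}\big)/\Phi(q)$, whose $V$-mass is $\lesssim_q 1+u+\sqrt{u}\,t^{1/8}$ by subadditivity, and integration against $2ue^{-u^2}\dd u$ gives the stated $1+t^{1/8}$ (a dyadic decomposition $|x-x^\ast|\asymp 2^k t^{1/4}$ would serve the same purpose). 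Finally, the Cauchy--Schwarz variant you mention is circular: applied with respect to $xL(\dd x)$ it returns the $t^{1/4}$ bound, not its square root.
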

In particular, this shows with the relation (\ref{eq:Laplace_sum}) and the estimate \eqref{eqn:perimeter},  that our toy-model can capture the exponential decay rate found in \eqref{equi:DBBM22}, but the prefactor $t^{1/4}$ cannot be recovered from such a simple model. We indicate in the final Section~\ref{sec:discussions} some possible causes for this discrepancy.

\medskip

\begin{remark}
Observe that when $\kappa>0$, $\xi_{\rho(t)}$ may be infinite. Letting $q \downarrow 0$ in (\ref{eq:Laplace_intro}), we see that
\[
\mathbb P\big(\xi_{\rho(t)}<\infty\big)=\kappa \int_0^{\infty}e^{-\frac{t}{x}} xL(\mathrm dx), \quad \text{ for all } t \geq 0
\]
(we will see further that the measure $xL(\mathrm dx)$ is finite if and only if $\kappa>0$).
\end{remark}

\medskip

Under some more precise regularity conditions, we can compute the asymptotic behavior of the Laplace transform of $\xi_{\rho(t)}$ as $t \to \infty$. We first assume that
\begin{equation}
  \label{hyp:rv}
  \tag{$\mathbf {H_{\gamma}}$}
  \phi \text{ is regularly varying at 0 with index }\gamma \in (0,1],
\end{equation}
i.e. that the subordinator $\xi$ belongs to the domain of attraction of a stable random variable with index $\gamma$. We recall that a function $f$ is called regularly varying at $0$ with index $\alpha \in \mathbb R$ if for all $\lambda > 0$,
\[
  \lim_{x \to 0} \frac{f(\lambda x)}{f(x)} = \lambda^\alpha,
\]
and refer to the book of Bingham, Goldie and Teugels \cite{BGT} for background on regularly varying functions. In particular, observe that under assumption \eqref{hyp:rv}, we have $\kappa = 0$. In addition to the regular variation of $\phi$, we require an extra technical assumption, which guarantees that the law of $X_1$ is strongly non-lattice, namely that the three following conditions hold
\begin{equation}
\label{hyp:tech}
\tag{$\mathbf {H_{\mathrm{tech}}}$}
\int_0^{\infty}\frac{\pi(\mathrm dx)}{x}=\infty, \quad \int_{1}^{\infty}e^{a\mathrm{Re}(\Phi(ix))}\mathrm dx<\infty \text{ for some }a>0, \quad \limsup_{x \rightarrow \infty} \mathrm{Re}(\Phi(ix)) <0.
\end{equation}
This condition comes from our use of results by Doney and Rivero \cite{DR13,DR16_err}, but we are not convinced that they are necessary for the following result to hold.

\begin{theorem}
\label{thm:asympBehavior}
Assume \eqref{hyp:rv} and \eqref{hyp:tech} and define for all $q > 0$
\begin{equation*}
  \label{eqn:defcq}
  b(q) := \frac{\sqrt{\pi}}{(\gamma+1)\Gamma(\frac{\gamma}{\gamma + 1})} \cdot \phi(q) \Phi(q)^{\frac{1}{2(\gamma + 1)} - \frac{3}{4}}.
\end{equation*}
Then we have
\[
  \E\left[ e^{-q \xi_{\rho(t)}}\right]   \underset{t \to \infty} \sim   b(q)\cdot  t^{1/4} \Phi^{-1}(t^{-1/2})  e^{-2 \sqrt{\Phi(q)t}}.
\]
\end{theorem}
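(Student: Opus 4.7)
The plan is to apply a saddle-point analysis to the integral representation provided by Theorem~\ref{thm:lapTransform}, namely
\[
  \E\left[e^{-q\xi_{\rho(t)}}\right] = \phi(q) \int_0^\infty e^{-\Phi(q) x - t/x}\, x\, L(\dd x).
\]
The exponent $x \mapsto \Phi(q) x + t/x$ attains its minimum at the saddle point $x_t^* := \sqrt{t/\Phi(q)}$, with value $2\sqrt{\Phi(q) t}$, explaining the exponential factor $e^{-2\sqrt{\Phi(q) t}}$ in the conclusion. Since $x_t^* \to \infty$, the prefactor will be governed by the behavior of $L$ near infinity.

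Under assumption \eqref{hyp:rv}, $\phi$ is regularly varying at $0$ with index $\gamma$, hence by Karamata $\Phi$ is regularly varying at $0$ with index $\gamma + 1$, and so $\Phi^{-1}$ is regularly varying at $0$ with index $\alpha := 1/(\gamma+1) \in [1/2, 1)$. Since $\Phi^{-1}$ is the Laplace exponent of a subordinator with L\'evy measure $L$, combining Karamata's Tauberian theorem with the monotone density theorem yields the tail asymptotic
\[
  L((x, \infty)) \sim \frac{1}{\Gamma(1-\alpha)} \Phi^{-1}(1/x) \quad \text{as } x \to \infty,
\]
and consequently the rescaled measure $L(x_t^* \,\cdot\,)/\Phi^{-1}(1/x_t^*)$ converges vaguely on $(0, \infty)$ to the stable-type L\'evy measure $\alpha u^{-\alpha - 1}\,\dd u/\Gamma(1-\alpha)$.

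Setting $\lambda_t := \sqrt{\Phi(q) t}$ and changing variables via $x = x_t^* u$, the integral becomes
\[
  x_t^* e^{-2\lambda_t} \int_0^\infty e^{-\lambda_t(u + 1/u - 2)}\, u\, L_{x_t^*}(\dd u),
\]
where $L_{x_t^*}$ denotes the pushforward of $L$ under $x \mapsto x/x_t^*$. Since $u + 1/u - 2 \sim (u-1)^2$ near $u = 1$ and $\lambda_t \to \infty$, the integrand concentrates on a Gaussian window of width $\lambda_t^{-1/2}$ around $u = 1$. A Gaussian approximation combined with the vague limit of $L_{x_t^*}$ then suggests
\[
  \int_0^\infty e^{-\lambda_t(u + 1/u - 2)}\, u\, L_{x_t^*}(\dd u) \sim \frac{\alpha}{\Gamma(1-\alpha)} \Phi^{-1}(1/x_t^*) \int_{\R} e^{-\lambda_t(u-1)^2}\,\dd u = \frac{\alpha \sqrt \pi}{\Gamma(1-\alpha)}\, \lambda_t^{-1/2}\, \Phi^{-1}(1/x_t^*).
\]
Plugging in $x_t^* = \sqrt{t/\Phi(q)}$, $\lambda_t = \sqrt{\Phi(q) t}$, and applying the regular variation of $\Phi^{-1}$ to write $\Phi^{-1}(\sqrt{\Phi(q)/t}) \sim \Phi(q)^{\alpha/2}\Phi^{-1}(t^{-1/2})$, then substituting $\alpha = 1/(\gamma+1)$ and $1 - \alpha = \gamma/(\gamma+1)$, the constants collapse to exactly $b(q)\, t^{1/4} \Phi^{-1}(t^{-1/2}) e^{-2\sqrt{\Phi(q) t}}$.

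The main obstacle is to make this Laplace-method argument rigorous when $L$ is a priori only a L\'evy measure. Two ingredients are required: first, the negligibility of the contributions from $u$ away from $1$, which follows from the super-exponential decay of $e^{-\lambda_t(u + 1/u)}$ as $u \to 0$ or $u \to \infty$, combined with the Tauberian tail estimate on $L$ and the exponential damping $e^{-\Phi(q)x}$ for $x$ large; and, more delicately, an upgrade from the vague convergence of $L_{x_t^*}/\Phi^{-1}(1/x_t^*)$ to a local density-level convergence on the scale $\lambda_t^{-1/2}$, since the saddle window shrinks with $t$. This is the step where assumption \eqref{hyp:tech} enters, via the local limit theorems of Doney and Rivero~\cite{DR13,DR16_err}: under \eqref{hyp:rv} and \eqref{hyp:tech} the measure $L$ admits a density at infinity with precise stable-type local asymptotics, providing the uniform control needed to apply dominated convergence inside the rescaled integral and thereby legitimizing the heuristic computation above.
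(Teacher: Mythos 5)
Your proposal follows essentially the same route as the paper: both start from the representation of Theorem~\ref{thm:lapTransform}, locate the saddle point $\sqrt{t/\Phi(q)}$ giving the factor $e^{-2\sqrt{\Phi(q)t}}$, identify the tail $\bar L(x)\sim \Phi^{-1}(1/x)/\Gamma\big(\tfrac{\gamma}{\gamma+1}\big)$ via Karamata and the monotone density theorem, and invoke the Doney--Rivero local estimates (exactly where \eqref{hyp:tech} enters) to control the $L$-mass of the saddle window before a dominated-convergence argument; your constant bookkeeping correctly reproduces $b(q)$. The differences are only presentational: the paper carries out the Gaussian-window step through the Fubini rewriting of Lemma~\ref{eq:dev} in terms of increments of $\bar V$ rather than your change of variables, and it must extend the fixed-window Doney--Rivero estimate to windows of width $o(x)$ (the saddle window has width of order $t^{1/4}$ in the $x$ variable, so the fixed-$\Delta_0$ statement alone does not suffice), a point your sketch subsumes under ``uniform control'' / ``density at infinity'' rather than spelling out.
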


\medskip

\begin{remark}
Note that under assumption \eqref{hyp:rv}, $\Phi$ and $\Phi^{-1}$ are both regularly varying at $0$ with respective indices $\gamma + 1 \in (1,2]$ and $\frac{1}{\gamma+1} \in [1/2,1)$. Therefore, Theorem~\ref{thm:asympBehavior} implies that $e^{\sqrt{2\Phi(q)t}} \E\big[e^{-q \xi_{\rho(t)}}\big]$ is regularly varying at $\infty$ with index $\frac{1}{4}-\frac{1}{2(\gamma+1)} \in (-\frac{1}{4},0]$. It hints at the fact that in the bound in Proposition~\ref{prop:unifBound} is not optimal.
\end{remark}

Letting next $q$ depends on $t$ in (\ref{eq:Laplace_intro}),  we further obtain the large time scaling limit of $\xi_{\rho}$. To describe it, let $D(\gamma)$ denote the distribution on $(0,\infty)$ with Laplace transform
$$
q \in (0,\infty) \mapsto \frac{q^{\gamma}}{\Gamma\left(\frac{\gamma}{\gamma+1} \right)} \int_0^{\infty} e^{-q^{\gamma+1}u-\frac{1}{u}} \frac{ \mathrm du}{u^{\frac{1}{1+\gamma}}}.$$
This distribution will be introduced in Section~\ref{sec:stable} as the stationary distribution of the process $\Big(t^{-1/(\gamma+1)}\xi^{(\gamma)}_{\rho(t)}, t\geq 0\Big)$, where $\xi^{(\gamma)}$ is a $\gamma$-stable subordinator.

\smallskip

\begin{theorem}
\label{thm:asymp_distribution}
\begin{enumerate}
\item[\emph{(i)}]
Under assumption \eqref{hyp:rv},
$$
  \Phi^{-1}(1/t) \xi_{\rho(t)} \ \underset{t \to \infty}{\overset{\mathrm{(d)}}\longrightarrow} \ D(\gamma).
$$
\item[\emph{(ii)}] \emph{(Strong law of large numbers and fluctuations when $\gamma=1$)}. When $\gamma=1$, this reads  $\Phi^{-1}(1/t) \xi_{\rho(t)} \rightarrow 2$ in probability since $D(1)=\delta_{1/2}$. If we assume additionally that $m:=c+\int_0^{\infty}x \pi(\mathrm dx)<\infty$, then $\Phi^{-1}(1/t)\sim \sqrt{2/mt}$ \ and the convergence holds almost surely:
$$
\frac{ \xi_{\rho(t)}}{\sqrt t} \ {\overset{\mathrm{a.s.}}{\underset{t \to \infty}\longrightarrow}} \ \sqrt{2m}.
$$
Assuming further that $a:=\int_0^{\infty}x^2 \pi(\mathrm dx)<\infty$ and \eqref{hyp:tech}, we have a central limit theorem:
$$
  t^{1/4}\left(\frac{ \xi_{\rho(t)}}{\sqrt t} - \sqrt{2m} \right)  \ \overset{\mathrm{(d)}}{\underset{t \to \infty}\longrightarrow} \ \mathcal N\Bigg(0,\frac{\sqrt 2 a}{3 \sqrt m}\Bigg).
$$
\end{enumerate}
\end{theorem}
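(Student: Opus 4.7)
The plan is to treat (i) by exploiting the exact Laplace-transform identity of Theorem~\ref{thm:lapTransform} together with regular-variation asymptotics for $\phi$, $\Phi$ and the L\'evy measure $L$, and to handle (ii) by a Taylor expansion of $\rho(t)$ combined with the functional limit theorems for the L\'evy process $\xi$.

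For part (i), I would substitute $q\Phi^{-1}(1/t)$ for $q$ in Theorem~\ref{thm:lapTransform} and perform the change of variable $x = tu$, which yields
\begin{equation*}
  \E\left[e^{-q\Phi^{-1}(1/t)\xi_{\rho(t)}}\right] = t\,\phi\!\left(q\Phi^{-1}(1/t)\right)\int_0^{\infty} e^{-t\Phi(q\Phi^{-1}(1/t))u - 1/u}\,u\,L(t\,\mathrm du).
\end{equation*}
Under \eqref{hyp:rv}, $\Phi$ and $\Phi^{-1}$ are regularly varying at $0$ with indices $\gamma+1$ and $\alpha := 1/(\gamma+1)\in[1/2,1)$ respectively. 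Combined with the Karamata identity $q\phi(q)/\Phi(q) \to \gamma+1$, this gives $t\Phi(q\Phi^{-1}(1/t)) \to q^{\gamma+1}$ and $t\phi(q\Phi^{-1}(1/t))\sim (\gamma+1)q^\gamma/\Phi^{-1}(1/t)$. Since $\Phi^{-1}$ is a Bernstein function regularly varying at $0$ with index $\alpha$, the Tauberian theorem for Laplace exponents of subordinators yields $L((x,\infty))\sim \Phi^{-1}(1/x)/\Gamma(1-\alpha)$ as $x\to\infty$, hence the vague convergence on $(0,\infty)$
\[
  \frac{L(t\,\mathrm du)}{\Phi^{-1}(1/t)} \longrightarrow \frac{\alpha}{\Gamma(1-\alpha)}\,u^{-\alpha-1}\,\mathrm du.
\]
The test function $u\mapsto u\,e^{-q^{\gamma+1}u - 1/u}$ is dominated by $u\,e^{-1/u}$ near $0$ and by $u\,e^{-q^{\gamma+1}u}$ at infinity, so a standard dominated convergence argument produces in the limit the Laplace transform of $D(\gamma)$, namely $\frac{q^\gamma}{\Gamma(\gamma/(\gamma+1))}\int_0^\infty e^{-q^{\gamma+1}u-1/u} u^{-1/(\gamma+1)}\,\mathrm du$ (using $(\gamma+1)\alpha = 1$).

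For part (ii) with $\gamma = 1$ and $m<\infty$, the asymptotics $\phi(q)\sim mq$ and $\Phi(q)\sim mq^2/2$ near $0$ give $\Phi^{-1}(1/t)\sim\sqrt{2/(mt)}$. The SLLN for subordinators gives $\xi_u/u\to m$ a.s., hence $\int_0^r\xi_u\,\mathrm du \sim mr^2/2$ a.s.; monotone inversion then yields $\rho(t)\sim\sqrt{2t/m}$ a.s., and $\xi_{\rho(t)}\sim m\rho(t)\sim\sqrt{2mt}$ a.s., proving the strong law. For the fluctuations, I would write $\tilde\xi_u := \xi_u - mu$ (a centered L\'evy process with $\Var(\tilde\xi_u) = au$) and $G(r) := \int_0^r \tilde\xi_u\,\mathrm du$. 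The identity $m\rho(t)^2/2 = t - G(\rho(t))$ combined with a Taylor expansion of $\sqrt{\,\cdot\,}$ and the a.s.\ asymptotic just established yields
\[
  \xi_{\rho(t)} - \sqrt{2mt} = m\bigl(\rho(t) - \sqrt{2t/m}\bigr) + \tilde\xi_{\rho(t)} = -\sqrt{m/(2t)}\,G(\rho(t)) + \tilde\xi_{\rho(t)} + o_P(t^{1/4}).
\]
Setting $r_t := \sqrt{2t/m}$, Donsker's invariance principle for the square-integrable L\'evy process $\tilde\xi$ gives $(\tilde\xi_{r_t s}/\sqrt{a r_t})_{0\le s\le 1}\Rightarrow(B_s)_{0\le s\le 1}$, and the a.s.\ convergence $\rho(t)/r_t \to 1$ lets me replace $\rho(t)$ by $r_t$ inside $G$ and $\tilde\xi$. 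Dividing by $t^{1/4}$ and applying the continuous mapping theorem then gives
\[
  t^{-1/4}\bigl(\xi_{\rho(t)} - \sqrt{2mt}\bigr) \xrightarrow[t\to\infty]{(d)} (2/m)^{1/4}\sqrt{a}\left(B_1 - \int_0^1 B_s\,\mathrm ds\right),
\]
and integration by parts $B_1 - \int_0^1 B_s\,\mathrm ds = \int_0^1 s\,\mathrm dB_s\sim\mathcal N(0,1/3)$ identifies the limit law as $\mathcal N\bigl(0,\sqrt 2\,a/(3\sqrt m)\bigr)$.

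The main obstacle in (i) is the rigorous vague convergence of the rescaled L\'evy measure and the ensuing uniform integrability of the test function; the fast decay of $u\,e^{-1/u}$ near zero is essential to control the singular limit density $u^{-\alpha-1}$ there. In (ii), the crux is transferring the functional CLT from the deterministic time $r_t$ to the random time $\rho(t)$, which is handled by the a.s.\ asymptotic $\rho(t) \sim r_t$ coming from the LLN.
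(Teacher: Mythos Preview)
Your proof is correct and, for both nontrivial parts, takes a route that differs from the paper's.

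For part (i), the paper does not perform the substitution $x=tu$ directly in Theorem~\ref{thm:lapTransform}. Instead it passes through the saddle-point rewriting of Lemma~\ref{eq:dev}, expressing the Laplace transform as $\phi(q)e^{-2\sqrt{\Phi(q)t}}\int_0^\infty 2u e^{-u^2}\big(\bar V(a_{u,t,q}+b_{u,t,q})-\bar V(a_{u,t,q}-b_{u,t,q})\big)\,\mathrm du$, and then invokes the regular variation of $\bar V$ together with Potter's bounds to pass to the limit. Your direct scaling argument is cleaner: the vague convergence $L(t\,\mathrm du)/\Phi^{-1}(1/t)\to \frac{\alpha}{\Gamma(1-\alpha)}u^{-\alpha-1}\,\mathrm du$ follows from the Tauberian estimate of Lemma~\ref{lem:varVL}~(i), and the integrability you need (Potter's bounds for $\bar L$ on the tail, the bound $\int_0^\delta u\,L(t\,\mathrm du)=t^{-1}\bar V(t\delta)$ near zero) is exactly what the paper establishes anyway. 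Both approaches are equivalent in difficulty.

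For the CLT in part (ii), the paper again works analytically: it substitutes $q=\lambda t^{-1/4}$ into Lemma~\ref{eq:dev}, expands $e^{-2\sqrt{\Phi(\lambda t^{-1/4})t}}$ using the three-term Taylor expansion of $\Phi$, and controls the increments $\bar V(a+b)-\bar V(a-b)$ via Corollary~\ref{cor:increments}, which rests on the Doney--Rivero local estimates and hence on \eqref{hyp:tech}. Your probabilistic argument via the exact identity $m\rho(t)^2/2 = t - G(\rho(t))$, Donsker's invariance principle for the centered L\'evy process $\tilde\xi$, and the Gaussian computation $B_1-\int_0^1 B_s\,\mathrm ds=\int_0^1 s\,\mathrm dB_s\sim\mathcal N(0,1/3)$ is entirely different and has a concrete advantage: it never uses \eqref{hyp:tech}, so you have in fact proved the CLT under the sole moment assumption $a<\infty$. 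The paper's analytic route gives sharper control on the Laplace transform itself (useful for Theorem~\ref{thm:asympBehavior}), but for the distributional limit your pathwise approach is both shorter and more general.
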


Finally, we return to our fragmentation model by considering a fragmentation process $F$ with parameters $(\nu,c, |\log|^{-1})$ and its typical fragment $e^{-\xi^{(\nu,c)}_{\rho}}$. The last theorem can be used to describe the large time asymptotics of the empirical distribution of the whole fragmentation process, following standard methods developed in \cite{BertoinAB03}. In the forthcoming proposition, we let $\phi_{(\nu,c)} (q)=cq+\int_{\mathcal{S}} \sum_{i\geq 1} s_i\big(1-s_i^{q}\big)\nu(\dd \mathbf{s})$ denote the Laplace exponent of $\xi^{(\nu,c)}$ and define $m$ and $a$ from its drift and L\'evy measure as above in the previous theorem. We also let $\Phi_{(\nu,c)}$ be the primitive of $\phi_{(\nu,c)}$ null at 0 and $\Phi^{-1}_{(\nu,c)}$ its inverse.
The limits below hold for the topology of  weak convergence of probability measures.

\begin{proposition}
\label{thm:cvdps}
\begin{enumerate}
\item[\emph{(i)}]
Assume that  $\phi_{(\nu,c)}$ is regularly varying at 0 with index $\gamma \in (0,1]$. Then,
$$
 \sum_{i \geq 1} F_i(t) \delta_{\Phi_{(\nu,c)}^{-1}(1/t) \left |\log F_i(t) \right|}   \ \overset{\mathbb P}{\underset{t \to \infty}\longrightarrow} \ D(\gamma).
$$
\item[\emph{(ii)}] Assuming further that $\gamma = 1$, $a<\infty$ \emph{(}so $m<\infty$\emph{)} and \eqref{hyp:tech} for $\phi_{(\nu,c)}$,
$$
 \sum_{i \geq 1} F_i(t) \delta_{\frac{\left |\log F_i(t) \right| -t^{1/2}\sqrt{2m}}{t^{1/4}}}  \ \overset{\mathbb P}{\underset{t \to \infty}\longrightarrow} \  \mathcal N\Bigg(0,\frac{\sqrt 2 a}{3 \sqrt m}\Bigg).
$$
\end{enumerate}
\end{proposition}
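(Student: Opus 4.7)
The plan is to establish convergence in probability of the random measures $\mu_t$ by the second-moment method: for each bounded continuous test function $g$, I will show that $\E[\mu_t(g)] \to \int g\, dD(\gamma)$ and $\Var(\mu_t(g))\to 0$. The first convergence is immediate from Proposition~\ref{prop:taggedFragment} applied with $f(x) = g(\Phi_{(\nu,c)}^{-1}(1/t)|\log x|)$, which yields
\[
  \E[\mu_t(g)] = \E\Big[g\big(\Phi_{(\nu,c)}^{-1}(1/t)\, \xi^{(\nu,c)}_{\rho(t)}\big)\Big] \underset{t\to\infty}{\longrightarrow} \int g\, dD(\gamma)
\]
by Theorem~\ref{thm:asymp_distribution}(i); the CLT scaling for part (ii) is handled identically using Theorem~\ref{thm:asymp_distribution}(ii).

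For the second moment, I pick two independent uniform variables $U_1, U_2$ in $(0,1)$ independent of the fragmentation, write $F_{U_k}(t)$ for the mass of the fragment containing $U_k$ at time $t$ and set $X_{t,k} := \Phi_{(\nu,c)}^{-1}(1/t)|\log F_{U_k}(t)|$, so that $\E[\mu_t(g)^2] = \E[g(X_{t,1}) g(X_{t,2})]$. Let $\sigma$ denote the first time at which $U_1$ and $U_2$ lie in distinct fragments. On $\{\sigma > t\}$ one has $X_{t,1}=X_{t,2}$, and the corresponding contribution is bounded by $\|g\|_\infty^2 \E[\sum_i F_i(t)^2] = \|g\|_\infty^2 \E[e^{-\xi^{(\nu,c)}_{\rho(t)}}]$ via a second application of Proposition~\ref{prop:taggedFragment} with $f(x)=x$. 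Under \eqref{hyp:rv} this tends to $0$ by bounded convergence, because Theorem~\ref{thm:asymp_distribution}(i) and $\Phi_{(\nu,c)}^{-1}(1/t) \to 0$ force $\xi^{(\nu,c)}_{\rho(t)} \to \infty$ in probability.

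On $\{\sigma \leq t\}$, the Markov branching property of the inhomogeneous fragmentation says that conditionally on $\mathcal{F}_\sigma$, the fragments containing $U_1$ and $U_2$ evolve as independent inhomogeneous fragmentations starting from masses $m_k = F_{U_k}(\sigma) \in (0,1)$. Adapting the tagged-fragment representation to a non-unit starting mass yields $|\log F_{U_k}(t)| = |\log m_k| + \xi^{(k)}_{\tilde\rho_k(t-\sigma)}$, where $\xi^{(1)}, \xi^{(2)}$ are independent copies of $\xi^{(\nu,c)}$ and $\tilde\rho_k(s) := \inf\{u>0 : u|\log m_k| + \int_0^u \xi^{(k)}_r\, dr > s\}$. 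Since $\sigma$ and $|\log m_k|$ are a.s.\ finite, the corrections $\Phi_{(\nu,c)}^{-1}(1/t)|\log m_k|$ vanish, and a comparison of $\tilde\rho_k(t-\sigma)$ with $\rho(t)$ using the regular variation of $\Phi_{(\nu,c)}$ shows that $\Phi_{(\nu,c)}^{-1}(1/t)\xi^{(k)}_{\tilde\rho_k(t-\sigma)} \to D(\gamma)$ in distribution for $k=1,2$; independence of $\xi^{(1)}, \xi^{(2)}$ together with dominated convergence then delivers $\E[g(X_{t,1})g(X_{t,2})\mathbf{1}_{\sigma \leq t}] \to (\int g\, dD(\gamma))^2$. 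Combining both contributions yields $\Var(\mu_t(g)) \to 0$, hence $\mu_t(g) \to \int g\, dD(\gamma)$ in probability for each $g$, and a countable convergence-determining class upgrades this to weak convergence in probability. Part~(ii) follows along the same lines, the only extra checks being $|\log m_k|/t^{1/4} \to 0$ and $\sqrt{2m}\,(t^{1/2} - (t-\sigma)^{1/2}) = O(\sigma/t^{1/2}) = o(t^{1/4})$. The main obstacle is precisely this comparison step: since the rate $|\log|^{-1}$ does not yield a self-similar scaling, no clean identity can be invoked; the effect of the random starting mass $m_k$ at the random time $\sigma$ must be controlled by hand, by a careful comparison of the two time changes $\tilde\rho_k$ and $\rho$.
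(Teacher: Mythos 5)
Your overall strategy coincides with the paper's proof: the first moment is handled exactly as in the paper via Proposition~\ref{prop:taggedFragment} and Theorem~\ref{thm:asymp_distribution}, the second moment is computed by tagging two independent uniform points and decomposing at their separation time, and the upgrade from test functions to weak convergence in probability is the same (the paper uses characteristic functions, your convergence-determining class plays the same role). The only structural difference is where the decoupling is performed. The paper (Lemma~\ref{lem:secondMoment}) works with the two coupled tagged subordinators $\xi,\xi'$ built from the \emph{homogeneous} interval representation: they coincide up to the homogeneous separation time $T$ and have i.i.d.\ increments afterwards by the strong Markov property, so the only thing left to justify is that the a.s.\ finite pre-$T$ data are washed out by the factor $\Phi_{(\nu,c)}^{-1}(1/t)\to 0$. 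You instead condition at the inhomogeneous separation time $\sigma$, restart two independent fragmentations from masses $m_k$, and must then compare the shifted time change $\tilde{\rho}_k$ with $\rho$.

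That comparison is the one genuine gap in your write-up: you assert that regular variation ``shows'' $\Phi_{(\nu,c)}^{-1}(1/t)\,\xi^{(k)}_{\tilde{\rho}_k(t-\sigma)}\to D(\gamma)$ and then concede at the end that the step must be ``controlled by hand'' without doing so — yet the whole second-moment lemma hinges on exactly this insensitivity statement. It is not a dead end, though. Writing $\ell=|\log m_k|$, one has $\rho^{(k)}\bigl(s-\ell\,\rho^{(k)}(s)\bigr)\le\tilde{\rho}_k(s)\le\rho^{(k)}(s)$, and since $\ell$ and $\sigma$ are a.s.\ finite while $\rho^{(k)}(s)=o(s)$, the quantity $\xi^{(k)}_{\tilde{\rho}_k(t-\sigma)}$ is sandwiched (conditionally on the pre-separation data) between terms of the form $\xi^{(k)}_{\rho^{(k)}(s_t)}$ with $s_t/t\to1$; monotonicity of $s\mapsto\xi_{\rho(s)}$ and the regular variation of $\Phi_{(\nu,c)}^{-1}$ then give the same limit $D(\gamma)$, and for part (ii) one also checks that a time perturbation of order $\sqrt{t}$ moves the centering $\sqrt{2mt}$ by $O(1)=o(t^{1/4})$, consistent with your two extra checks. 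So either carry out this sandwich argument explicitly, or adopt the paper's formulation, where the decomposition at the level of the two coupled subordinators reduces the obstacle to the single observation that finite initial data vanish under the scaling.
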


\subsection{Related results on (in)homegeneous fragmentation processes}

To conclude, we compare the large time behavior of the inhomogeneous fragmentation processes we considered here, where particles with mass $m$ split at rate proportional to $|\log m|^{-1}$, to related works on (in)homogeneous fragmentation processes. The most classical class of (in)homogeneous fragmentations are \emph{self-similar fragmentations}, in which particles of mass $m$ split at rate proportional to $m^\alpha$ for some $\alpha \in \R$. When $\alpha=0$,  one recovers the homogeneous fragmentation processes introduced in Section~\ref{sec:intro_frag}. When $\alpha \neq 0$, the processes are constructed from homogeneous ones via the time-change (\ref{time_change_tau}) with $\tau(m)=m^{\alpha}$. These processes naturally satisfy a self-similarity property and as so play an important role as  scaling limits of various models. Their large time behaviors have been well-studied, see notably \cite{BertoinAB03}, and can be summarized as follows when the logarithm of the jumps of a typical fragment has a finite mean:
\begin{enumerate}
\item[$\bullet$] when $\alpha=0$, the masses of particles are asymptotically proportional to $e^{-vt}$ for some $v>0$, where $t$ denotes the time: see \cite[Theorem 1]{BertoinAB03} for the analogue of Proposition~\ref{thm:cvdps} in this situation; see further \cite{Berestycki03, Krell08} for a deep analysis of the different possible exponential rates of decrease
\item[$\bullet$] when $\alpha>0$, the masses of particles are asymptotically proportional to $t^{-1/\alpha}$: see \cite[Theorem 3]{BertoinAB03} for the analogue of Proposition~\ref{thm:cvdps} in this situation
\item[$\bullet$] when $\alpha<0$, the processes gets extincted in finite time: see \cite[Proposition 2]{BertoinAB03}; see further \cite{H03,H23tail} for informations on the extinction times and \cite{GH1,GH2} for the behavior of the processes in the neighborhood of their extinction time.
\end{enumerate}
Processes with fragmentation rates proportional to $|\log m|^{-1}$, where our results roughly says that the masses are asymptotically proportional to $e^{-v\sqrt t}$ for some $v>0$ (when the logarithm of the jumps of a typical fragment has a finite mean) can therefore be interpreted as an interpolation between homogeneous fragmentations and self-similar fragmentations with a positive index $\alpha$. Note however that our Proposition~\ref{thm:cvdps} (i) is relatively simple to implement and that the main results of this paper concern the exact computation, at any time $t$, of the positive powers of the typical fragment (via Theorem~\ref{thm:lapTransform}) and their asymptotic behaviors as $t\rightarrow \infty$ (Theorem~\ref{thm:asympBehavior}). Similar results can trivially be settled for homogeneous fragmentations but seem less obvious for self-similar ones with indices $\alpha \neq 0$.

In a related way, we observe that "dual" fragmentation processes in which particles  with mass $m$ split at rate proportional to $|\log m|$ also appeared in the literature under another guise. Indeed, the time-change (\ref{time_change_tau}) with $\tau(m)=|\log m|$ leads to a typical fragment $e^{-\xi_{\eta}}$ where $\xi$ is still a subordinator and $ \eta(t) = \inf \left\{ r \geq 0 : \int_0^r (\xi_u)^{-1} \dd u \geq t  \right\}$. We recognize here the celebrated Lamperti's transform relating L\'evy processes with no negative jumps to continuous-state branching processes, see e.g. \cite{Lamp67bis,CLUB09}. In other words, $\xi_{\eta}$ is a CSBP and one can compute explicitly the Laplace transforms of $\xi_{\eta(t)}$ at any time $t$ when $\xi_0=x>0$. Indeed, it is well-known that then $\mathbb E\big[e^{-\lambda \xi_{\eta(t)}}\big]=e^{-xu_t(\lambda)}$, where the function $t \mapsto u_t(\lambda)$ is characterized by the equation
$$
\frac{\partial u_t(\lambda)}{\partial t}=\phi( u_t(\lambda)), \quad u_0(\lambda)=\lambda,
$$
with $\phi$ the Laplace exponent of $\xi$.

\bigskip

\paragraph{Organization of the paper.} We prove in Section~\ref{sec:Laplace}~ Theorem~\ref{thm:lapTransform} and Proposition~\ref{prop:unifBound}. This is then exploited in Section~\ref{sec:asymptotic_sub} to obtain Theorem~\ref{thm:asympBehavior} and Theorem~\ref{thm:asymp_distribution}. This last result will in turn allow us to describe the asymptotic behavior of the whole fragmentation process in Section~\ref{sec:wholefrag} by proving Proposition~\ref{thm:cvdps}. Last, Section~\ref{sec:discussions} gathers some final discussions on the model and possible extensions to higher dimensional settings.

\section{The time-changed subordinator \texorpdfstring{$\xi_{\rho}$}{xiRho} and its Laplace exponent}
\label{sec:Laplace}

In this section and the next section, we work with $\xi$ a generic subordinator with Laplace exponent
\[
  \phi(q)=\kappa+cq+\int_0^{\infty} \left(1-\exp(-qx)\right)\pi(\mathrm dx), \qquad q \geq 0
\]
where $\kappa \geq 0$, $c \geq 0$ and $\pi$ is a measure on $(0,\infty)$ such that $\int_0^{\infty} (1 \wedge x) \pi(\mathrm dx)<\infty$. We assume throughout that $\pi(0,\infty)>0$ and $\xi_0=0$. We recall that $\rho(t)$ is the stopping time defined by
\begin{equation}
\label{eq:time_change}
\rho(t)=\inf \left\{u \geq 0 : \int_0^u \xi_r \mathrm dr >t \right\} \text{ for all }t\geq 0.
\end{equation}
Observe that $\rho(0) = \inf\{u \geq 0 : \xi_u > 0\}$ is either null almost surely, if $c > 0$ or $\pi((0,\infty)) = \infty$, or is strictly positive almost surely, when $c=0$ and $\pi$ is finite, equal to the first jump time of $\xi$, in which case $\xi_{\rho(0)}$ is equal to the first jump of $\xi$. Observe also that
\[
 \int_0^{\rho(t)} \xi_r \mathrm dr =t  \quad \text{for all} \quad t <\int_0^{\mathbf e(\kappa)}\xi_r \mathrm dr,
\]
where $\mathbf e(\kappa):=\inf\{t \geq 0:\xi_t=\infty\}$ denotes the death time of $\xi$. In particular when $\kappa=0$, $\mathbf e(\kappa)=\infty$ and $\xi_{\rho}$ increases asymptotically much slower than $\xi$, since $\xi_t \to \infty$ and $\rho(t)/t \to 0$ as $t \to \infty$, almost surely.

The main objective of this section is to compute the Laplace transform of $\xi_{\rho(t)}$ for any $t\geq 0$ by proving Theorem~\ref{thm:lapTransform}, and then to prove the upper bound settled in Proposition~\ref{prop:unifBound}. We will first start by introducing the measure $L$ in Section~\ref{sec:L}. It is worth noting that an explicit formula for this measure is generally inaccessible. However, before turning to the proof of Theorem~\ref{thm:lapTransform}, we consider in Section~\ref{sec:stable} a situation in which all quantities can be made explicit: this is the case when $\xi$ is a stable subordinator. In this case we also note that the process $(t^{-1/(1+\gamma)}\xi_{\rho(t)},t >0)$ is stationary, where $\gamma$ is the index of stability of $\xi$. The proof of  Theorem~\ref{thm:lapTransform} is then undertaken in Section~\ref{sec:proofThmain} and that of Proposition~\ref{prop:unifBound} in Section~\ref{sec:bounds}.

\subsection{Definition and first properties of the measure \texorpdfstring{$L$}{L}}
\label{sec:L}

We recall from the statement of Theorem~\ref{thm:lapTransform} that $\Phi$ is the primitive of $\phi$ satisfying $\Phi(0) = 0$, i.e.
\begin{align}
\label{LK_Phi}
\Phi(q)&=\kappa q+\frac{c}{2}q^2+\int_{\mathbb R_+} \left(q-\frac{\left(1-\exp(-qx)\right)}{x}\right)\pi(\mathrm dx) \\
\nonumber
&= \kappa q+\frac{c}{2}q^2+ q \pi([1,\infty))+\int_0^{\infty} \left(qx \mathbbm 1_{\{x<1\}} -\left(1-\exp(-qx)\right)\right) \frac{\pi(\mathrm dx)}{x}.
\end{align}
A key remark is that $\Phi$ is the Laplace transform of a Lévy process with diffusion coefficient $c$, drift $\kappa + \pi([1,\infty))$ and a L\'evy measure given by the image of $\pi(\dd x)/x$ by $x \mapsto -x$. We denote $X$ such a process, satisfying
\begin{equation*}
\label{def_X}
\mathbb E\big[e^{qX_t}\big]=e^{t \Phi(q)} \quad \text{for all } q,t \geq 0.
\end{equation*}
This class of processes, usually called \emph{spectrally negative L\'evy processes}, has remarkable properties and has been studied extensively. We refer to Bertoin's book \cite[Chapter VII]{BLevy} for background. Here, we note that either $X$ oscillates ($\limsup_{t\rightarrow \infty} X_t=-\liminf_{t\rightarrow \infty} X_t=+\infty$) when $\kappa=0$, or tends to $+\infty$ ($\lim_{t\rightarrow \infty} X_t=+\infty$) when $\kappa>0$, using that $\Phi'(0)=\phi(0)=\kappa$. It is well known that $\Phi^{-1}:[0,\infty)\mapsto [0,\infty)$ is the Laplace exponent of a subordinator. More precisely,

\begin{lemma}[Theorem 1, Chapter VII of \cite{BLevy}]
The process $\sigma$ defined for $t \geq 0$ by
\[
   \sigma_t = \inf\{ s > 0 : X_s > t\}=\inf\{ u >0:S_u>t\},
\]
where $S_t = \sup_{s \leq t} X_s$, is a subordinator with Laplace exponent $\Phi^{-1}$. Moreover $S$ is a local time at 0 of the reflected process $S-X$.
Therefore, if we let $L$ denote the L\'evy measure of $\sigma$,  we have
\[
  L(\dd x)=n(\zeta \in \mathrm dx)
\]
where $n$ denotes the excursion measure away from 0 of $S-X$ and $\zeta$ the lifetime of an excursion.
\end{lemma}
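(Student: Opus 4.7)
The statement is the classical fluctuation identity for the first-passage subordinator of a spectrally negative L\'evy process, so my plan is to recover it in three short steps: (i) verify that the two definitions of $\sigma_t$ coincide, (ii) prove via the strong Markov property and an optional-stopping computation on the exponential martingale of $X$ that $\sigma$ is a subordinator with Laplace exponent $\Phi^{-1}$, and (iii) identify $S$ as a local time at $0$ of $S-X$ and read off $L$ from the associated excursion measure.

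The equality $\inf\{s > 0 : X_s > t\} = \inf\{u > 0 : S_u > t\}$ is immediate from the monotonicity and right-continuity of $S$. Moreover, since $X$ has no positive jumps it crosses every level continuously, so $X_{\sigma_t} = t$ almost surely on $\{\sigma_t < \infty\}$. Combining this with the strong Markov property applied at $\sigma_t$ shows that $(\sigma_{t+s} - \sigma_t)_{s\geq 0}$ is independent of $\mathcal{F}_{\sigma_t}$ and distributed as $(\sigma_s)_{s\geq 0}$; together with the obvious monotonicity and right-continuity of $t \mapsto \sigma_t$, this makes $\sigma$ into a (possibly killed) subordinator. To identify its Laplace exponent, I would apply optional stopping to the martingale $M^{(q)}_s = \exp(q X_s - s \Phi(q))$ at $\sigma_t \wedge n$ and pass to the limit via dominated convergence, obtaining
\[
\E\bigl[\exp(-\Phi(q)\,\sigma_t)\,\mathbf{1}_{\sigma_t < \infty}\bigr] = e^{-q t}.
\]
Since $\Phi : [0,\infty) \to [0,\infty)$ is a continuous strictly increasing bijection (because $\phi > 0$ on $(0,\infty)$), the change of variable $\lambda = \Phi(q)$ yields $\E[\exp(-\lambda \sigma_t)] = \exp(-t\, \Phi^{-1}(\lambda))$ for all $\lambda \geq 0$, which is exactly the claim that $\sigma$ has Laplace exponent $\Phi^{-1}$.

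For the last step, I would observe that the closure of the range of $\sigma$ coincides with the zero-set $\{u \geq 0 : (S - X)_u = 0\}$, and that $S$ is the right-continuous inverse of $\sigma$. It\^o's excursion theory applied to the Feller process $S - X$ reflected at $0$ then identifies $S$ as a local time at $0$ and the L\'evy measure of the inverse local time $\sigma$ as the law of the excursion lifetime under the excursion measure $n$, yielding $L(\dd x) = n(\zeta \in \dd x)$. The \textbf{main obstacle} I anticipate is the normalization issue in this final step: excursion theory only characterizes local time up to a positive multiplicative constant, so one must explain why $S$ itself (rather than a multiple) is the correct normalization. This is forced precisely by the matching of the Laplace exponent in step (ii) and corresponds to the standard convention in fluctuation theory; this is where invoking Bertoin's book provides the cleanest reference.
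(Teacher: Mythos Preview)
The paper does not give its own proof of this lemma: it is stated as a citation of Theorem~1, Chapter~VII of \cite{BLevy}, and the text immediately moves on to consequences (the drift and killing of $\sigma$, the L\'evy--Khintchine representation \eqref{LK_invPhi}). Your sketch is correct and is precisely the classical argument one finds in Bertoin's book---the exponential-martingale optional stopping for the Laplace exponent, absence of positive jumps to get $X_{\sigma_t}=t$, and excursion theory to identify $L$ with $n(\zeta\in\cdot)$---so there is nothing to compare beyond noting that you have supplied the proof the paper deliberately outsourced.
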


Since $\Phi^{-1}(0)=0$ and, as $q\rightarrow \infty$, $\Phi^{-1}(q)/q$ converges to $(\kappa+\pi(0,\infty))^{-1}$ when $c=0$ and to $0$ otherwise, the subordinator $\sigma$ has no killing term and a drift equal to $\mathbbm 1_{\{c=0\}}(\kappa+\pi(0,\infty))^{-1}$. Consequently the function $\Phi^{-1}$ can be written for all $q\geq 0$ as
\begin{equation}
\label{LK_invPhi}
\Phi^{-1}(q)=\frac{q \mathbbm 1_{\{c=0\}}}{\kappa+\pi(0,\infty)}+\int_0^{\infty} \left(1-\exp(-qx)\right)L(\mathrm dx).
\end{equation}
The measure $L$ plays an important role in our study. We underline below a few simple observations. A first one, which will be useful for the proof of Proposition~\ref{prop:unifBound}, is the following connexion with a renewal measure.

\newpage

\begin{lemma}
\label{lem:renew}
Let $U$ be the  renewal measure of the subordinator $\sigma \circ \xi$ when we consider independent copies of $\xi$ and $\sigma$, and set $V(\mathrm dx):=xL(\mathrm dx)$. Then
$$
U=V + \frac{\mathbbm 1_{\{c=0\}}}{\kappa+\pi(0,\infty)} \delta_0.$$
Consequently, the function $\bar V$, defined by $\bar V(x):=V([0,x])$ for $x \geq 0$, satisfies:
$$
\bar V(x+y)\leq \bar V(x)+ \bar V(y) + \frac{\mathbbm 1_{\{c=0\}}}{\kappa+\pi(0,\infty)} \qquad \text{for all } x,y \geq 0.
$$
\end{lemma}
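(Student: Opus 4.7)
The plan is to identify both sides of the measure identity by computing Laplace transforms and matching them, and then to deduce the subadditivity inequality from the standard subadditivity of the renewal function $\bar U$. Throughout, I abbreviate $d := \frac{\mathbbm 1_{\{c=0\}}}{\kappa+\pi(0,\infty)}$, the drift of $\sigma$ appearing in \eqref{LK_invPhi}.

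First I would observe that $\sigma\circ\xi$ is itself a (possibly killed) subordinator: since $\sigma$ and $\xi$ are independent with respective Laplace exponents $\Phi^{-1}$ and $\phi$, conditioning on $\xi$ gives
\[
\E\big[e^{-q\sigma_{\xi_t}}\big] = \E\big[e^{-\xi_t\Phi^{-1}(q)}\big] = e^{-t\,\phi(\Phi^{-1}(q))},
\]
so its Laplace exponent is $\phi\circ\Phi^{-1}$, and its renewal measure satisfies
\[
\int_0^\infty e^{-qx}U(\mathrm dx) = \frac{1}{\phi(\Phi^{-1}(q))}, \qquad q>0.
\]

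To identify the right-hand side of the claimed identity, I would differentiate the L\'evy--Khintchine representation \eqref{LK_invPhi} termwise, which is justified by monotone convergence since the integrand $xe^{-qx}L(\mathrm dx)$ is non-negative. This yields
\[
(\Phi^{-1})'(q) = d + \int_0^\infty x e^{-qx} L(\mathrm dx) = d + \int_0^\infty e^{-qx} V(\mathrm dx).
\]
By the inverse function theorem applied to $\Phi\circ\Phi^{-1}=\mathrm{Id}$ together with $\Phi'=\phi$, one has $(\Phi^{-1})'(q)=1/\phi(\Phi^{-1}(q))$. Combining these two expressions,
\[
\int_0^\infty e^{-qx}\big(V+d\delta_0\big)(\mathrm dx) = d + \int_0^\infty e^{-qx}V(\mathrm dx) = \frac{1}{\phi(\Phi^{-1}(q))} = \int_0^\infty e^{-qx}U(\mathrm dx),
\]
and injectivity of the Laplace transform on Radon measures on $[0,\infty)$ forces $U = V + d\delta_0$.

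For the subadditivity inequality, I would invoke the standard fact that the renewal function of any subordinator $Y$ is subadditive, $\bar U(x+y)\le \bar U(x)+\bar U(y)$. This follows from the strong Markov property at $T_x := \inf\{t\ge 0 : Y_t > x\}$: on $\{T_x<\infty\}$, the post-$T_x$ trajectory is an independent copy of $Y$ translated by $Y_{T_x}>x$, so its contribution to $\bar U(x+y)$ is at most $\bar U(y)$. Plugging in $\bar U = \bar V + d$, the two occurrences of $d$ on the right cancel one on the left, leaving exactly $\bar V(x+y)\le \bar V(x)+\bar V(y)+d$.

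The only delicate point is the termwise differentiation of \eqref{LK_invPhi}; once this is settled by monotone convergence (which also certifies $\int_0^\infty x e^{-qx}L(\mathrm dx)<\infty$ for $q>0$), everything else is bookkeeping and a one-line invocation of the strong Markov property.
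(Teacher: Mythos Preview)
Your proof is correct and follows essentially the same route as the paper: both identify $U$ with $V+d\delta_0$ by matching Laplace transforms via $(\Phi^{-1})'(q)=1/\phi(\Phi^{-1}(q))$ obtained from differentiating \eqref{LK_invPhi}, and then derive the inequality from the subadditivity of the renewal function $\bar U$. Your version simply spells out a few steps (the Laplace exponent of $\sigma\circ\xi$, the justification of termwise differentiation, and the strong-Markov reason behind subadditivity) that the paper leaves implicit.
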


\begin{proof}
For background on renewal measures of subordinators we refer to \cite[Chapter 3]{BLevy} or \cite[Chapter 5]{KypFluctu}.
By differentiating (\ref{LK_invPhi}), we note that
$$
\frac{1}{\phi \circ \Phi^{-1}(q)}=(\Phi^{-1})'(q)=\frac{\mathbbm 1_{\{c=0\}}}{\kappa+\pi(0,\infty)}+ \int_0^{\infty} e^{-qx} V(\mathrm dx).
$$
The function $\phi \circ \Phi^{-1}$ being the Laplace transform of the subordinator $\sigma \circ \xi$ when we consider independent copies of $\xi$ and $\sigma$, this implies that the measure $V$ is the contribution on $(0,\infty)$ of the renewal measure $U$ of $\sigma \circ \xi$ and that $U(\{0\})=\frac{\mathbbm 1_{\{c=0\}}}{\kappa+\pi(0,\infty)}$. It is well-known that the distribution function $x \mapsto U([0,x])$ is then subadditive. This has the particular consequence that the function $\bar V$ satisfies
\[\bar V(x+y)+ \frac{\mathbbm 1_{\{c=0\}}}{\kappa+\pi(0,\infty)} \leq \bar V(x)+ \frac{\mathbbm 1_{\{c=0\}}}{\kappa+\pi(0,\infty)}+ \bar V(y) + \frac{\mathbbm 1_{\{c=0\}}}{\kappa+\pi(0,\infty)} \quad \text{for all } x,y \geq 0. \qedhere  \]
\end{proof}

We finish with easy integrability properties of $L$. Using that $\Phi^{-1}$ is the Laplace exponent of the subordinator $\sigma$ of  first passage times of the Lévy process $X$, we remark that the process $t \mapsto \sigma_t - \frac{t \ind{c = 0}}{\kappa + \pi(0,\infty)}$ is a compound Poisson process if and only if either the L\'evy measure of $X$ is infinite or its diffusion coefficient $c$ is strictly positive. In other words, we have
\[
  L(0,\infty) < \infty \quad \text{ if and only if } \quad c = 0 \quad \text{ and } \quad \int_0^\infty x^{-1} \pi(\dd x) <\infty.
\]
Moreover, in this case we have
\[
  L(0,\infty) = \lim_{q \to \infty} \left(\Phi^{-1}(q) - \frac{q}{\kappa + \pi(0,\infty)}\right) = \frac{1}{\kappa + \pi(0,\infty)} \int_0^\infty x^{-1} \pi(\dd x).
\]
Similarly, we have $\int_0^{\infty} x L(\dd x) < \infty$ if and only if $\E[\sigma_t] < \infty$ for all $t$. Differentiating (\ref{LK_invPhi}), we see that
\[
  \int_0^{\infty} x L(\dd x) = (\Phi^{-1})'(0) - \frac{1}{\kappa + \pi(0,\infty)} = \frac{1}{\kappa} - \frac{\ind{c = 0}}{\kappa + \pi(0,\infty)}.
\]
In particular, $\int_0^{\infty} x L(\dd x) < \infty$ if and only if $\kappa > 0$.

\subsection{An explicit example: the stable subordinator}
\label{sec:stable}

In this paragraph we assume that $\xi$ is a stable subordinator with parameter $\gamma \in (0,1]$. Since the multiplication of the Laplace exponent $\phi$ by a positive constant results in a multiplication of time by the same constant in the time-changed subordinator,  we restrict ourselves, with no loss of generality, to the canonical case
$$
\phi(q)=(\gamma+1)q^{\gamma}, \qquad q \geq 0.
$$
We let $\xi^{(\gamma)}$ denote a subordinator with this Laplace exponent. In this case, one has
\[
  \Phi(q) = q^{\gamma  +1} \quad \text{ and } \quad \Phi^{-1}(q) = q^\frac{1}{\gamma + 1} = \frac{1}{(\gamma+1)\Gamma(\frac{\gamma}{\gamma+1})}\int_0^\infty (1 - e^{-q x}) \frac{\dd x }{x^{\frac{1}{\gamma  +1}+1}}.
\]
In particular, the Lévy measure of $\Phi^{-1}$ is $L(\dd x) = \frac{1}{(\gamma+1)\Gamma(\frac{\gamma}{\gamma+1})} \cdot \frac{\dd x }{x^{\frac{1}{\gamma  +1}+1}}$. Therefore, applying Theorem~\ref{thm:lapTransform}, the following holds.

\begin{corollary}
\label{cor:stable}
The process $\big(t^{-\frac{1}{\gamma+1}}\xi^{(\gamma)}_{\rho(t)}, t >0\big)$ is stationary, and its stationary distribution $D(\gamma)$ is characterized by its Laplace transform
\[
q \mapsto \frac{q^{\gamma}}{\Gamma\left(\frac{\gamma}{\gamma+1} \right)} \int_0^{\infty} e^{-q^{\gamma+1}u-\frac{1}{u}} \frac{ \mathrm du}{u^{\frac{1}{1+\gamma}}} =  \frac{2}{\Gamma\left(\frac{\gamma}{\gamma+1} \right)} q^{\frac{\gamma}{2}}K_{\frac{\gamma}{\gamma+1}}\left(2q^{\frac{\gamma+1}{2}}\right)
\]
where, for all $\alpha \geq 0$, $K_{\alpha}$ denotes the modified Bessel function of the second kind.
\end{corollary}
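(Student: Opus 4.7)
The proof is essentially a direct computation applying Theorem~\ref{thm:lapTransform} with the explicit data of the stable case. The plan is the following.

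\textbf{Step 1 (applying Theorem~\ref{thm:lapTransform}).} Substituting $\phi(q)=(\gamma+1)q^{\gamma}$, $\Phi(q)=q^{\gamma+1}$ and
\[
L(\dd x)=\frac{1}{(\gamma+1)\Gamma(\gamma/(\gamma+1))}\,\frac{\dd x}{x^{\frac{1}{\gamma+1}+1}}
\]
into \eqref{eq:Laplace_intro}, the factor $(\gamma+1)$ coming from $\phi(q)$ cancels the $1/(\gamma+1)$ in $L$, and after combining the extra factor $x$ one obtains
\[
\E\bigl[e^{-q\xi^{(\gamma)}_{\rho(t)}}\bigr]=\frac{q^{\gamma}}{\Gamma(\gamma/(\gamma+1))}\int_{0}^{\infty}e^{-q^{\gamma+1}x-\tfrac{t}{x}}\,\frac{\dd x}{x^{\frac{1}{\gamma+1}}}.
\]

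\textbf{Step 2 (stationarity of the rescaled process).} To see that the law of $t^{-1/(\gamma+1)}\xi^{(\gamma)}_{\rho(t)}$ does not depend on $t$, I would change variables $x=tu$ in the integral above applied at $q$ replaced by $qt^{-1/(\gamma+1)}$. The Jacobian contributes $t$, the factor $x^{-1/(\gamma+1)}$ contributes $t^{-1/(\gamma+1)}$, and the exponent becomes $-q^{\gamma+1}u-1/u$; together with the prefactor $q^{\gamma}t^{-\gamma/(\gamma+1)}$, all powers of $t$ cancel and we end up with
\[
\E\bigl[e^{-q\,t^{-1/(\gamma+1)}\xi^{(\gamma)}_{\rho(t)}}\bigr]=\frac{q^{\gamma}}{\Gamma(\gamma/(\gamma+1))}\int_{0}^{\infty}e^{-q^{\gamma+1}u-\tfrac{1}{u}}\,\frac{\dd u}{u^{\frac{1}{\gamma+1}}},
\]
which is precisely the announced Laplace transform of $D(\gamma)$.

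\textbf{Step 3 (Bessel identification).} For the second expression, I would invoke the classical integral representation
\[
K_{\nu}(z)=\frac{1}{2}\Bigl(\frac{z}{2}\Bigr)^{\nu}\int_{0}^{\infty}e^{-t-\frac{z^{2}}{4t}}\,t^{-\nu-1}\,\dd t
\]
and change variables $t=1/u$ to rewrite it as $K_{\nu}(z)=\tfrac12(z/2)^{\nu}\int_{0}^{\infty}e^{-1/u-z^{2}u/4}\,u^{\nu-1}\,\dd u$. Identifying $\nu-1=-1/(\gamma+1)$, i.e.\ $\nu=\gamma/(\gamma+1)$, and $z^{2}/4=q^{\gamma+1}$, i.e.\ $z=2q^{(\gamma+1)/2}$, gives
\[
\int_{0}^{\infty}e^{-q^{\gamma+1}u-\tfrac{1}{u}}\,\frac{\dd u}{u^{\frac{1}{\gamma+1}}}=2\,q^{-\gamma/2}\,K_{\gamma/(\gamma+1)}\!\bigl(2q^{(\gamma+1)/2}\bigr),
\]
which multiplied by the prefactor $q^{\gamma}/\Gamma(\gamma/(\gamma+1))$ yields exactly the second formula in the statement.

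There is no genuine obstacle: the only nontrivial step is recognising the right Bessel representation, which is completely standard. The rescaling in Step~2 is where the self-similarity of $\xi^{(\gamma)}$ manifests itself, and explains a posteriori why the Laplace transform of $\xi^{(\gamma)}_{\rho(t)}$ depends on $q$ and $t$ only through the combination $qt^{1/(\gamma+1)}$.
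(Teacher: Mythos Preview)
Your proof is correct and follows essentially the same route as the paper: plug the stable data into Theorem~\ref{thm:lapTransform}, observe that the resulting Laplace transform depends on $(q,t)$ only through $q\,t^{1/(\gamma+1)}$, and identify the integral with a modified Bessel function. The paper uses the equivalent identity $\int_0^\infty x^{\alpha-1}e^{-rx-s/x}\,\dd x = 2(s/r)^{\alpha/2}K_\alpha(2\sqrt{rs})$ instead of your representation of $K_\nu$, but this is purely cosmetic.

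One small caveat: your Step~2 shows that the \emph{one-dimensional marginals} of $t^{-1/(\gamma+1)}\xi^{(\gamma)}_{\rho(t)}$ do not depend on $t$, which is all that Theorem~\ref{thm:lapTransform} can give. The paper additionally remarks that the full process-level stationarity follows directly from the self-similarity of $\xi^{(\gamma)}$ (via $(a^{-1/\gamma}\xi^{(\gamma)}_{at})_{t\ge 0}\overset{(d)}{=}\xi^{(\gamma)}$, which yields $(a^{-1/(\gamma+1)}\xi^{(\gamma)}_{\rho(at)})_{t\ge 0}\overset{(d)}{=}\xi^{(\gamma)}_\rho$). You allude to this in your closing comment, but if full stationarity of the process is what is claimed, that self-similarity argument is the one that actually delivers it.
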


We recall from \cite[Chapter 9.6]{handbook} that the modified Bessel functions of the second kind of order $\alpha$ is the function defined for $x \in (0,\infty)$ by
\begin{equation*}
\label{K_int}
K_{\alpha}(x) = \int_0^{\infty} e^{-x \cosh(u)}\cosh(\alpha u) \mathrm du, \qquad x>0,
\end{equation*}
and that its asymptotic behaviors near $\infty$ and $0$ are given by
$$K_{\alpha}(x) \underset{x \to \infty}{\sim} \sqrt \frac{\pi }{2x}e^{-x}\left(1+\frac{4 \alpha^2-1}{8x}+O(x^{-2}) \right) \quad \text{and} \quad K_{\alpha}(x) \underset{x \to 0}{\sim} \frac{\Gamma(\alpha)}{2} \left(\frac{2}{x} \right)^{\alpha}.$$

\medskip

\begin{remark}
The modified Bessel function of the second kind of order $\alpha=1/2$ simply rewrites
$K_{1/2}(x) = \sqrt \frac{\pi }{2x}e^{-x},x>0$. Note that this implies that in Corollary~\ref{cor:stable}
$$D(1)=\delta_{2}.$$
Of course, this is consistent with the definition of the process $\xi^{(1)}_{\rho}$, since $\xi^{(1)}_t = 2t$ and then $\rho(t)=\sqrt t$, $t\geq 0$.
\end{remark}

\begin{proof}[\bf Proof of Corollary~\ref{cor:stable}]
The main point of this result concerns the identification, via its Laplace transform, of the stationary law $D(\gamma)$. Indeed, even if the stationarity of $(\xi^{(\gamma)}_{\rho(t)}{t^{-\frac{1}{\gamma+1}}})$ is an immediate consequence of Theorem~\ref{thm:lapTransform}, it can also be seen directly from the definition (\ref{eq:time_change}) of the time change $\rho$ and the self-similarity of $\xi^{(\gamma)}$, since for all $a>0$, $(a^{-\frac{1}{\gamma}} \xi^{(\gamma)}_{at},t\geq 0)$ is distributed as $\xi^{(\gamma)}$. Using this identity in law of processes and the definition of the time change $\rho$, we note that $\xi^{(\gamma)}_{\rho}$ itself is a self-similar Markov process: for all $a>0$,
$$
\Big(a^{-\frac{1}{1+\gamma}} \xi^{(\gamma)}_{\rho(at)},t\geq 0 \Big) \overset{\mathrm{(d)}} = \xi^{(\gamma)}_{\rho}.
$$
We now apply Theorem~\ref{thm:lapTransform} to compute the distribution $D(\gamma)$ of $\xi^{(\gamma)}_{\rho(1)}$: for $q>0$,
\begin{align*}
  \E\left[ e^{-q \xi^{(\gamma)}_{\rho(1)}} \right] &=  \phi(q)\int_0^{\infty} e^{-\Phi(q)x-\frac{1}{x}} xL(\mathrm dx)\\
  &= \frac{q^\gamma }{\Gamma(\frac{\gamma}{\gamma+1})} \int_0^\infty e^{-q^{\gamma + 1} x-\frac{1}{x}}  \frac{\dd x}{x^{\frac{1}{\gamma + 1}}}.
\end{align*}
Therefore, using that for all $\alpha > 0$ and $r,s > 0$, one has
\begin{equation*}
\label{Bessel2}
\int_0^{\infty} x^{\alpha-1} e^{-rx-\frac{s}{x}} \mathrm dx = 2 \left( \frac{s}{r}\right)^{\frac{\alpha}{2}} K_{\alpha} \left(2 \sqrt{rs}\right),
\end{equation*}
see e.g. \cite{Temme90}, we obtain the Laplace transform of the distribution $D(\gamma)$.
\end{proof}

\subsection{Proof of Theorem~\ref{thm:lapTransform}}
\label{sec:proofThmain}

We now prove Theorem~\ref{thm:lapTransform}. In that aim we first introduce, for all $t \geq 0, q>0$,
\begin{equation}
  \label{def:f}
  f(t,q):=(\Phi^{-1})'(q)\mathbb E\left[e^{-\Phi^{-1}(q) \xi_{\rho(t)}}\right]
\end{equation}
and observe that this bivariate function $f$ is the solution to a simple PDE.

\begin{lemma}
\label{lem:easy}
The function $f$ is a strong solution to the equation
\begin{equation}
\label{eq:easy}
 \partial_q \partial_t  f = \partial_t \partial_q f=  f.
\end{equation}
Additionally, the function $t \in [0,\infty) \mapsto f(t,q)$ is $\mathcal C^1$ on $(0,\infty)$ and continuous at 0, with $f(0,q)=\int_0^{\infty}xe^{-qx}L(\mathrm dx)$,  for all $q>0$.
\end{lemma}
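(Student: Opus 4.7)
\emph{Overall strategy.}  The plan is to introduce $g(t,p):=\E[e^{-p\xi_{\rho(t)}}]$ and $A:=\Phi^{-1}$, so that $A'(q)=1/\phi(A(q))$ (from $\Phi'=\phi$) and the definition of $f$ rewrites as $f(t,q)=u(t,A(q))$ where $u(t,p):=g(t,p)/\phi(p)$. The heart of the argument will be to prove the Kolmogorov-type identity
\begin{equation*}
\partial_t g(t,p)=-\phi(p)\int_p^\infty g(t,r)\,\mathrm dr,\qquad t>0,\ p>0.\quad(\star)
\end{equation*}
Once $(\star)$ is in hand, dividing by $\phi(p)$ and differentiating in $p$ yields $\partial_p\partial_t u=g$, so the chain rule gives
\[
\partial_q\partial_t f(t,q)=A'(q)\,\partial_p\partial_t u(t,A(q))=A'(q)\,g(t,A(q))=u(t,A(q))=f(t,q).
\]
The symmetric equality $\partial_t\partial_q f=f$ will then follow from Schwarz's theorem applied to $u$, whose analyticity in $p>0$ (as a Laplace transform divided by $\phi$) and $C^1$-regularity in $t>0$ (read off $(\star)$) are routine.

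\emph{Deriving $(\star)$.}  The process $\eta_t:=\xi_{\rho(t)}$ is Markov on $(0,\infty)$: $\rho(t)$ is an $\mathcal F$-stopping time, and by the strong Markov property of $\xi$ at $\rho(t)$ the future of $\eta$ past $t$ depends only on $\eta_t$. I would identify its generator by a first-order analysis of the time change: given the independent copy $\tilde\xi$ of $\xi$ delivered by the Markov property, $v(s):=\rho(t+s)-\rho(t)$ is characterised by $\int_0^{v(s)}(\eta_t+\tilde\xi_w)\,\mathrm dw=s$, whence $v(s)\sim s/\eta_t$ as $s\to 0$, and a direct reading of the first-order contribution of drift, jumps and killing yields the generator $\mathcal A^\eta h(y)=y^{-1}\mathcal A^\xi h(y)$ on $(0,\infty)$ (a Lamperti-type formula). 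Since $\int_0^{\rho(t)}\xi_r\,\mathrm dr=t>0$ forces $\eta_t>0$ almost surely for $t>0$, applying the generator identity to $h(y)=e^{-py}$, using $\mathcal A^\xi h(y)=-\phi(p)e^{-py}$ together with $e^{-py}/y=\int_p^\infty e^{-ry}\,\mathrm dr$ and Fubini, produces $(\star)$.

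\emph{Initial condition and expected difficulty.}  If $c>0$ or $\pi((0,\infty))=\infty$, then $\rho(0)=0$ a.s., $g(0,p)=1$, and differentiating the L\'evy--Khintchine representation \eqref{LK_invPhi} immediately gives $f(0,q)=A'(q)=\int_0^\infty xe^{-qx}L(\mathrm dx)$. In the remaining case $c=0$ and $\Lambda:=\kappa+\pi((0,\infty))<\infty$, $\rho(0)$ is the first event of $\xi$ and $\xi_{\rho(0)}$ is either a jump of law $\pi(\cdot)/\pi((0,\infty))$ or $+\infty$ upon killing; a short calculation using $\Lambda-\int e^{-A(q)x}\pi(\mathrm dx)=\phi(A(q))=1/A'(q)$ yields $f(0,q)=A'(q)-1/\Lambda$, which by \eqref{LK_invPhi} again equals $\int_0^\infty xe^{-qx}L(\mathrm dx)$. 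Continuity of $f(\cdot,q)$ at $0$ will follow from right-continuity of $t\mapsto\rho(t)$ and dominated convergence. The hardest step will be the rigorous derivation of $(\star)$: the generator $\mathcal A^\eta$ carries a $1/y$ singularity at the origin, so one must work on intervals $[t_0,\infty)$ along which $\eta_t\ge\eta_{t_0}>0$ by monotonicity, and control the difference quotients $(g(t+s,p)-g(t,p))/s$ uniformly in $s$ in order to exchange $\partial_t$ and $\E$; once this is carried out, the rest of the proof is a one-line application of the chain rule.
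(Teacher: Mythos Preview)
Your approach is correct in outline but takes a genuinely different route from the paper's. You attack the Kolmogorov identity $(\star)$ through the generator $\mathcal A^\eta h(y)=y^{-1}\mathcal A^\xi h(y)$ applied to exponentials, acknowledging that the $1/y$ singularity forces you to work away from the origin and to justify carefully that $e^{-py}$ lies in the domain. The paper explicitly mentions this route in a remark (Remark~\ref{rem:gene}) and agrees it is ``doable by approximation but requires the use of fine estimates''; it then deliberately avoids it. Instead, the paper proves a clean integral representation (Lemma~\ref{lem:rewritten}):
\[
  f(t,q)=\int_t^\infty \E\!\left[\frac{e^{-\Phi^{-1}(q)\xi_{\rho(s)}}}{\xi_{\rho(s)}}\right]\dd s,
\]
obtained by the change of variable $u=\rho(s)$ and the strong Markov property at $\rho(t)$; differentiating this in $t$ gives $\partial_t f=-\E[e^{-\Phi^{-1}(q)\xi_{\rho(t)}}/\xi_{\rho(t)}]$, and one more Fubini shows this equals $-\int_q^\infty f(t,r)\,\dd r$, which is exactly your $(\star)$ in the $q$-variable. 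The paper then proves $\partial_t\partial_q f=f$ \emph{directly}, by computing $\int_t^\infty f(s,q)\,\dd s$ via the same change of variable and checking it equals $-\partial_q f$, rather than invoking Schwarz. What the paper's approach buys is that the hard analytic step (domain of the generator, uniform control of difference quotients) is replaced by a one-line substitution $u=\rho(s)$ plus the strong Markov property; what your approach buys is a more conceptual picture of $(\star)$ as a Kolmogorov equation, at the cost of the technical work you flagged. Your treatment of the initial condition $f(0,q)$ matches the paper's in both cases.
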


In the proof of this lemma, we will note some regularity properties of $f$ that are a bit more precise than those stated here. Moreover, we will see further with Lemma \ref{lem:laplaceLaplace}  that $f$ is infinitely differentiable on $(0,\infty)^2$.

\medskip

\begin{remark}
The form \eqref{eq:easy} of the partial differential equation solved by $f$ comes from the time change $|\log |^{-1}$ in the fragmentation process. If we have used a time change of the form $|\log |^{-k}$ for some $k \in \N$,
we would have obtained for the Laplace transform $h(t,q):=\mathbb E\big[e^{-q \xi_{\rho(t)}}\big]$ the equation
\[
 \partial_q^k  \left( \frac{\partial_t h}{\phi(q)}\right)= (-1)^{k+1}h,
\]
with similar arguments.
\end{remark}

\smallskip

\begin{remark}
\label{rem:gene}
Observe that the infinitesimal generator $\mathcal G^{\exp(-\xi_{\rho})}$ of $\exp(-\xi_{\rho})$ is given by
\begin{equation*}
\label{generator}
\mathcal G^{\exp(-\xi_{\rho})}(g)(x)=\frac{1}{\left|\log(x)\right|}\left(-\kappa g(x)-cxg'(x)+\int_0^{\infty} \left(g(x\exp(-y))-g(x)\right) \pi (\mathrm dy)\right)
\end{equation*}
at least for functions $g:[0,1] \rightarrow \mathbb R$ that are continuously differentiable, null in a neighborhood of 1 and such that $g(0)=g'(0)=0$, using the formula for the generator of a Lévy process and classical results on time-changed processes from Lamperti \cite{Lamp67}. Lemma~\ref{lem:easy} could therefore be proved by applying Kolmogorov's forward equation to $x \mapsto x^q$, $q>0$ provided that we can show that these functions belong to the domain of the generator. This is doable by approximation but requires the use of fine estimates to apply the dominated convergence theorem.
\end{remark}

Instead of using this approach based on the infinitesimal generator, we prefer to use a more elementary method, which consists in rewriting  the function $f$ as follows.
\begin{lemma}
\label{lem:rewritten}
For all $t \geq 0$ and $q > 0$, the function $f$ defined in \eqref{def:f} satisfies
\[
  f(t,q) = \int_t^\infty \E\left[ \frac{e^{-\Phi^{-1}(q)\xi_{\rho(s)}}}{\xi_{\rho(s)}} \right] \dd s.
\]
\end{lemma}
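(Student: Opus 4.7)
The plan is to set $\lambda := \Phi^{-1}(q)$, so that $\phi(\lambda) = q$ and, by differentiating $\Phi \circ \Phi^{-1} = \mathrm{id}$, one has $(\Phi^{-1})'(q) = 1/\phi(\lambda)$. The identity to prove therefore reduces to
$$\frac{1}{\phi(\lambda)}\, \E\big[e^{-\lambda \xi_{\rho(t)}}\big] \;=\; \int_t^\infty \E\bigg[\frac{e^{-\lambda \xi_{\rho(s)}}}{\xi_{\rho(s)}}\bigg] \dd s,$$
and I would establish it by a pathwise change of variable followed by the strong Markov property at the stopping time $\rho(t)$.

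For the pathwise step, I would apply Fubini (the integrand is non-negative, so no integrability need be checked a priori) and then substitute $u = \rho(s)$ in the inner integral. By construction, $\rho$ is the right-continuous inverse of the absolutely continuous map $u \mapsto \int_0^u \xi_r \dd r$, which is strictly increasing on $[\rho(0), \mathbf e(\kappa))$; hence $\dd s = \xi_u \dd u$. Using the conventions $\rho(s) = \infty$, $\xi_{\rho(s)} = \infty$ and $e^{-\lambda \xi_{\rho(s)}}/\xi_{\rho(s)} = 0$ for $s \geq \int_0^{\mathbf e(\kappa)} \xi_r \dd r$, the substitution yields the clean pathwise identity
$$\int_t^\infty \frac{e^{-\lambda \xi_{\rho(s)}}}{\xi_{\rho(s)}} \dd s \;=\; \int_{\rho(t)}^{\mathbf e(\kappa)} e^{-\lambda \xi_u} \dd u,$$
the $1/\xi_{\rho(s)}$ factor being exactly cancelled by the Jacobian $\xi_u$.

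It then remains to compute the expectation of the right-hand side. By the strong Markov property at $\rho(t)$, the shifted process $(\xi_{\rho(t)+v} - \xi_{\rho(t)})_{v \geq 0}$ is distributed, conditionally on $\mathcal F_{\rho(t)}$, as an independent copy $\tilde\xi$ of $\xi$, so that taking expectations factorizes
$$\E\bigg[\int_{\rho(t)}^{\mathbf e(\kappa)} e^{-\lambda \xi_u} \dd u\bigg] \;=\; \E\big[e^{-\lambda \xi_{\rho(t)}}\big] \cdot \E\bigg[\int_0^\infty e^{-\lambda \tilde\xi_v} \dd v\bigg].$$
A second application of Fubini together with $\E[e^{-\lambda \xi_v}] = e^{-\phi(\lambda) v}$ (the killing being absorbed into the convention $e^{-\lambda \cdot \infty}=0$) yields $\int_0^\infty e^{-\phi(\lambda)v}\dd v = 1/\phi(\lambda)$, which closes the argument. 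The only real care-point is a consistent handling of the conventions at $\infty$ when $\kappa > 0$ and verifying that the flat region $[0, \rho(0)]$ of the map $u \mapsto \int_0^u \xi_r \dd r$ (which is non-trivial only when $c = 0$ and $\pi$ is finite) lies outside the effective range of integration; no step of the plan presents a serious obstacle.
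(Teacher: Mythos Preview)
Your proposal is correct and follows essentially the same route as the paper: after swapping the expectation and the $s$-integral by Tonelli, you perform the pathwise change of variable $u=\rho(s)$ (which cancels the $1/\xi_{\rho(s)}$ against the Jacobian), and then apply the strong Markov property at $\rho(t)$ together with $\int_0^\infty e^{-\phi(\lambda)v}\,\dd v = 1/\phi(\lambda)$. The paper presents the same three ingredients in the same order, with the same handling of the $\kappa>0$ convention; your explicit mention of the flat region $[0,\rho(0)]$ is a welcome clarification that the paper leaves implicit.
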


\begin{proof} Fix $q>0, t \geq 0$.
We first observe that the change of variable $\rho(s) = u$ leads to
\[
  \int_t^\infty \frac{e^{-\Phi^{-1}(q) \xi_{\rho(s)}}}{\xi_{\rho(s)}} \dd s = \int_{\rho(t)}^{\infty} e^{-\Phi^{-1}(q) \xi_u} \dd u.
\]
We underline that when $\kappa > 0$ the subordinator $\xi$ reaches $\infty$ in finite time and that the identity above is valid by using the convention $e^{-\infty} = 0$.
Then by Fubini's theorem
\[
  \int_t^\infty \E\left[ \frac{e^{-\Phi^{-1}(q)\xi_{\rho(s)}}}{\xi_{\rho(s)}} \right] \dd s =
  \E\left[\int_{\rho(t)}^{\infty} e^{-\Phi^{-1}(q) \xi_u} \dd u \right].
\]
Applying the strong Markov property for Lévy processes to the stopping time $\rho(t)$, we obtain
\begin{align*}
  \E\left[\int_{\rho(t)}^{\infty} e^{-\Phi^{-1}(q) \xi_u} \dd u \right]
  &= \E\left[ e^{-\Phi^{-1}(q) \xi_{\rho(t)}} \int_0^{\infty} e^{-\Phi^{-1}(q) (\xi_{u +\rho(t)} - \xi_{\rho(t)})} \dd u \right]\\
  &= \E\left[ e^{-\Phi^{-1}(q) \xi_{\rho(t)}} \right] \int_0^\infty \E\left[ e^{-\Phi^{-1}(q) \xi_{u}} \right] \dd u.
\end{align*}
Observing that $\E\left[e^{- \Phi^{-1}(q) \xi_t} \right] = e^{-t \phi(\Phi^{-1}(q))}=e^{-t \Phi'(\Phi^{-1}(q))}$, we conclude that
\[
  \int_t^\infty \E\left[ \frac{e^{-\Phi^{-1}(q)\xi_{\rho(s)}}}{\xi_{\rho(s)}} \right] \dd s = \frac{1}{\Phi' \circ \Phi^{-1}(q)}\E\left[ e^{-\Phi^{-1}(q) \xi_{\rho(t)}} \right] = f(t,q). \qedhere
\]
\end{proof}

\medskip

\begin{proof}[\bf Proof of Lemma~\ref{lem:easy}]
Using that for all $a,b > 0$, $x \mapsto x^{a} e^{-bx}$ is bounded on $\R_+$, we obtain easily that the function $q \in (0,\infty) \mapsto f(t,q)$ is infinitely differentiable. Moreover, by Lemma~\ref{lem:rewritten}, for all $t \geq 0, q>0$,
\[
  \int_t^\infty \E\left[ \frac{e^{-\Phi^{-1}(q)\xi_{\rho(s)}}}{\xi_{\rho(s)}} \right] \dd s = f(t,q) < \infty,
\]
therefore $\E[e^{-\Phi^{-1}(q)\xi_{\rho(t)}}/\xi_{\rho(t)}] < \infty$ for almost all $t \geq 0$. Remarking that $t \mapsto e^{-\Phi^{-1}(q)\xi_{\rho(t)}}/\xi_{\rho(t)}$ is non-increasing, $t  \mapsto \E[e^{-\Phi^{-1}(q)\xi_{\rho(t)}}/{\xi_{\rho(t)}}]$ is a non-increasing function. As a consequence, $t \in (0,\infty) \mapsto f(t,q)$ is a convex decreasing function and differentiable almost everywhere with
\[
  \partial_t f(t,q) = -\E\left[ \frac{e^{-\Phi^{-1}(q)\xi_{\rho(t)}}}{\xi_{\rho(t)}} \right]  \quad \text{for almost all $t > 0$}.
\]
We then observe that for all $t,q > 0$, using Fubini's theorem
\begin{align*}
  \int_q^\infty f(t,r) \dd r &= \int_q^\infty (\Phi^{-1})'(r)\E\left[e^{-\Phi^{-1}(r) \xi_{\rho(t)}}\right] \dd r \\
  &= \E\left[ \int_q^\infty (\Phi^{-1})'(r)e^{-\Phi^{-1}(r) \xi_{\rho(t)}} \dd r \right] = \E\left[ \frac{e^{-\Phi^{-1}(q)\xi_{\rho(t)}}}{\xi_{\rho(t)}} \right].
\end{align*}
The function $t \in (0,\infty) \mapsto \E\left[ {e^{-\Phi^{-1}(q)\xi_{\rho(t)}}}/{\xi_{\rho(t)}} \right]$ is therefore continuous, and consequently the function $t \in (0,\infty) \mapsto f(t,q)$ is $\mathcal{C}^1$ for all $q > 0$ and
\[
  \partial_t f(t,q) = -\int_q^\infty f(t,r) \dd r.
\]
Note that this implies that $q \in (0,\infty) \mapsto \partial_t f(t,q)$ is infinitely differentiable. In particular, differentiating the above expression with respect to $q$ shows that $\partial_q \partial_t f(t,q) = f(t,q)$.

Similarly, by Fubini's theorem and the change of variables $\rho(s)=u$,
\begin{eqnarray*}
  \int_t^\infty f(s,q) \dd s &=& (\Phi^{-1})'(q)\mathbb E\left[ \int_t^\infty e^{-\Phi^{-1}(q) \xi_{\rho(s)}} \dd s\right]\\
  &=& (\Phi^{-1})'(q) \E\left[ \int_{\rho(t)}^{\infty} \xi_u e^{-\Phi^{-1}(q) \xi_u}  \dd u \right]
\end{eqnarray*}
for $t,q>0$.
Moreover, we note that for $u> 0$
\begin{eqnarray*}
  \E\left[\xi_{\rho(t)+u} e^{-\Phi^{-1}(q) \xi_{\rho(t)+u}}\right]&=& \E\left[ \left(\xi_{\rho(t)} +(\xi_{\rho(t) + u} - \xi_{\rho(t)}) \right] e^{-\Phi^{-1}(q) \xi_{\rho(t)}} e^{-\Phi^{-1}(q) (\xi_{\rho(t) + u} - \xi_{\rho(t)})} \right]\\
  &=& \E\left[ \xi_{\rho(t)} e^{-\Phi^{-1}(q)\xi_{\rho(t)}} \right] e^{-u \phi(\Phi^{-1}(q))} \\
  & +& \E\left[ e^{-\Phi^{-1}(q) \xi_{\rho(t)}}\right] u \phi'(\Phi^{-1}(q)) e^{-u \phi(\Phi^{-1}(q))}.
\end{eqnarray*}
This leads to
\begin{align*}
    \int_t^\infty f(s,q) \dd s  &= \frac{(\Phi^{-1})'(q)}{\phi(\Phi^{-1}(q))} \E\left[ \xi_{\rho(t)} e^{-\Phi^{-1}(q)\xi_{\rho(t)}} \right] + \phi'(\Phi^{-1}(q)) \frac{(\Phi^{-1})'(q)}{\phi(\Phi^{-1}(q))^2} \E\left[ e^{-\Phi^{-1}(q) \xi_{\rho(t)}} \right]\\
    &= -\partial_q f(t,q),
\end{align*}
since by definition of $f$
\[
  \partial_q f(t,q) = (\Phi^{-1})''(q)\E\left[ e^{-\Phi^{-1}(q) \xi_{\rho(t)}}\right] - (\Phi^{-1})'(q)^2 \E\left[ \xi_{\rho(t)} e^{-\Phi^{-1}(q)\xi_{\rho(t)}} \right]
\]
and $(\Phi^{-1})' = 1/\phi (\Phi^{-1})$. Differentiating with respect to $t$ shows that $\partial_t \partial_q f(t,q) = f(t,q)$ as well.

The continuity at 0 of $t \mapsto f(t,q)$ is obvious, for all $q>0$, by using the expression of $f$ as an integral as noticed in Lemma~\ref{lem:rewritten}. It remains to identify the initial expression for $f(0,q)$ in terms of the measure $L$. Since $\xi_{\rho(0)}=0$ when $c>0$ or $\pi$ is infinite, and $\xi_{\rho(0)}$ is the first jump time of $\xi$ otherwise, one has
$$
\mathbb E[e^{-q \xi_{\rho(0)}}]=\left\{\begin{array}{ll} 1 & \text{when $c>0$ or  $\pi$ is infinite}\\ \frac{1}{\kappa+\pi(0,\infty)}\int_0^{\infty}e^{-qx} \pi(\mathrm dx) & \text{when $c=0$ and $\pi$ is finite.}\end{array}\right.
$$
When $c>0$ or $\pi$ is infinite, recalling \eqref{LK_invPhi}, this indeed yields
$$
f(0,q)=(\Phi^{-1})'(q)=\int_0^{\infty} xe^{-qx} L(\mathrm dx).
$$
When $c=0$ and $\pi$ is finite, we have
$$
f(0,q)=(\Phi^{-1})'(q) \frac{1}{\kappa+\pi(0,\infty)}\int_0^{\infty}e^{-\Phi^{-1}(q)x} \pi(\mathrm dx).
$$
Using that in this case $\Phi(q)=(\kappa+\pi(0,\infty))q-\int_0^{\infty}(1-e^{-qx})\pi(\mathrm dx)/x$, we see that
$$
q=(\kappa+\pi(0,\infty))\Phi^{-1}(q)-\int_0^{\infty}(1-e^{-\Phi^{-1}(q)x})\pi(\mathrm dx)/x,
$$
which, by differentiating, leads us to
$$
f(0,q)=(\Phi^{-1})'(q) \frac{1}{\kappa+\pi(0,\infty)}\int_0^{\infty}e^{-\Phi^{-1}(q)x} \pi(\mathrm dx)=(\Phi^{-1})'(q) -\frac{1}{\kappa+\pi(0,\infty)}.
$$
By \eqref{LK_invPhi} and since $c=0$, this last expression is equal to $\int_0^{\infty} xe^{-qx} L(\mathrm dx)$ as expected.
\end{proof}

Solving the partial differential equation of Lemma~\ref{lem:easy}, we obtain an expression for the Laplace transform of the function $t \mapsto f(t,q)$ for each $q > 0$. This leads to an explicit expression of $f(t,q)$ in terms of the measure $L$, which proves Theorem~\ref{thm:lapTransform}.

\begin{lemma}
\label{lem:laplaceLaplace}
For $\lambda>0$, we write $F_{\lambda} : q \in (0,\infty)\mapsto \int_0^{\infty} e^{-\lambda t} f(t,q) \mathrm dt$. Then for all fixed $q>0$, we have
\begin{equation}
\label{eq:Laplace}
F_{\lambda}(q)=\int_0^{\infty} \left( \int_0^{\infty} e^{-qx -\frac{t}{x}} xL(\mathrm dx)\right) e^{-\lambda t} \mathrm dt, \quad \forall \lambda>0
\end{equation}
which implies that $f(t,q)= \int_0^{\infty} e^{-qx -\frac{t}{x}} xL(\mathrm dx)$ for all $t \geq 0, q>0$. In particular, $f$ is infinitely differentiable on $(0,\infty)^2$.
\end{lemma}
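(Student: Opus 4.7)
The plan is to apply the Laplace transform in the time variable $t$ to the identity $\partial_t \partial_q f = f$ established in Lemma~\ref{lem:easy}, thereby reducing the problem to a first-order linear ODE in $q$ for $F_\lambda$ at each fixed $\lambda > 0$. Solving this ODE explicitly and matching the result with the Laplace transform of the proposed formula then gives \eqref{eq:Laplace}, after which injectivity of the Laplace transform yields the desired expression for $f(t,q)$ pointwise.

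Concretely, setting $G(t,q):=\partial_q f(t,q)$ so that $\partial_t G = f$, I would take the Laplace transform of this equality in $t$ and integrate by parts, the boundary term $e^{-\lambda t} G(t,q)$ vanishing at $+\infty$ (which I expect will follow from the representation of $\partial_q f$ implicit in the proof of Lemma~\ref{lem:easy} together with the fact that $\xi_{\rho(t)}\to\infty$ in probability when $\kappa=0$). This produces
\[
  \lambda F_\lambda'(q) - F_\lambda(q) \;=\; \partial_q f(0,q) \;=\; -\int_0^\infty x^2 e^{-qx}\, L(\dd x),
\]
the last equality by the closed form for $f(0,q)$ supplied by Lemma~\ref{lem:easy}. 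Multiplying by the integrating factor $\lambda^{-1} e^{-q/\lambda}$ and integrating from $q$ to $+\infty$, using that $F_\lambda(q)\to 0$ as $q\to\infty$ (by dominated convergence in the defining integral of $F_\lambda$) and swapping the $r$- and $x$-integrals by Fubini-Tonelli, gives
\[
  F_\lambda(q) \;=\; \frac{1}{\lambda} e^{q/\lambda} \int_q^\infty e^{-r/\lambda} \int_0^\infty x^2 e^{-rx}\, L(\dd x)\, \dd r \;=\; \int_0^\infty \frac{x^2 e^{-qx}}{1+\lambda x}\, L(\dd x).
\]

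On the other hand, an independent Fubini-Tonelli computation evaluates the right-hand side of \eqref{eq:Laplace} as the very same expression:
\[
  \int_0^\infty e^{-\lambda t} \int_0^\infty e^{-qx - t/x}\, x\, L(\dd x)\, \dd t \;=\; \int_0^\infty \frac{x^2 e^{-qx}}{1+\lambda x}\, L(\dd x),
\]
which proves \eqref{eq:Laplace}. Both $t\mapsto f(t,q)$ and $t\mapsto \int_0^\infty e^{-qx - t/x}\, x\, L(\dd x)$ are continuous on $[0,\infty)$ (the latter by dominated convergence), and they share the same Laplace transform for every $\lambda>0$; injectivity of the Laplace transform then forces them to agree pointwise, giving the claimed formula for $f(t,q)$. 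Infinite differentiability of $f$ on $(0,\infty)^2$ follows at once from standard differentiation under the integral sign, the derivatives of every order being locally dominated by the factor $e^{-qx - t/x}$.

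The main obstacle I expect is technical rather than conceptual: justifying the vanishing of the boundary term at $t=+\infty$ in the integration by parts, the limit $F_\lambda(q)\to 0$ as $q\to\infty$, and the two Fubini-Tonelli applications. These rest on a priori integrability of $f$ and $\partial_q f$, which should be available from the integral representation in Lemma~\ref{lem:rewritten} together with the standing assumption $\pi((0,\infty))>0$. Once these integrability and boundary estimates are in place, the ODE step and the identification of Laplace transforms are essentially mechanical.
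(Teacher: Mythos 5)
Your proposal is correct and follows essentially the same route as the paper: Laplace-transform in $t$, use the PDE of Lemma~\ref{lem:easy} plus an integration by parts to get the first-order linear ODE $\lambda F_\lambda'(q) - F_\lambda(q) = -\int_0^\infty x^2 e^{-qx}\,L(\dd x)$, solve it with the vanishing condition as $q \to \infty$, match the result with the Laplace transform of $t \mapsto \int_0^\infty e^{-qx - t/x}\,x\,L(\dd x)$, and conclude by injectivity of the Laplace transform and continuity. The only cosmetic differences are that the paper integrates by parts on $f$ itself (so the boundary term is $f(0,q)$, supplied directly by Lemma~\ref{lem:easy}) and then differentiates in $q$ using $\partial_q\partial_t f = f$, whereas you work with $\partial_q f$ and thus need the extra small check that $\partial_q f(t,q) \to -\int_0^\infty x^2 e^{-qx}\,L(\dd x)$ as $t \to 0$; and the paper obtains \eqref{eq:Laplace} via a change of variables rather than evaluating both Laplace transforms to the common closed form $\int_0^\infty x^2 e^{-qx}(1+\lambda x)^{-1}\,L(\dd x)$.
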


\begin{proof}
Fix $\lambda > 0$ and note that $F_{\lambda}(q)$ is finite for all $q > 0$ since $t \in (0,\infty) \mapsto f(t,q)$ is bounded.
An integration by part gives
\begin{align*}
F_{\lambda}(q)=&  \left[-\frac{e^{-\lambda t}}{\lambda}f(t,q) \right]_{t=0}^{t=\infty} + \frac{1}{\lambda} \int_0^{\infty}e^{-\lambda t} \partial _t f(t,q) \mathrm dt \\
=& \frac{1}{\lambda} \int_0^{\infty} xe^{-qx} L(\mathrm dx)+\frac{1}{\lambda} \int_0^{\infty}e^{-\lambda t} \partial _t f(t,q) \mathrm dt,
\end{align*}
using that $e^{-\lambda t}f(t,q) \rightarrow 0 $ as $t\rightarrow \infty$, and that $f(t,q) \rightarrow  \int_0^{\infty} xe^{-qx} L(\mathrm dx)$ as $t \rightarrow 0$ by Lemma~\ref{lem:easy}.
We then have, for all $q>0$,
\begin{eqnarray*}
\partial_q F_{\lambda}(q)&=& \frac{-1}{\lambda} \int_0^{\infty} x^2e^{-qx} L(\mathrm dx)+ \frac{1}{\lambda} \int_0^{\infty}e^{-\lambda t} \partial _q\partial _t f(t,q) \mathrm dt \\
&=& \frac{-1}{\lambda} \int_0^{\infty} x^2e^{-qx} L(\mathrm dx)+ \frac{1}{\lambda} F_{\lambda}(q),
\end{eqnarray*}
using again Lemma~\ref{lem:easy} (we can apply Lebesgue's dominated convergence theorem since, for all $q_0>0$ and $t>0$, $\sup_{q \geq q_0} |e^{-\lambda t}\partial_q \partial_t  f(t,q)|=\sup_{q \geq q_0} |e^{-\lambda t} f(t,q)|=e^{-\lambda t} f(t,q_0)$, since $(\Phi^{-1})'$ is decreasing and $\Phi^{-1}$ is increasing).

The function $q \in (0,\infty) \mapsto F_{\lambda}(q)$ is therefore a solution to a linear differential equation of the first order. By standard arguments, it follows that
$$
F_{\lambda}(q)=e^{\frac{q}{\lambda}}\int_q^{\infty} e^{-\frac{u}{\lambda}}  \left(\frac{1}{\lambda} \int_0^{\infty} x^2e^{-ux} L(\mathrm dx) \right) \mathrm du.
$$
Using Fubini's theorem and the change of variable $t=(u-q)x/\lambda$, we then get the expression \eqref{eq:Laplace} for all $q,\lambda>0$. The final equation follows from Laplace inversion theorem and the fact that the functions $t\mapsto f(t,q)$ and $t \mapsto \int_0^{\infty} e^{-qx -\frac{t}{x}} xL(\mathrm dx)$ are continuous on $(0,\infty)$.
\end{proof}

\begin{proof}[{\bf Proof of Theorem~\ref{thm:lapTransform}}]
Observe that the definition \eqref{def:f} of $f$ implies that
\[
  f(\Phi(q),t) = \big(\Phi^{-1}\big)'(\Phi(q)) \E\left[ e^{-q \xi_{\rho(t)}} \right] = \frac{1}{\phi(q)}\E\left[ e^{-q \xi_{\rho(t)}} \right].
\]
As a result, Lemma~\ref{lem:laplaceLaplace} yields
$$
\mathbb E\left[e^{-q\xi_{\rho(t)}}\right] = \phi(q) f\left(t,\Phi(q)\right)=\phi(q)  \int_0^{\infty} e^{-\Phi(q)x -\frac{t}{x}} \ xL(\mathrm dx), \quad \forall t\geq 0, q>0,
$$
which completes the proof.
\end{proof}

\subsection{Proof of Proposition~\ref{prop:unifBound}}
\label{sec:bounds}

The proof of Proposition~\ref{prop:unifBound} relies on Theorem~\ref{thm:lapTransform} and Lemma~\ref{lem:renew}. In fact, we use the forthcoming Lemma~\ref{eq:dev}, an immediate corollary of Theorem~\ref{thm:lapTransform}, which rewrites in a more convenient way the expression of the Laplace transform of $\xi_{\rho(t)}$. From this lemma and Lemma~\ref{lem:renew}, we note that for all $t \geq 0,q>0$
\begin{equation}
\label{eq:largebound}
\mathbb E\left[ e^{-q\xi_{\rho(t)}}\right]
\leq  \phi(q) e^{-2 \sqrt{\Phi(q)t}}\int_0^\infty 2u e^{-u^2} \left(\bar V\left(2 \left(\frac{\sqrt{u^4+4u^2 \sqrt{\Phi(q) t}}}{2 \Phi(q)} \right)\right) +   \frac{\mathbbm 1_{\{c=0\}}}{\kappa+\pi(0,\infty)} \right)\mathrm du,
\end{equation}
where we recall that $\bar V(x)=\int_{[0,x]}u L(\mathrm du)$.
It remains to bound $\bar V$ from above to conclude. Since $\phi$ is concave, the function $q \mapsto \phi(q)/q$ is decreasing on $(0,\infty)$ and $\lim_{q \downarrow 0} \phi(q)/q \in (0,\infty]$. Consequently there exists $c_1 \in (0,\infty)$ such that $\Phi^{-1}(q) \leq c_1 \sqrt q$ \ for all $q \in [0,1]$. Then write for  $x \geq 1$,
$$
\bar V(x) = x\int_{[0,x]} \frac{u}{x} L(\mathrm du) \leq 3 x \int_{[0,x]} \left(1-e^{- \frac{u}{x}} \right)  L(\mathrm du) \leq 3 x \Phi^{-1}(1/x) \leq 3c_1 \sqrt x
$$
where we have used for the first inequality that $y \leq 3(1-e^{-y})$ for $y \in [0,1]$ and (\ref{LK_invPhi}) for the second inequality.
This leads to the existence of $c_2 \in (0,\infty)$ such that
$$
\bar V(x) \leq c_2 \left( 1 + \sqrt x\right), \quad \forall x \geq 0,
$$
which in turn implies that for all $u,t \geq 0$, $q>0$,
$$
\bar V\left(2 \left(\frac{\sqrt{u^4+4u^2 \sqrt{\Phi(q) t}}}{2 \Phi(q)} \right)\right)  \leq c_2 \left( 1 + \frac{u+\sqrt{2u} \ \Phi(q)^{1/8}t^{1/8}}{\sqrt{\Phi(q)}}\right).
$$
And then, with (\ref{eq:largebound}), we get the upper bound
$$\mathbb E\left[ e^{-q\xi_{\rho(t)}}\right] \leq c_3\left(1+(\Phi(q))^{-1/2}+(\Phi(q))^{-3/8}t^{1/8} \right) \phi(q)e^{-2 \sqrt{\Phi(q)t}}$$
for some $c_3>0$, as expected.

\section{Asymptotics of \texorpdfstring{$\xi_{\rho}$}{xiRho}}
\label{sec:asymptotic_sub}

Theorem~\ref{thm:lapTransform} will allow us to obtain the asymptotic behavior of $\mathbb E[\exp(-q\xi_{\rho(t)})]$ as $t\rightarrow \infty$, both when $q$ is fixed or when $q=q(t)$ is an appropriate function of $t$ chosen to obtain the scaling limit of $\xi_{\rho(t)}$. Indeed, observe that in the expression
\begin{equation}
\label{eq:Laplacebis}
  \E\left[e^{-q \xi_{\rho(t)}}\right]  = \int_0^\infty e^{-\Phi(q)x-\frac{t}{x}} xL(\dd x)
\end{equation}
the function $x \mapsto e^{-\Phi(q)x-\frac{t}{x}}$ attains its maximum at $x=\sqrt{t/\Phi(q)}$, which is equal to $e^{-2 \sqrt{\Phi(q)t}}$. Therefore, as long as $L$ is regular enough that we may apply Laplace's saddle point method, we expect the leading term in the asymptotic behavior of $\E[e^{-q \xi_{\rho(t)}}]$ to be $e^{-2 \sqrt{\Phi(q)t}}$, with a polynomial correction. The first aim of this section is to prove Theorem~\ref{thm:asympBehavior}, i.e. that under conditions \eqref{hyp:rv} and \eqref{hyp:tech}, an equivalent of $\E[e^{-q \xi_{\rho(t)}}]$ as $t \to \infty$ can be computed explicitly. Then, with very similar computations, we will be able to obtain the scaling limit of $\xi_{\rho(t)}$ as $t \to \infty$ and prove Theorem~\ref{thm:asymp_distribution}.

The starting point of the proofs of Theorem~\ref{thm:asympBehavior} and Theorem~\ref{thm:asymp_distribution} rely on the following alternative expression for the Laplace transform (\ref{eq:Laplacebis}). Let us recall the notation $V(\dd x) = x L(\dd x)$, and for $x \geq 0$
\[
\bar{V}(x) =  V([0,x]) \quad \text{and} \quad  \bar{L}(x) :=L(x,{\infty}).
\]

\begin{lemma}
\label{eq:dev}
For $u, t, q >0$, let
\[
  a_{u,t,q} = \frac{u^2+2 \sqrt{\Phi(q)t}}{2\Phi(q)} \quad \text{and} \quad b_{u,t,q} = \frac{\sqrt{u^4+4u^2 \sqrt{\Phi(q)t}}}{2\Phi(q)},
\]
\emph{(}note that $a_{u,t,q}>b_{u,t,q}$\emph{)}\footnote{More precisely, we have $(2\Phi(q) a_{u,t,q})^2 - (2 \Phi(q)b_{u,t,q})^2 = 4 \Phi(q) t$. In particular, $2 \Phi(q)(a_{u,t,q} - b_{u,t,q}) \to \infty$ as long as $4 \Phi(q)t \to \infty$, uniformly in $u$.}.
We then have
\begin{equation*}
  \E\left[ e^{-q\xi_{\rho(t)}}\right]
  = \phi(q) e^{-2 \sqrt{\Phi(q)t}}\int_0^\infty 2u e^{-u^2} \left(  \bar V \left(a_{u,t,q}+b_{u,t,q}\right) -  \bar V \left(a_{u,t,q}-b_{u,t,q}\right) \right)  \dd u.
\end{equation*}
\end{lemma}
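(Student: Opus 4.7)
The plan is to start from the expression given by Theorem~\ref{thm:lapTransform}, namely
\[
\E\bigl[e^{-q\xi_{\rho(t)}}\bigr] = \phi(q)\int_0^{\infty} e^{-\Phi(q)x - t/x}\, V(\dd x),
\]
where $V(\dd x) = xL(\dd x)$, and to perform a saddle-point style rewriting around the minimizer $x_0 = \sqrt{t/\Phi(q)}$ of the function $x\mapsto \Phi(q)x + t/x$. The first step is to complete the square by writing
\[
\Phi(q)x + \frac{t}{x} = 2\sqrt{\Phi(q)t} + \bigl(\sqrt{\Phi(q)x} - \sqrt{t/x}\bigr)^{2},
\]
so that the exponential factors cleanly as $e^{-\Phi(q)x-t/x}=e^{-2\sqrt{\Phi(q)t}}e^{-h(x)^2}$ with $h(x) := \bigl|\sqrt{\Phi(q)x}-\sqrt{t/x}\bigr|\geq 0$.

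The second step is to use the elementary identity $e^{-h^{2}} = \int_{h}^{\infty} 2u\, e^{-u^{2}}\, \dd u$ for $h \geq 0$, and then apply Fubini's theorem to exchange the $u$-integral with the $V$-integral:
\[
\int_0^\infty e^{-h(x)^2}\, V(\dd x) = \int_0^\infty 2u\,e^{-u^2} \Bigl(\int_{\{x\,:\,h(x)<u\}} V(\dd x)\Bigr)\,\dd u.
\]
It remains to describe the sublevel set $\{x > 0 : h(x) < u\}$ for each fixed $u>0$. Since the function $x \mapsto \sqrt{\Phi(q)x}-\sqrt{t/x}$ is strictly increasing on $(0,\infty)$ and vanishes at $x_0$, the set in question is an open interval $(x_{-}(u),x_{+}(u))$ whose endpoints solve $\sqrt{\Phi(q)x}-\sqrt{t/x} = \pm u$. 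Setting $y=\sqrt{x}$ turns this into the quadratic $\sqrt{\Phi(q)}\, y^{2} \mp u y - \sqrt{t} = 0$, with positive root $y_{\pm} = \bigl(\pm u + \sqrt{u^{2}+4\sqrt{\Phi(q)t}}\bigr)/(2\sqrt{\Phi(q)})$.

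Squaring and simplifying gives precisely $x_{\pm}(u) = a_{u,t,q} \pm b_{u,t,q}$ with the constants defined in the statement, so that $\int_{\{h(x)<u\}} V(\dd x) = \bar V(a_{u,t,q}+b_{u,t,q}) - \bar V(a_{u,t,q}-b_{u,t,q})$, which yields the lemma after multiplying through by $\phi(q)e^{-2\sqrt{\Phi(q)t}}$. The main, and essentially only, technical step is the algebraic identification of $x_{\pm}(u)$ with $a_{u,t,q}\pm b_{u,t,q}$; the exchange of integration order is harmless because the integrand is non-negative. Potential issues with atoms of $V$ at the boundary points $x_{\pm}(u)$ do not affect the identity, since the set of $u$ for which $a_{u,t,q}\pm b_{u,t,q}$ equals a given atom of $V$ is at most countable, hence negligible for the Lebesgue integration in $u$.
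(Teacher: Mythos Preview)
Your proof is correct and follows essentially the same route as the paper: complete the square to write $\Phi(q)x+t/x=2\sqrt{\Phi(q)t}+(\sqrt{\Phi(q)x}-\sqrt{t/x})^2$, use $e^{-h^2}=\int_h^\infty 2u\,e^{-u^2}\,\dd u$, apply Fubini (non-negative integrand), and identify the sublevel set as the interval $[a_{u,t,q}-b_{u,t,q},\,a_{u,t,q}+b_{u,t,q}]$, handling possible atoms of $V$ at the endpoints exactly as you do. Your explicit derivation of $x_\pm(u)$ via the substitution $y=\sqrt{x}$ is slightly more detailed than the paper's version, but the argument is the same.
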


\begin{proof}
Since $\Phi(q)x + \frac{t}{x} = 2 \sqrt{\Phi(q)t} + \left(\sqrt{\Phi(q)x}-\sqrt{\frac{t}{x}}\right)^2$, we can rewrite (\ref{eq:Laplacebis}) as
\begin{align*}
  \frac{\E\left[ e^{-q\xi_{\rho(t)}}\right]}{\phi(q) e^{-2 \sqrt{\Phi(q)t}}} &= \int_0^\infty e^{-\left(\sqrt{\Phi(q)x}-\sqrt{\frac{t}{x}}\right)^2} x L(\dd x)\\
  &= \int_0^{\infty} \int_{\left|\sqrt{\Phi(q)x}-\sqrt{\frac{t}{x}}\right|}^{\infty}2ue^{-u^2} \mathrm du \ xL(\mathrm dx)\\
  &=  \phi(q)e^{-2 \sqrt{\Phi(q)t}}\int_0^{\infty} 2ue^{-u^2} \mathrm du \left(\int_0^{\infty} \mathbbm 1_{\Big|\sqrt{\Phi(q)x}-\sqrt{\frac{t}{x}}\Big| \leq u} xL(\mathrm dx)\right),
\end{align*}
by Fubini's theorem. Note then that
\[
   \left\{x \in \R : \left|\sqrt{\Phi(q)x}-\sqrt{\frac{t}{x}}\right| \leq u\right\} = \big[a_{u,t,q}-b_{u,t,q},a_{u,t,q}+b_{u,t,q}\big]
\]
and that the Lebesgue measure of $\big\{u>0:V(\{a_{u,t,q}-b_{u,t,q}\})>0\big\}$ is null to complete the proof.
\end{proof}

\medskip

The function $\bar V$  is regularly varying at $\infty$ under \eqref{hyp:rv} which is important but not sufficient for our purpose, as we have to control the increments of $\bar V$. To that end, we apply a result of Doney and Rivero \cite{DR13} in the next section, which relies on \eqref{hyp:tech}. We then complete the proofs of Theorem~\ref{thm:asympBehavior} in Section~\ref{sec:prooffix} and Theorem~\ref{thm:asymp_distribution} in Section~\ref{sec:proofscaling}.

\subsection{Preliminaries: regular variation and asymptotic behavior of the increments of \texorpdfstring{$\bar V$}{V}}
\label{sec:incrementsofV}

We first study the regular variation of $\bar V$ and $\bar{L}$.

\begin{lemma}
\label{lem:varVL}
If $\phi$ is regularly varying at $0$ with index $\gamma \in [0,1]$, the function $\bar V$ is regularly varying at $\infty$ with index $\frac{\gamma}{\gamma+1} \in [0,1/2]$. More precisely:
\begin{enumerate}
\item[\emph{(i)}] When $\gamma>0$, $\bar L$ is also regularly varying at $\infty$, with index  $\frac{-1}{\gamma+1} \in (-1,-1/2]$, and
$$
\bar L(x) \underset{x \rightarrow \infty}{\sim} \frac{1}{\Gamma\left(\frac{\gamma}{1+\gamma}\right)} \cdot  \Phi^{-1}\left(\frac{1}{x}\right) \underset{x \rightarrow \infty}{\sim}  \frac{\gamma \bar V(x)}{x}
$$
\item[\emph{(ii)}] When $\gamma=0$, $\bar L(x) \ll \bar V(x)/x$ and we still have the above behavior for $\bar V$ when moreover $\kappa=0$:
$$
\bar V(x) \underset{x \rightarrow \infty}{\sim}  x \Phi^{-1}\left(\frac{1}{x}\right)
$$
whereas when $\kappa>0$,
$$
\bar V(x)  \underset{x \rightarrow \infty}{\rightarrow} \frac{1}{\kappa}- \frac{\mathbbm 1_{\{c=0\}}}{\kappa +\pi(0,\infty)}.
$$
\end{enumerate}
\end{lemma}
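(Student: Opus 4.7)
The plan is to translate the regular variation of $\phi$ at $0$ into tail information on $L$ via a Karamata--Tauberian argument. The two main ingredients are the integration-by-parts identity
$$\int_0^\infty e^{-qy}\bar L(y)\,\dd y \;=\; \frac{\Phi^{-1}(q)}{q} - \frac{\mathbbm 1_{\{c=0\}}}{\kappa+\pi(0,\infty)},$$
obtained from \eqref{LK_invPhi} by writing $1-e^{-qx}=q\int_0^x e^{-qy}\,\dd y$ and swapping the integrals, together with the Fubini identity
$$\bar V(x) \;=\; \int_0^x \bar L(y)\,\dd y - x\bar L(x).$$
Karamata's theorem applied to $\Phi(q)=\int_0^q \phi(s)\,\dd s$ gives regular variation of $\Phi$ at $0$ with index $\gamma+1$, hence of $\Phi^{-1}$ at $0$ with index $1/(\gamma+1)$. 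This will drive everything that follows.

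For (i), I first observe that $\gamma>0$ forces $\kappa=0$ (otherwise $\phi(0)=\kappa>0$ is incompatible with positive-index regular variation), and that $\Phi^{-1}(q)/q$, being regularly varying at $0$ with negative index $-\gamma/(\gamma+1)$, blows up and therefore dominates the constant drift, so $\int_0^\infty e^{-qy}\bar L(y)\,\dd y \sim \Phi^{-1}(q)/q$ as $q\to 0$. Karamata's Tauberian theorem then yields
$$\int_0^x \bar L(y)\,\dd y \;\sim\; \frac{x\,\Phi^{-1}(1/x)}{\Gamma\bigl(\tfrac{\gamma}{\gamma+1}+1\bigr)},$$
and since $\bar L$ is non-increasing and the right-hand side is regularly varying with positive index, the monotone density theorem transfers this to $\bar L(x)\sim \Phi^{-1}(1/x)/\Gamma(\gamma/(\gamma+1))$. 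Substituting both asymptotics into the Fubini identity and simplifying through $\Gamma(\alpha+1)=\alpha\Gamma(\alpha)$ with $\alpha=\gamma/(\gamma+1)$ produces $\bar V(x)\sim x\bar L(x)/\gamma$, which also yields the claimed regular variation of $\bar V$ with index $\gamma/(\gamma+1)$.

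For (ii), I split according to the sign of $\kappa$. When $\kappa>0$, the computation already recorded in Section~\ref{sec:L} gives $\int_0^\infty x L(\dd x)=1/\kappa - \mathbbm 1_{\{c=0\}}/(\kappa+\pi(0,\infty))<\infty$, so $\bar V$ converges to this constant, while the crude bound $x\bar L(x)\leq \int_x^\infty y\,L(\dd y)\to 0$ shows $\bar L(x)\ll \bar V(x)/x$. When $\kappa=0$ and $\gamma=0$, the quotient $\Phi^{-1}(q)/q$ is slowly varying at $0$ and diverges (since $\Phi(q)/q\to \phi(0)=0$), so the drift is again negligible and Karamata's Tauberian theorem in the slowly varying regime gives $\int_0^x\bar L(y)\,\dd y\sim x\Phi^{-1}(1/x)$; the refined form of the monotone density theorem for slowly varying integrals yields $x\bar L(x)=o\bigl(\int_0^x\bar L(y)\,\dd y\bigr)$, whence $\bar V(x)\sim x\Phi^{-1}(1/x)$ and $\bar L(x)\ll \bar V(x)/x$. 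The most delicate point is precisely this $\gamma=0$ slowly varying case, where the monotone density theorem provides only the qualitative statement $x u(x)=o(U(x))$ rather than a sharp asymptotic for the density; the remainder of the argument amounts to bookkeeping of the gamma factors and standard manipulations of regularly varying functions.
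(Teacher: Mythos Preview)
Your proof is correct and follows the same Tauberian/monotone-density strategy as the paper, the only difference being that you recover $\bar V$ from $\bar L$ via the identity $\bar V(x)=\int_0^x\bar L(y)\,\dd y-x\bar L(x)$, whereas the paper applies Karamata's Tauberian theorem directly to the Laplace--Stieltjes transform $\int_0^\infty e^{-qx}V(\dd x)=(\Phi^{-1})'(q)-\tfrac{\mathbbm 1_{\{c=0\}}}{\kappa+\pi(0,\infty)}$, thus obtaining the asymptotic of $\bar V$ without a subtraction. Your subtraction step is legitimate because for $\gamma>0$ the two terms are asymptotic to $c_1\,x\Phi^{-1}(1/x)$ and $c_2\,x\Phi^{-1}(1/x)$ with $c_1=1/\Gamma(\alpha+1)>c_2=1/\Gamma(\alpha)$ (here $\alpha=\gamma/(\gamma+1)<1$), so no cancellation occurs; it would be worth making this explicit in a final write-up.
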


These results on the links between the asymptotics of the tail of the L\'evy measure of a subordinator and its Laplace exponent are classical. We prove them quickly for the sake of completeness and to integrate the function $\bar V$.
Note that  $x \Phi^{-1}\left(\frac{1}{x}\right) \rightarrow \frac{1}{\kappa}$ as $x \rightarrow \infty$ when $\kappa>0$, so in general the two assertions of (ii) cannot be merge.

\begin{proof}
We rewrite \eqref{LK_invPhi} as
\begin{equation}
  \label{eq:rephrased}
\Phi^{-1}(q)=\frac{q \mathbbm 1_{\{c=0\}}}{\kappa+\pi(0,\infty)}+\int_0^q \left(\int_0^{\infty} e^{-xu}V(\mathrm dx) \right) \mathrm du=\frac{q \mathbbm 1_{\{c=0\}}}{\kappa+\pi(0,\infty)}+q\int_0^{\infty}e^{-qu} \bar L(u) \mathrm du,
\end{equation}
using Fubini's theorem.

We first assume that $\kappa = 0$. In this case, $\Phi(q)/q \to 0$ as $q \to 0$, therefore $\Phi^{-1}(q) \gg q$ when $q \rightarrow 0$. Consequently,
\begin{equation}
\label{eq:miseenplace}
  \Phi^{-1}(q) \underset{q \rightarrow 0}{\sim}  \int_0^q \left(\int_0^{\infty} e^{-xu}V(\mathrm dx) \right) \mathrm du.
\end{equation}
As $\phi$ is regularly varying at $0$ with index $\gamma$, $\Phi^{-1}$ is regularly varying at $0$ with index $(\gamma+1)^{-1}$, therefore so is the integral on the right-hand side. Since the function $u \mapsto \int_0^{\infty} e^{-xu}V(\mathrm dx)$ is monotone (decreasing), it is also regularly varying at $0$, with index $(\gamma+1)^{-1}-1$, by the monotone density theorem (\cite[Theorem 1.7.2b]{BGT}) and
$$
\int_0^{\infty} e^{-xq}V(\mathrm dx)  \underset{q \rightarrow 0}{\sim} \frac{q^{-1}}{\gamma+1} \int_0^q \left(\int_0^{\infty} e^{-xu}V(\mathrm dx) \right) \mathrm du  \underset{q \rightarrow 0}{\sim} \frac{q^{-1}}{\gamma+1} \Phi^{-1}(q).
$$
It is then sufficient to use Karamata's Tauberian theorem (\cite[Theorem 1.7.1]{BGT}) to deduce from this the regular variation of $\bar V$ and the equivalence settled in the statements (i) and (ii) of the lemma, regarding $\bar V$.

The behavior of $\bar{L}$ is obtained in a similar fashion. Using \eqref{eq:rephrased} and that $\Phi^{-1}(q) \gg q$ as $q \to 0$, we have
\[
  \Phi^{-1}(q)\underset{q \rightarrow 0}{\sim}  q \int_0^{\infty}e^{-qu} \bar{L}(u) \dd u.
\]
Hence, applying Karamata's tauberian theorem we obtain that
$$
 \int_0^{x} \bar L(u) \mathrm du \underset{x \rightarrow \infty}{\sim} \frac{1}{\Gamma\left(1+\frac{\gamma}{1+\gamma}\right)} \cdot  x \Phi^{-1}(1/x),
$$
those two functions being regularly varying with index $\gamma/(1+\gamma)$. And then applying the monotone density theorem to get the remaining parts of assertions (i) and (ii), still when $\kappa=0$.

We now turn to the case $\kappa > 0$. In this situation
\[
  \lim_{x \to \infty} \bar{V}(x) = \int_0^\infty y L(y) \dd y =\frac{1}{\kappa}- \frac{\mathbbm 1_{\{c=0\}}}{\kappa +\pi(0,\infty)},
\]
as noticed at the end of Section~\ref{sec:L}.
\end{proof}

To get some information on the asymptotic behavior of the increments of $\bar V$, or equivalently of $\bar L$, we recall from Section~\ref{sec:Laplace} that the measure $L$ can be interpreted as the image measure of the excursion measure, noted $n$, of a spectrally negative Lévy process $X$ with Laplace exponent $\Phi$ reflected below its maximum by the application $\zeta$ associating to an excursion its lifetime. In other words, we have $\bar L(x)=n(\zeta>x)$. We can therefore apply a result of Doney and Rivero \cite{DR13,DR16_err} to obtain the following local properties.

\begin{lemma}
\label{lem:increments}
Assume \eqref{hyp:rv} and \eqref{hyp:tech}.
\begin{enumerate}
\item[\emph{(i)}] For any $\Delta(x)>0$ such that  $\Delta(x)=o(x)$ as $x \rightarrow \infty$,
$$
\sup_{\Delta \in (0,\Delta(x)]} \left|\frac{x}{\bar L(x)} \cdot \frac{\bar L(x)-\bar L(x+\Delta)}{\Delta} - \frac{1}{1+\gamma}\right| \underset{x \rightarrow \infty}\rightarrow 0.
$$
\item[\emph{(ii)}] There exists $x_0>0$ such that
$$
\sup_{\Delta>0} \left|\bar L(x)-\bar L(x+\Delta) \right| \leq \frac{2 \Delta \bar L(x)}{x} \qquad \text{for all } x \geq x_0.
$$
\end{enumerate}
\end{lemma}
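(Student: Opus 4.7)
The plan is to invoke the main result of Doney--Rivero \cite{DR13,DR16_err} on the asymptotic of the excursion lifetime measure of a spectrally negative L\'evy process. Recall from Section~\ref{sec:Laplace} that $\bar L(x)=n(\zeta>x)$, where $n$ is the excursion measure away from $0$ of the reflected process $S-X$ and $\zeta$ is the excursion length. Under \eqref{hyp:rv} the Laplace exponent $\Phi$ of $X$ is regularly varying at $0$ with index $\gamma+1\in(1,2]$, so $X$ lies in the domain of attraction of a $(\gamma+1)$-stable process. The three conditions gathered in \eqref{hyp:tech} are precisely those needed for $X_1$ to be strongly non-lattice in the sense of Stone (the integrability condition on $\pi(\dd x)/x$ rules out the compound Poisson case, while the two integral conditions on $\mathrm{Re}\,\Phi(ix)$ give the Fourier-analytic control needed for the local limit theorem). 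Under these assumptions, Doney--Rivero show that $n(\zeta\in\dd y)$ admits a density $g$ on $(0,\infty)$ with
$$g(y)\underset{y\to\infty}{\sim}\frac{1}{\gamma+1}\cdot\frac{\bar L(y)}{y},$$
which is the key input and fits with the monotone-density heuristic applied to Lemma~\ref{lem:varVL}.

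For (i), I would write $\bar L(x)-\bar L(x+\Delta)=\int_x^{x+\Delta}g(y)\,\dd y$ and exploit the uniform convergence theorem for regularly varying functions (\cite[Thm.~1.5.2]{BGT}) on the interval $[x,x+\Delta(x)]$. Since $\Delta(x)=o(x)$, both $yg(y)/\bar L(y)\to 1/(\gamma+1)$ and $\bar L(y)/\bar L(x)\to 1$ uniformly on this interval. Dividing the integral by $\Delta\cdot\bar L(x)/x$ and taking the supremum over $\Delta\in(0,\Delta(x)]$ then yields the claimed convergence.

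For (ii), I would combine (i) with a crude bound in the regime of large increments. Applied with the choice $\Delta(x)=x/2$, (i) produces some $x_0$ such that for all $x\geq x_0$ and all $\Delta\in(0,x/2]$ one has $\bar L(x)-\bar L(x+\Delta)\leq 2\Delta\bar L(x)/x$ (the factor $2$ absorbs the $o(1)$ around the correct constant $1/(\gamma+1)\leq 1$). For $\Delta>x/2$, the trivial bound
$$\bar L(x)-\bar L(x+\Delta)\leq \bar L(x)\leq \frac{2\Delta\bar L(x)}{x}$$
takes over, and the two ranges patch.

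The main obstacle is really the invocation of Doney--Rivero: one has to check that the three conditions gathered in \eqref{hyp:tech} match exactly the strong non-lattice assumption of \cite{DR13}, taking into account the erratum \cite{DR16_err}, and that the local limit theorem stated there is indeed phrased in terms of the excursion measure of $S-X$ rather than some dual or normalized analogue. Once that correspondence is in place, the remaining steps are routine bookkeeping with regularly varying functions.
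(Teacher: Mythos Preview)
Your overall strategy matches the paper's: both rely on Doney--Rivero \cite{DR13,DR16_err} for the local behavior of $\bar L$, then propagate this to arbitrary increments. There are, however, two genuine differences worth flagging.

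First, what you extract from Doney--Rivero is stronger than what the paper uses. The paper only invokes the statement that for any \emph{fixed} $\Delta_0>0$,
\[
\sup_{\Delta\in(0,\Delta_0]}\left|\frac{x}{\bar L(x)}\cdot\frac{\bar L(x)-\bar L(x+\Delta)}{\Delta}-\frac{1}{1+\gamma}\right|\xrightarrow[x\to\infty]{}0,
\]
which is the usual form of a local limit theorem for the excursion length. You instead assume the existence of a density $g$ with the pointwise asymptotic $g(y)\sim\bar L(y)/((1+\gamma)y)$. That is a strictly stronger conclusion, and whether \cite{DR13} actually delivers it (rather than the integrated version) is exactly the verification you defer in your last paragraph. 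If only the bounded-$\Delta$ statement is available, your integral representation $\int_x^{x+\Delta}g(y)\,\dd y$ is not justified and your argument for (i) collapses for $\Delta$ unbounded.

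The paper circumvents this by a telescoping argument: it chops $(x,x+\Delta]$ into unit intervals, applies the bounded-$\Delta$ estimate on each, and uses monotonicity of $y\mapsto\bar L(y)/y$ for the upper bound and the uniform convergence theorem applied to $\bar L(x+\Delta(x))/(x+\Delta(x))\cdot x/\bar L(x)\to 1$ for the lower bound. This is more pedestrian but avoids any density claim.

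Second, your route to (ii) is cleaner than the paper's. The paper derives (ii) from the same telescoping decomposition (in fact proving it \emph{before} completing (i)), whereas you obtain it from (i) with $\Delta(x)=x/2$ plus the trivial bound $\bar L(x)-\bar L(x+\Delta)\le\bar L(x)\le 2\Delta\bar L(x)/x$ when $\Delta>x/2$. This split is perfectly valid and arguably more transparent --- though note that it makes (ii) logically depend on (i), so you cannot use (ii) as an input to (i) the way the paper implicitly does.

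In summary: if you can confirm the density version of Doney--Rivero, your proof is correct and slightly more streamlined; otherwise you need the paper's telescoping bootstrap to pass from bounded to growing $\Delta$.
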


\begin{proof}
By Theorem 1 (iii) in \cite{DR13} and the erratum \cite{DR16_err}, under the hypotheses \eqref{hyp:rv} and \eqref{hyp:tech},
$$
\sup_{\Delta \in (0,\Delta_0]} \left|\frac{x}{\bar L(x)} \cdot \frac{\bar L(x)-\bar L(x+\Delta)}{\Delta} - \frac{1}{1+\gamma}\right| \underset{x \rightarrow \infty}\rightarrow 0
$$
for any fixed $\Delta_0>0$ (note that with the notation $\alpha,\rho$ of \cite{DR13}, we have here $\alpha=1+\gamma$ and $\bar \rho=\frac{1}{1+\gamma}$).
Now fix $\varepsilon>0$ and let $x_{\varepsilon}>0$ be such that for $x \geq x_{\varepsilon}$
$$
\sup_{\Delta \in (0,1]} \left|\frac{x}{\bar L(x)} \cdot \frac{\bar L(x)-\bar L(x+\Delta)}{\Delta} - \frac{1}{1+\gamma}\right| \leq \varepsilon.
$$
Then for any $\Delta>0$ and all  $x \geq x_{\varepsilon}$
\begin{eqnarray*}
\bar L(x)-\bar L(x+\Delta) &=& \sum_{k=0}^{\lfloor\Delta \rfloor-1} \left(\bar L(x+k)-\bar L(x+k+1)\right)+\bar L(x+\lfloor\Delta \rfloor)-\bar L(x+\Delta) \\
& \leq& \left( \frac{1}{1+\gamma} + \varepsilon\right)  \left( \sum_{k=0}^{\lfloor\Delta \rfloor-1}  \frac{\bar L(x+k)}{x+k} +  \frac{\bar L(x+\lfloor\Delta \rfloor)}{x+\lfloor\Delta \rfloor} \cdot (\Delta-\lfloor\Delta \rfloor)\right) \\
& \leq& \left( \frac{1}{1+\gamma} + \varepsilon\right) \frac{\bar L(x)}{x} \cdot \Delta
\end{eqnarray*}
where for the last inequality we use that $x \mapsto \bar L(x)/x$ is decreasing. This implies the point (ii) of the lemma and is a first step in the proof of the point (i). To complete the proof of the uniform convergence on intervals of the form $(0,\Delta(x)]$ with $\Delta(x)=o(x)$, we note, similarly as above, that for any $\Delta \in (0,\Delta(x)]$ and all  $x \geq x_{\varepsilon}$,
\begin{eqnarray*}
\bar L(x)-\bar L(x+\Delta) &\geq&  \left( \frac{1}{1+\gamma} - \varepsilon\right)  \left( \sum_{k=0}^{\lfloor\Delta \rfloor-1}  \frac{\bar L(x+k)}{x+k} +  \frac{\bar L(x+\lfloor\Delta \rfloor)}{x+\lfloor\Delta \rfloor} \cdot (\Delta-\lfloor\Delta \rfloor)\right) \\
 &\geq&
 \left( \frac{1}{1+\gamma} - \varepsilon\right) \frac{\bar L(x+\Delta(x))}{x+\Delta(x)} \cdot \Delta.
\end{eqnarray*}
Now, since the function $x \mapsto \bar L(x)/x$ is regularly varying as $x \rightarrow \infty$ with index $-\frac{1}{1+\gamma}-1$, by the Uniform Convergence Theorem (\cite[Theorem 1.5.2]{BGT}), for all $x$ large enough
$$
 \frac{\bar L(x+\Delta(x))}{x+\Delta(x)} \cdot \frac{x}{\bar L(x)} \geq \frac{1}{\left( 1+\frac{\Delta(x)}{x}\right)^{\frac{1}{1+\gamma}+1}}-\varepsilon.
$$
Taking $x$ larger if necessary, we may assume that this last lower bound is in turn larger than $1-2 \varepsilon$  since $\Delta(x)=o(x)$. So finally we have that for all $x$ large enough and then all $\Delta \in (0,\Delta(x)]$,
$$
\bar L(x)-\bar L(x+\Delta) \geq  \left( \frac{1}{1+\gamma} - \varepsilon\right) (1-2\varepsilon)  \frac{\bar L(x)}{x} \cdot \Delta
$$
as expected.
\end{proof}

This readily yields the following uniform behavior of the increments of $V$.

\newpage 

\begin{corollary}
\label{cor:increments}
Assume \eqref{hyp:rv} and \eqref{hyp:tech}.
\begin{enumerate}
\item[\emph{(i)}]
For any $\Delta(x)>0$ such that  $\Delta(x)=o(x)$ as $x \rightarrow \infty$,
$$
\sup_{\Delta \in (0,\Delta(x)]} \left|\frac{\bar V(x+\Delta)-\bar V(x)}{\Delta \bar L(x)} - \frac{1}{1+\gamma}\right| \underset{x \rightarrow \infty}\rightarrow 0.
$$
\item[\emph{(ii)}] There exists $x_1>0$ such that
$$
\sup_{\Delta>0} \left|\bar V(x)-\bar V(x+\Delta) \right| \leq 2 \Delta \bar L(x) +2 \frac{\bar L(x)}{x} \Delta^2 \quad \text{for all } x \geq x_1.
$$
\end{enumerate}
\end{corollary}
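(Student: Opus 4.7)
The plan is to reduce both parts of Corollary~\ref{cor:increments} to Lemma~\ref{lem:increments} via a single sandwich identity that translates increments of $\bar V$ into increments of $\bar L$. First I would use that $V(\dd u) = u\, L(\dd u)$ to write
\[
\bar V(x+\Delta) - \bar V(x) \;=\; \int_{(x,\, x+\Delta]} u\, L(\dd u),
\]
and then, since $u \in (x, x+\Delta]$ throughout the domain of integration and $L((x,x+\Delta]) = \bar L(x) - \bar L(x+\Delta)$, bound the integrand by $x$ and $x+\Delta$ respectively to obtain
\[
x\bigl(\bar L(x) - \bar L(x+\Delta)\bigr) \;\leq\; \bar V(x+\Delta) - \bar V(x) \;\leq\; (x+\Delta)\bigl(\bar L(x) - \bar L(x+\Delta)\bigr).
\]

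For assertion (i), I would divide by $\Delta\, \bar L(x)$. The lower bound is $\tfrac{x}{\bar L(x)} \cdot \tfrac{\bar L(x) - \bar L(x+\Delta)}{\Delta}$, which by Lemma~\ref{lem:increments}(i) converges to $\tfrac{1}{1+\gamma}$ uniformly in $\Delta \in (0,\Delta(x)]$. The upper bound equals the very same quantity multiplied by $1 + \Delta/x$, and since $\Delta(x) = o(x)$ this extra factor tends to $1$ uniformly in the same range. A squeeze argument then delivers the claimed uniform convergence to $\tfrac{1}{1+\gamma}$.

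For assertion (ii), I would combine the upper part of the sandwich with Lemma~\ref{lem:increments}(ii): for $x \geq x_0$ and every $\Delta > 0$,
\[
\bar V(x+\Delta) - \bar V(x) \;\leq\; (x+\Delta) \cdot \frac{2\Delta\, \bar L(x)}{x} \;=\; 2\Delta\, \bar L(x) \,+\, 2 \frac{\bar L(x)}{x} \Delta^2,
\]
which is exactly the desired inequality, with $x_1 := x_0$. I do not expect any genuine obstacle: once the sandwich is in place, the corollary is an essentially mechanical consequence of Lemma~\ref{lem:increments}. The only point worth watching is that the uniformity range $(0,\Delta(x)]$ is indeed preserved by the auxiliary factor $1 + \Delta/x$, which it is because $\Delta(x)/x \to 0$.
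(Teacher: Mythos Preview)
Your proof is correct. Both you and the paper reduce to Lemma~\ref{lem:increments} through the relation $V(\dd u)=u\,L(\dd u)$, but you take a slightly cleaner path. The paper writes down the exact identity
\[
\bar V(x+\Delta)-\bar V(x)=x\bigl(\bar L(x)-\bar L(x+\Delta)\bigr)+\int_0^{\Delta}\bigl(\bar L(x+y)-\bar L(x+\Delta)\bigr)\,\dd y,
\]
and then handles the two terms separately: the first gives the main contribution $\tfrac{1}{1+\gamma}$ via Lemma~\ref{lem:increments}(i), while the integral is shown to be $O(\Delta(x)/x)\cdot \Delta\bar L(x)$ and hence negligible; for (ii) the same identity is combined with Lemma~\ref{lem:increments}(ii). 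Your two-sided sandwich $x\bigl(\bar L(x)-\bar L(x+\Delta)\bigr)\le \bar V(x+\Delta)-\bar V(x)\le (x+\Delta)\bigl(\bar L(x)-\bar L(x+\Delta)\bigr)$ bypasses the integral term entirely: the lower and upper bounds differ only by the factor $1+\Delta/x$, which is uniformly $1+o(1)$ on $(0,\Delta(x)]$, so (i) follows by squeezing, and (ii) drops out of the upper bound in one line. The paper's identity is sharper in principle, but for the stated corollary your coarser sandwich loses nothing and is more direct.
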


\begin{proof}
Recall that $V(\mathrm dx)=x L(\mathrm dx)$, so that
\begin{equation}
\label{eq:VL}
\bar V(x+\Delta)-\bar V(x)=x\left( \bar L(x)-\bar L(x+\Delta)\right)+\int_0^{\Delta} \left(\bar L(x+y)- \bar L(x+\Delta)\right) \mathrm dy.
\end{equation}
From Lemma~\ref{lem:increments} (i), we have on the one hand that
$$
\sup_{\Delta \in (0,\Delta(x)]} \left|\frac{x\left( \bar L(x)-\bar L(x+\Delta)\right)}{\Delta \bar L(x)}- \frac{1}{1+\gamma}\right| \underset{x \rightarrow \infty}\rightarrow 0
$$
and on the other hand that for $x$ large enough and then all $\Delta \in (0,\Delta(x)]$
\begin{eqnarray*}
\left|\int_0^{\Delta} \left(\bar L(x+y)- \bar L(x+\Delta)\right) \mathrm dy \right| &\leq&  \frac{2}{1+\gamma} \int_0^{\Delta} \frac{\bar L(x+y)}{x+y} (\Delta-y) \mathrm dy \\
&\leq &  \frac{1}{1+\gamma}  \cdot \frac{\bar L(x)}{x} \cdot \left(\Delta\right)^2,
\end{eqnarray*}
hence
$$
\sup_{\Delta \in (0,\Delta(x)]} \frac{\left|\int_0^{\Delta} \left(\bar L(x+y)- \bar L(x+\Delta)\right) \mathrm dy \right|}{\Delta \bar L(x)} \leq  \frac{1}{1+\gamma}  \cdot \frac{\Delta(x)}{x} \longrightarrow 0.
$$
This gives (i). The upper bound in (ii) is obtained in a similar way, by combining Lemma~\ref{lem:increments}(ii) with \eqref{eq:VL}.
\end{proof}

\subsection{Asymptotics of \texorpdfstring{$\mathbb E[\exp(-q\xi_{\rho(t)})]$ when $t \rightarrow \infty$}{the Laplace transform at large times}}
\label{sec:prooffix}

In this section $q>0$ is fixed and we let $t \rightarrow \infty$. We have set up all the key elements necessary for the proof of Theorem~\ref{thm:asympBehavior}, which ends as follows.

\begin{proof}[\bf Proof of Theorem~\ref{thm:asympBehavior}.] Set for $u,t,q>0$,
$$
I_{u,t,q}:=\bar V \left(a_{u,t,q}+b_{u,t,q}\right)-\bar V \left(a_{u,t,q}-b_{u,t,q}\right),
$$
with the notation $a,b$ of Lemma~\ref{eq:dev},
so that
$$
\mathbb E\left[e^{-q\xi_{\rho(t)}}\right]=\phi(q)e^{-2 \sqrt{\Phi(q)t}}\int_0^{\infty} 2ue^{-u^2} I_{u,t,q} \mathrm du.
$$
Then first note that for each fixed $u,q>0$,
$$
a_{u,t,q} \underset{t \rightarrow \infty}{\sim} \frac{\sqrt t}{\sqrt{\Phi(q)}} \quad \text{and} \quad b_{u,t,q} \underset{t \rightarrow \infty}{\sim}  \frac{u t^{1/4}}{(\Phi(q))^{3/4}}
$$
and that Corollary~\ref{cor:increments} (i) yields
$$
I_{u,t,q} \underset{t \rightarrow \infty}{\sim} \frac{2 b_{u,t,q} \bar L(a_{u,t,q}-b_{u,t,q})}{1+\gamma}.
$$
Together with the regular variation of $\bar L$ at $\infty$ (Lemma~\ref{lem:varVL} (i)), we get that
$$\frac{I_{u,t,q}}{t^{1/4} \bar L(\sqrt t)} \underset{t\rightarrow \infty} \longrightarrow \frac{2u}{1+\gamma} \cdot \Phi(q)^{\frac{1}{2(1+\gamma)}-\frac{3}{4}}.$$
It remains to conclude with the dominated convergence converge theorem that this implies
\begin{equation}
\label{cv:DCT}
\frac{\int_0^{\infty}2ue^{-u^2} I_{u,t,q}\mathrm du}{t^{1/4} \bar L(\sqrt t)} \underset{t\rightarrow \infty} \longrightarrow  \frac{1}{1+\gamma} \cdot \Phi(q)^{\frac{1}{2(1+\gamma)}-\frac{3}{4}} \int_0^{\infty}4u^2e^{-u^2}\mathrm du.
\end{equation}
The convergence indeed gives Theorem~\ref{thm:asympBehavior}  since $\Gamma(\frac{\gamma}{\gamma+1})L(\sqrt t) \underset{t \rightarrow \infty}\sim \Phi^{-1}(1/\sqrt t)$ by Lemma~\ref{lem:varVL} (i) and
 $\int_0^{\infty} 4u^2e^{-u^2}\mathrm du=\sqrt \pi$.

\bigskip

To apply the dominated convergence theorem and get \eqref{cv:DCT}, we need to split the term under the integral into several pieces. In that aim, note that for all $A>0$, there exists $t(A)>0$ (that may depend on $q$) such that for all $t \geq t(A)$ and then all $u\leq \frac{\sqrt t}{2\sqrt{A}}$
$$
a_{u,t,q}-b_{u,t,q}=\frac{t}{(a_{u,t,q}+b_{u,t,q})  \Phi(q)} \geq A
$$
(to see this note that the fraction in the left-hand side is decreasing in $u$). We denote by
$$E(t,A)=\left\{u>0: a_{u,t,q}-b_{u,t,q} \geq A \right\}.$$
We also need the following consequence of the regular variation of $L$ with index $-1/(1+\gamma)$: there exists $x_2>0$ such that for all $x \geq y  \geq x_2$,
$$
\frac{\bar L(y)}{\bar L(x)} \leq 2 \left(\frac{x}{y}\right)^{\frac{2}{(1+\gamma)}},
$$
applying the classical Potter's bounds for regularly varying functions (\cite[Theorem 1.5.6]{BGT}).
So, with the notation $x_1$ of Corollary~\ref{cor:increments} (ii), if $t \geq t(\max(x_1,x_2))$ and $u \in E(t,\max(x_1,x_2))$, we have that
\begin{eqnarray*}
\left|I_{u,t,q}\right| &\underset{\text{by Cor.~\ref{cor:increments} (ii)}}\leq& 4 b_{u,t,q} \bar L(a_{u,t,q}-b_{u,t,q}) \left( 1+ \frac{2b_{u,t,q}}{a_{u,t,q}-b_{u,t,q}} \right) \\
&\underset{\text{by Potter's bounds}}\leq& 4 b_{u,t,q}  \left( 1+ \frac{2\Phi(q)b_{u,t,q}(a_{u,t,q}+b_{u,t,q})}{t} \right) \\
&& \qquad \quad \times 2 \ \bar L \left({\frac{\sqrt t}{\sqrt{\Phi(q)}}}\right) \left({\frac{ \sqrt{\Phi(q)}}{\sqrt t}}\left(a_{u,t,q}+b_{u,t,q}\right)\right)^{\frac{2}{(1+\gamma)}}
\end{eqnarray*}
(we have also used that $(a_{u,t,q}-b_{u,t,q})(a_{u,t,q}+b_{u,t,q})=t/\Phi(q)$). Taking $t$ larger if necessary, we also have that $$ \bar L\left({\frac{\sqrt t}{\sqrt{\Phi(q)}}}\right)\leq \bar L(\sqrt t) \left(2(\Phi(q))^{\frac{2}{(1+\gamma)}} \wedge 1\right)$$ (using the Potter's bounds when $\Phi(q)\geq 1$ and that $\bar L$ is decreasing when  $\Phi(q)\leq 1$).
Using the definitions of $a_{u,t,q},b_{u,t,q}$, we then deduce from these bounds that for $t$ large enough and all $u$ in $E(t,\max(x_1,x_2))$,
\begin{equation*}
\label{majo:I}
\left|I_{u,t,q}\right|\leq  c(q) t^{1/4} \bar L\big({\sqrt t}\big) P(u)\left(u^2+u+1\right)^{\frac{2}{1+\gamma}},
\end{equation*}
where $P$ is a polynomial and $c(q)$ does not depend on $t$ and $u \in E(t,\max(x_1,x_2))$. We can therefore apply the dominated convergence theorem and get that $$\frac{\int_0^{\infty} 2ue^{-u^2} I_{u,t,q} \mathbbm 1_{u \in E(t,\max(x_1,x_2))}\mathrm du}{t^{1/4} \bar L(\sqrt t)} \underset{t\rightarrow \infty} \longrightarrow  \frac{1}{1+\gamma} \cdot \Phi(q)^{\frac{1}{2(1+\gamma)}-\frac{3}{4}} \int_0^{\infty}4u^2e^{-u^2}\mathrm du.$$ It remains to show that $\int_0^{\infty} 2ue^{-u^2} I_{u,t,q} \mathbbm 1_{u \notin E(t,\max(x_1,x_2))}\mathrm du=o\big({t^{1/4} \bar L(\sqrt t)}\big)$. In that aim we write for $u \notin E(t,\max(x_1,x_2))$
\begin{eqnarray*}
I_{u,t,q}&=& \bar V \left(a_{u,t,q}+b_{u,t,q}\right)-\bar V(\max(x_1,x_2))+\bar V(\max(x_1,x_2))-\bar V \left(a_{u,t,q}-b_{u,t,q}\right) \\
& \leq&  \bar V \left(a_{u,t,q}+b_{u,t,q}\right)-\bar V(\max(x_1,x_2))+\bar V(\max(x_1,x_2)) \\
&{\leq}&  2 \bar L(\max(x_1,x_2)) \left(a_{u,t,q}+b_{u,t,q}-\max(x_1,x_2)+\frac{(a_{u,t,q}+b_{u,t,q}-\max(x_1,x_2))^2}{\max(x_1,x_2)}  \right) \\
&\leq& d(q) \left( u^4+u^2 \sqrt t +1\right)
\end{eqnarray*}
for some $d(q)$ independent of $t>0, u \notin E(t,\max(x_1,x_2))$, where we used Corollary~\ref{cor:increments} (ii) for the penultimate inequality, and that $u \notin E(t,\max(x_1,x_2))$ implies $a_{u,t,q}-b_{u,t,q}\leq \max(x_1,x_2)$ and the definition of $b_{u,t,q}$ for the last inequality.
Finally, since $u \notin E(t,\max(x_1,x_2))$ implies that  $u>\sqrt t/2\sqrt{(\max(x_1,x_2))}$,
\begin{eqnarray*}
 \frac{\int_0^{\infty} 2ue^{-u^2} \mathbbm 1_{u \notin E(t,\max(x_1,x_2))}I_{u,t,q} \mathrm du}{t^{1/4} \bar L(\sqrt t)} =O \left( \frac{e^{-\frac{t}{4(\max(x_1,x_2))}}}{t^{1/4} \bar L(\sqrt t)} \right)  \underset{t\rightarrow \infty} \longrightarrow 0
\end{eqnarray*}
as required.
\end{proof}

\subsection{Convergence in distribution of \texorpdfstring{$\Phi^{-1}(1/t)\xi_{\rho(t)}$}{the tagged fragment}}
\label{sec:proofscaling}

This section is devoted to the proof of Theorem~\ref{thm:asymp_distribution}, i.e. the scaling limit of $\xi_{\rho(t)}$ as $t\rightarrow \infty$. This is done using Theorem~\ref{thm:lapTransform} to study the joint asymptotic behavior of the Laplace transform of $\xi_\rho(t)$ as $t \to \infty$ and $q \to 0$.

\subsubsection{Proof of Theorem~\ref{thm:asymp_distribution} (i)}

In this part, we only assume (\ref{hyp:rv}). The convergence in distribution of $\Phi^{-1}(1/t)\xi_{\rho(t)}$ to $D(\gamma)$ could certainly be proved by using that under  (\ref{hyp:rv}) $\xi$ is in the domain of attraction of a stable subordinator and the definition of the time change $\rho$. We present here an alternative proof, which is based on Theorem~\ref{thm:lapTransform}. We use again the rewriting of Lemma~\ref{eq:dev} which gives for $\lambda,t>0$
\begin{equation}
\label{eq:scaling}
\mathbb E\left[e^{-\lambda \Phi^{-1}(1/t)\xi_{\rho(t)}} \right]=\phi\left(\lambda \Phi^{-1}(1/t)\right)e^{-2 \sqrt{\Phi\left(\lambda \Phi^{-1}(1/t)\right)t}}\int_0^{\infty} 2ue^{-u^2}J_{u,t} \mathrm du
\end{equation}
with
$$
J_{u,t}=\bar V\left(a_{u,t,\lambda  \Phi^{-1}(1/t)}+b_{u,t,\lambda  \Phi^{-1}(1/t)}\right)- \bar V\left(a_{u,t, \lambda \Phi^{-1}(1/t)}-b_{u,t, \lambda \Phi^{-1}(1/t)}\right)
$$
with the notation $a,b$ of Lemma~\ref{eq:dev}.

In what follows $\lambda$ is fixed. Under the regular variation hypothesis  (\ref{hyp:rv}) we clearly have that when $t \rightarrow \infty$,
$
\Phi\left(\lambda \Phi^{-1}(1/t)\right)t \rightarrow \lambda^{\gamma+1}
$
and $$
\phi\left(\lambda \Phi^{-1}(1/t)\right)\ \sim \ (\gamma+1)\ \frac{\Phi\left(\lambda \Phi^{-1}(1/t)\right)}{\lambda \Phi^{-1}(1/t)} \ \sim \ \frac{(\gamma+1)  \lambda^{\gamma} }{ t\Phi^{-1}(1/t)} \ \sim \ \frac{(\gamma+1)  \lambda^{\gamma} }{\gamma \Gamma \left(\frac{\gamma}{\gamma+1} \right)} \cdot  \bar V(t)
$$
the last equivalence being a consequence of Lemma~\ref{lem:varVL} (i).  Using further the regular variation of $\bar V$, we note that
\begin{equation*}
J_{u,t} \ \sim \ A(u,\lambda)   \cdot  \bar V(t)
\end{equation*}
where
\begin{equation*}
A(u,\lambda):=\left(\frac{u^2+2 \lambda^{\frac{\gamma+1}{2}}+u \sqrt{u^2+4 \lambda^{\frac{\gamma+1}{2}}}}{2 \lambda ^{\gamma+1}} \right)^{\frac{\gamma}{\gamma+1}} - \left(\frac{u^2+2 \lambda^{\frac{\gamma+1}{2}}-u \sqrt{u^2+4 \lambda^{\frac{\gamma+1}{2}}}}{2 \lambda ^{\gamma+1}} \right)^{\frac{\gamma}{\gamma+1}},
\end{equation*}
and also that according to the Potter's bounds (\cite[Theorem 1.5.6]{BGT})
$$
 \frac{J_{u,t}}{\bar V(t)} \leq  \frac{2\gamma}{1+\gamma} \left(\frac{u^2+2 \lambda^{\frac{\gamma+1}{2}}+u \sqrt{u^2+4 \lambda^{\frac{\gamma+1}{2}}}}{2\lambda ^{\gamma+1}} \right)^{\frac{\gamma}{\gamma+1}}
$$
for all $t$ large enough and then all $u>0$. We can therefore apply the dominated convergence theorem and obtain from (\ref{eq:scaling}) that
\begin{equation*}
\lim_{t \to \infty} \mathbb E\left[e^{-\lambda \Phi^{-1}(1/t)\xi_{\rho(t)}} \right] \ = \ \frac{(\gamma+1)\lambda^{\gamma} }{\gamma \Gamma  \left(\frac{\gamma}{\gamma+1} \right)}  \cdot  e^{-2 \lambda^{\frac{\gamma+1}{2}}}   \cdot  \int_0^{\infty} 2ue^{-u^2} A(u,\lambda) \mathrm du.
\end{equation*}
We conclude by noticing that this last expression is the Laplace transform $\mathbb E[e^{-\lambda \xi_1^{(\gamma)}}]$ when  $\xi^{(\gamma)}$ is a subordinator with Laplace exponent $q \mapsto (\gamma+1)q^{\gamma}$, i.e. the Laplace transform of the distribution $D(\gamma)$. This is easily seen by using the expression of the L\'evy measure $L$ associated to $\xi^{(\gamma)}$, see the paragraph preceding Corollary~\ref{cor:stable}, and Lemma~\ref{eq:dev}.

\subsubsection{Proof of Theorem~\ref{thm:asymp_distribution} (ii)}

We assume now that \eqref{hyp:rv} holds with $\gamma=1$. The almost sure convergence when $m=c+\int_0^{\infty}x\pi(\mathrm dx)$ is finite follows trivially from the almost sure convergence of $t^{-1}\xi_t$ to $m$ and the definition (\ref{eq:time_change}) of $\rho$ which then yields $m\rho(t)^2/2 \sim t$ as $t \to \infty$.

Assuming additionally that $a=\int_0^{\infty}x^2\pi(\mathrm dx)<\infty$, we want to prove a central limit theorem. We first note that under this hypothesis:
\begin{equation}
\label{equiv:Phi}
\phi(q)\underset{q \rightarrow 0}{=}mq-\frac{a}{2} q^2+o(q^2), \qquad \Phi(q)\underset{q \rightarrow 0}{=}\frac{m}{2}q^2-\frac{a}{3!}q^3 +o(q^3), \qquad \Phi^{-1}(q)\underset{q \rightarrow 0}{\sim } \sqrt{\frac{2}{m}q}.
\end{equation}
By Lemma~\ref{eq:dev}, for all $\lambda>0$ and all $t>0$
\begin{eqnarray*}
\mathbb E\left[e^{-\lambda t^{1/4}\left(\frac{\xi_{\rho(t)}}{\sqrt t} -\sqrt{2m} \right)} \right] = e^{\lambda t^{1/4}\sqrt{2m}} \phi\big(\lambda t^{-1/4}\big) e^{-2 \sqrt{\Phi\left(\lambda t^{-1/4}\right)t}} \int_0^{\infty}2ue^{-u^2} K_{u,t} \mathrm du
\end{eqnarray*}
where
$$
K_{u,t}:=\bar V\left(a_{u,t,\lambda t^{-1/4}}+b_{u,t,\lambda t^{-1/4}}\right)- \bar V\left(a_{u,t,\lambda t^{-1/4}}-b_{u,t,\lambda t^{-1/4}}\right).
$$
Using the asymptotic expansion at 0 of $\Phi$ and $\sqrt{1-x}=1- \frac{x}{2}+o(x)$ as $x \rightarrow 0$, we immediately get that
$$
e^{\lambda t^{1/4}\sqrt{2m}}e^{-2 \sqrt{\Phi\left(\lambda t^{-1/4}\right)t}} \ \underset{t \rightarrow \infty}{\longrightarrow} \ e^{\frac{a}{3 \sqrt{2m}} \lambda^2},
$$
where we recognize in the right-hand side the Laplace transform   of a centered gaussian distribution with variance $\frac{\sqrt 2a}{3 \sqrt{m}}$.
It remains to prove that
\begin{equation}
\label{conc}
\phi\left(\lambda t^{-1/4}\right) \int_0^{\infty}2ue^{-u^2}  K_{u,t} \mathrm du   \ \underset{t \rightarrow \infty}{\longrightarrow} 1.
\end{equation}
We proceed as in the proof of Theorem~\ref{thm:asympBehavior}, relying on Corollary~\ref{cor:increments}. First, note that by (\ref{equiv:Phi}), for each fix $u>0$,
$$
a_{u,t,\lambda t^{-1/4}} \ \underset{t \rightarrow \infty}{\sim} \sqrt{\frac{2}{m}} \cdot \frac{t^{3/4}}{\lambda} \quad \text{and} \quad  b_{u,t,\lambda t^{-1/4}} \ \underset{t \rightarrow \infty}{\sim} \ u \left(\frac{2}{m}\right)^{3/4} \cdot \frac{t^{5/8
}}{ \lambda^{3/2}}.
$$
Consequently, for each fixed $u>0$, by Corollary~\ref{cor:increments} (i) (since $b_{u,t,\lambda t^{-1/4}} =o(a_{u,t,\lambda t^{-1/4}} )$  and $\gamma=1$)
\begin{eqnarray*}
K_{u,t} & \underset{t \rightarrow \infty}{\sim} & b_{u,t,\lambda t^{-1/4}} \bar L\big(a_{u,t,\lambda t^{-1/4}} -b_{u,t,\lambda t^{-1/4}} \big) \\
& \underset{t \rightarrow \infty}{\sim} &  b_{u,t,\lambda t^{-1/4}} \frac{\sqrt 2}{\sqrt{m \pi a_{u,t,\lambda t^{-1/4}}}}  \\
 & \underset{t \rightarrow \infty}{\sim} & \frac{u}{\sqrt \pi} \cdot \frac{2}{m}  \cdot \frac{t^{1/4}}{\lambda}
\end{eqnarray*}
where in the second line we used Lemma~\ref{lem:varVL} (i) and (\ref{equiv:Phi}) to get that
$$
\bar L(x) \underset{x \rightarrow \infty} \sim \frac{1}{\Gamma(1/2)} \cdot \Phi^{-1}\left(\frac{1}{x}\right) \underset{x \rightarrow \infty} \sim  \frac{\sqrt 2}{\sqrt{m \pi x}} .
$$
If we could apply the dominated convergence theorem, this would indeed imply (\ref{conc}) since $\phi\big(\lambda t^{-1/4}\big) \sim m \lambda t^{-1/4}$ and $\int_0^{\infty}2u^2e^{-u^2}\mathrm du= \sqrt \pi /2$. The proof to dominate appropriately \linebreak $\phi\big(\lambda t^{-1/4}\big)K_{u,t}$ holds in a similar way as we did in the proof of Theorem~\ref{thm:asympBehavior}, by splitting according to whether $u \geq a \sqrt t$ or $u \leq a \sqrt t$ for some appropriate constant $a$. We leave the details to the reader.

\section{Asymptotics of the whole fragmentation process}
\label{sec:wholefrag}

Let $F$ be a fragmentation process with characteristics $(\nu,c, |\log|^{-1})$ and recall the notations  $$\phi_{(\nu,c)}(q)=cq+\int_{\mathcal S} \sum_{i\geq 1}s_i(1-s_i^q) \nu(\mathrm d \mathbf s),$$
$\Phi_{(\nu,c)}$ for the primitive of $\phi_{(\nu,c)}$ null at 0 and $\Phi^{-1}_{(\nu,c)}$ for the inverse of $\Phi_{(\nu,c)}$. The computations of the previous section show that the logarithm of the size of a typical fragment of the process $F$ at a large time $t$ will typically be of order $-1/\Phi_{(\nu,c)}^{-1}(1/t)$. We thus take interest in the following random probability point measure on $(0,\infty)$, which captures the makeup of the population at a large time
\[
  \mathcal{E}_t := \sum_{i \geq 1} F_i(t) \delta_{\Phi_{(\nu,c)}^{-1}(1/t) |\log F_i(t)|}.
\]
Our goal is to prove the asymptotics of this measure settled in Proposition~\ref{thm:cvdps} and for this we follow the same strategy as that of the proof of \cite[Theorem 1]{BertoinAB03} by considering one and then two typical fragments.
It is worth noting that Proposition~\ref{prop:taggedFragment} and Theorem~\ref{thm:asymp_distribution} (i) immediately imply that for all continuous bounded functions $f:(0,\infty) \rightarrow \mathbb R$, when $\phi_{(\nu,c)}$ is regularly varying at 0 with index $\gamma \in (0,1]$,
\begin{equation}
\label{eq:intermediaire}
  \E\left[ \int_0^{\infty} f \dd \mathcal{E}_t \right] = \E\left[ f\left(\Phi_{(\nu,c)}^{-1}(1/t)\xi_{\rho^{(\nu,c)}(t)} \right) \right]  \ {\underset{t \to \infty}\longrightarrow} \ \int_0^{\infty} f \dd D(\gamma).
\end{equation}
Therefore, to complete the proof of Proposition~\ref{thm:cvdps} (i), it will be enough to show that $\int_0^{\infty} f \dd \mathcal{E}_t$ is well-concentrated around its mean. To do so, we study the asymptotics of the second moment of this quantity.
\begin{lemma}
\label{lem:secondMoment}
Assume that $\phi_{(\nu,c)}$ is regularly varying at 0 with index $\gamma \in (0,1]$. Then for all continuous bounded functions $f:(0,\infty) \rightarrow \mathbb R$
\[
\E\left[ \left(\int_0^{\infty} f \dd \mathcal{E}_t\right)^2 \right]  \ {\underset{t \to \infty}\longrightarrow} \ \left( \int_0^{\infty} f \dd D(\gamma) \right)^2.
\]
\end{lemma}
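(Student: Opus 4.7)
The plan is to adapt the two-spine argument from the proof of \cite[Theorem 1]{BertoinAB03}. The starting point is the size-biasing identity $\sum_{i\geq 1} F_i(t)\, g(|\log F_i(t)|) = \int_0^1 g(|\log|I^\h_{T^x_t,x}||)\, \dd x$, valid for any bounded Borel $g$, which, together with the conditional independence of two independent uniform samples $U_1, U_2$ on $(0,1)$ given the fragmentation, yields
\[
\E\left[\left(\int_0^\infty f\, \dd \mathcal{E}_t\right)^2\right] = \E\Big[ f\big(\Phi_{(\nu,c)}^{-1}(1/t)\, \xi^{(1)}_{\rho_1(t)}\big)\, f\big(\Phi_{(\nu,c)}^{-1}(1/t)\, \xi^{(2)}_{\rho_2(t)}\big)\Big],
\]
where $\xi^{(i)}_u := -\log|I^\h_{u,U_i}|$ and $\rho_i(t) := T^{U_i}_t = \inf\{u : \int_0^u \xi^{(i)}_r \dd r > t\}$.

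By the branching property of the homogeneous fragmentation, $\xi^{(1)}$ and $\xi^{(2)}$ are two subordinators with Laplace exponent $\phi_{(\nu,c)}$ coinciding on $[0,D)$ for some a.s.\ finite separation time $D$; after time $D$, the increments $\eta^{(i)}_r := \xi^{(i)}_{D+r} - \xi^{(i)}_D$, $i=1,2$, are two independent copies of a subordinator with Laplace exponent $\phi_{(\nu,c)}$, jointly independent of $\mathcal{F}_D := \sigma(\xi^{(1)}_u, \xi^{(2)}_u : u \leq D)$. Setting $y_i := \xi^{(i)}_D$ and $A_i := \int_0^D \xi^{(i)}_r\, \dd r$ (both $\mathcal{F}_D$-measurable and a.s.\ finite), the proof reduces to showing that
\[
\big(\Phi^{-1}_{(\nu,c)}(1/t)\, \xi^{(1)}_{\rho_1(t)},\; \Phi^{-1}_{(\nu,c)}(1/t)\, \xi^{(2)}_{\rho_2(t)}\big) \ \xrightarrow[t \to \infty]{\mathrm{(d)}} \ (X_1, X_2),
\]
with $X_1, X_2$ iid of law $D(\gamma)$; bounded convergence applied to the continuous bounded map $(x_1,x_2)\mapsto f(x_1)f(x_2)$ then completes the argument.

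For this joint convergence, on the event $\{\rho_i(t) > D\}$ (which a.s.\ eventually holds), one decomposes
\[
\xi^{(i)}_{\rho_i(t)} = y_i + \eta^{(i)}_{\tilde\rho_i(t-A_i)}, \qquad \tilde\rho_i(s) := \inf\left\{u\geq 0 : y_i u + \int_0^u \eta^{(i)}_r\, \dd r > s\right\}.
\]
Since $y_i<\infty$ a.s.\ and $\Phi^{-1}_{(\nu,c)}(1/t) \to 0$, the additive shift $\Phi^{-1}_{(\nu,c)}(1/t)\, y_i$ vanishes, and the conditional independence of $\eta^{(1)}, \eta^{(2)}$ given $\mathcal{F}_D$ provides the joint independence of the two limits. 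Theorem~\ref{thm:asymp_distribution}(i) applied to each $\eta^{(i)}$ gives $\Phi^{-1}_{(\nu,c)}(1/t)\, \eta^{(i)}_{\rho^\eta_i(t)} \to D(\gamma)$ in distribution, where $\rho^\eta_i$ denotes the canonical time-change of $\eta^{(i)}$. It thus remains to establish $\Phi^{-1}_{(\nu,c)}(1/t)\big(\eta^{(i)}_{\rho^\eta_i(t)} - \eta^{(i)}_{\tilde\rho_i(t-A_i)}\big) \to 0$ in probability. The identity $\int_{\tilde\rho_i(s)}^{\rho^\eta_i(s)} \eta^{(i)}_r\, \dd r = y_i\, \tilde\rho_i(s)$, combined with the strictly super-linear growth of $u \mapsto \int_0^u \eta^{(i)}_r\, \dd r$ a.s.\ under \eqref{hyp:rv} (since $\eta^{(i)}_r/r \to +\infty$ a.s.\ when $\gamma<1$, and $\eta^{(i)}_r/r \to m \in (0,\infty]$ a.s.\ when $\gamma=1$), forces $\rho^\eta_i(s) - \tilde\rho_i(s) = o(\rho^\eta_i(s))$ a.s., and the substitution $t \leadsto t-A_i$ is harmless by the regular variation of $\Phi^{-1}_{(\nu,c)}$.

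\textbf{Main obstacle.} The technical heart of the argument lies in this last step: quantifying the effect of the extra drift $y_i u$ in $\tilde\rho_i$ after rescaling by $\Phi^{-1}_{(\nu,c)}(1/t)$. This requires uniform regular-variation estimates on the increments of $\eta^{(i)}$ over the (random) interval $[\tilde\rho_i(t), \rho^\eta_i(t)]$, to be handled uniformly across the three regimes $\gamma\in(0,1)$, $\gamma=1$ with $m<\infty$, and $\gamma=1$ with $m=\infty$. The tools developed in Section~\ref{sec:incrementsofV}, combined with Potter's bounds from \cite{BGT}, should deliver this.
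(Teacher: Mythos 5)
Your plan is essentially the paper's own proof: two tagged fragments obtained from independent uniforms, the size-biasing identity turning the second moment of $\int f\,\dd\mathcal{E}_t$ into an expectation over the pair of (dependent) tagged subordinators, and asymptotic independence after the a.s.\ finite separation time combined with Theorem~\ref{thm:asymp_distribution}~(i) and bounded convergence. The "main obstacle" you single out is exactly the point the paper disposes of in one line (the separation time is a.s.\ finite and $\Phi_{(\nu,c)}^{-1}(1/t)\to 0$, so the shared pre-separation contribution is negligible); it does not require the uniform increment estimates of Section~\ref{sec:incrementsofV} --- conditioning on the separation data and sandwiching $\tilde\rho_i(t-A_i)$ between $\rho^{\eta}_i$ evaluated at deterministic times $t(1-\epsilon)-A_i$ and $t$, then using monotonicity of $\eta^{(i)}$ and the marginal convergence of Theorem~\ref{thm:asymp_distribution}~(i), already squeezes out the claim.
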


Using this lemma, the proof Proposition~\ref{thm:cvdps} (i) follows straightforwardly.

\begin{proof}[\bf Proof of Proposition~\ref{thm:cvdps} (i)]
Using Lemma~\ref{lem:secondMoment} together with (\ref{eq:intermediaire}),
we see that $\int_0^{\infty} f \dd \mathcal{E} \to \int_0^{\infty} f \dd D(\gamma)$ in $L^2$. Applying this result to $x \mapsto e^{i a x}$, we deduce that the Fourier transform of $\mathcal{E}_t$ converges pointwise in probability to the Fourier transform of $D(\gamma)$. As a result, $\mathcal{E}_t$ converges pointwise in probability to $D(\gamma)$.
\end{proof}

We now turn to the proof of Lemma~\ref{lem:secondMoment}.

\begin{proof}[\bf Proof of Lemma~\ref{lem:secondMoment}]
We use the construction of the fragmentation from its homogeneous counterpart as detailed in Section~\ref{sec:intro_frag}. Consider $(I^\h_j(t), j \geq 1)$ an interval version of this homogeneous fragmentation with dislocation measure $\nu$ and erosion coefficient $c$. Let $U$ and $U'$ be two independent uniformly distributed on $(0,1)$ random variables, independent of $I^\h$, and define
\[
  \xi_t = -\log |I^\h_{t,U}| \quad \text{and}\quad \xi'_t = -\log |I^\h_{t,U'}|,
\]
where $I^\h_{t,U}$, respectively $I^\h_{t,U'}$, stands from the unique interval at time $t$ containing $U$, resp. $U'$. We note that $\xi$ and $\xi'$ are two subordinators with Laplace exponent $\phi{(\nu,c)}$, which are \emph{not} independent. However, writing
\[
  T  := \inf\left\{t > 0 : I^\h_{t,U} \neq I^\h_{t,U'}\right\},
\]
we note that $(\xi_{T+t} - \xi_T,t\geq 0)$ and $(\xi'_{T+t} - \xi_T',t\geq 0)$ are i.i.d. subordinators, by the strong Markov property. It is then a simple computation to remark, with transparent notation, that
\[
  \E\left[ \left(\int_{0}^{\infty} f \dd \mathcal{E}_t\right)^2 \right]  = \E\left[ f\left(\Phi_{(\nu,c)}^{-1}(1/t) \xi_{\rho(t)}) f(\Phi_{(\nu,c)}^{-1}(1/t) \xi'_{\rho'(t)}\right)\right].
\]
Since $\rho(T)=\rho'(T)$ is an a.s. finite stopping time and $\Phi_{(\nu,c)}^{-1}(1/t) \to 0$, we conclude that $\Phi_{(\nu,c)}^{-1}(1/t) \xi_{\rho(t)}$ and $\Phi_{(\nu,c)}^{-1}(1/t) \xi'_{\rho'(t)}$ are asymptotically independent, yielding
\[
  \E\left[ \left(\int_0^{\infty} f \dd \mathcal{E}_t\right)^2 \right]  \ {\underset{t \to \infty}\longrightarrow} \ \left( \int_0^{\infty} f \dd D(\gamma) \right)^2,
\]
which completes the proof.
\end{proof}

To finish, the proof of Proposition~\ref{thm:cvdps} (ii) holds very similarly by combining Proposition~\ref{prop:taggedFragment} and Theorem~\ref{thm:asymp_distribution} (ii) with a concentration result which is proved by using 2 typical fragments. This is left to the reader.

\section{Comparison with the convex hull asymptotic}
\label{sec:discussions}

We return to our initial motivation and finish with some informal thoughts, notably regarding the observed discrepancy between our result in Proposition~\ref{prop:unifBound} and the prediction in \cite{DBBM22}. Indeed, recall that the physicists predicted that
\[
  2\pi - \E[\bar{P}_t] \underset{t \to \infty}{\sim} c t^{1/4} e^{-2t^{1/2}},
\]
while we show that for the fragmentation toy-model, choosing $\Phi$ so that $\Phi(2)=1$, we have
\[
\mathbb E \Bigg[ \sum_{i \geq 1} F_i(t)^3 \Bigg]\leq C\big(1+ t^{1/8}\big) e^{-2t^{1/2}}.
\]
Several factors might explain the difference in the asymptotic behavior of the convex hull of the Brownian motion and our simplified model, we list here some potential explanations.

\begin{description}
  \item[Exponential distribution in the narrow escape problem.] We use the approximation \eqref{eqn:narrowEscapeApproximation} in the narrow escape problem to approach the first hitting time of an interval of length $\epsilon$ by $B^{\D}$ by an exponential random variable with parameter $|\log \epsilon|$. It is possible that using a different rate of division might lead us to the correct asymptotic. However, note that this hypothesis is identical to the one made in \cite{DBBM22} to obtain \eqref{equi:DBBM22}.
  \item[Correlations between the hitting times of distinct intervals.] Contrarily to our modeling assumptions, the hitting time of distinct intervals are not  independent in general. In fact, hitting times of neighboring intervals should be quite close with positive probability in the limit of small intervals. However, for one part the asymptotic independence assumption should hold as long as intervals are not geographically close to one another, and from the other part, by linearity of the expectation, considering a model with correlated fragmentation times should not modify the value of $\E[\sum_{i\geq1} F_i(t)^{q+1}]$.
  \item[Choice of the fragmentation distribution.] The choice of a self-similar fragmentation procedure has been guided by the following heuristic picture. Consider the set of points visited by the Brownian motion $B^{\D}$ on $\partial \D$ after its first hitting time of a very small interval of length $m$. Using the Brownian caling, we think of these points as a scaled analogue of the set of positions in $\{0\}\times [0,1]$ hit by the Brownian motion $B^\H$ in the half-plane $\H = \R_+\times \R$, reflected at its boundary, started from a uniform point in $\{0\}\times [0,1]$. However, this process being recurrent, we need to introduce a cutoff time in order to define a proper fragmentation distribution. For example, we may set $\nu_A$ for the fragmentation corresponding to the hitting positions of $B^\H$ before its exit of $B(0,A)$. Note that $\nu_A$ converges to $(0,0,\cdots)$ as $A \to \infty$, but we might wish to correct our toy model by splitting an interval of size $\epsilon$ according to the distribution $\nu_{A(\epsilon)}$ with $A(\epsilon) \to \infty$ as $\epsilon \to 0$. This inhomogeneous fragmentation model might have an asymptotic behavior closer to the one observed in \cite{DBBM22}.
\end{description}

We end our discussion with the observation that in dimension greater than $2$, a similar analysis might be doable, but the cutoff mentioned above would no longer be needed as for a Brownian motion $W$ in the half-space $\R_+ \times \R^{d-1}$ with reflection at its boundary, the quantity
\[
  \mathcal{D}_d := \{B_t : t \geq 0, B_t \in \{0\} \times \R^{d-1} \}
\]
gives a well-defined set of points. We could therefore define a fragmentation process approaching the behavior of the convex hull of a Brownian motion in the ball of dimension $d$ with orthogonal reflection at its boundary.

For example, in dimension $3$, we write $\mathbb{B}$ for the ball of unit radius centered at $0$, and $B^\mathbb{B}$ for the Brownian motion in the ball with orthogonal reflection at its boundary. The fragmentation associated to the study of $\{B^\mathbb{B}_s, s \leq t\}$ would be a time-inhomogeneous process of triangles on the sphere. Each fragment would be a triangle $T$, and at a rate given by the asymptotic behavior of the probability for the Brownian motion to hit this triangle, specifically
\[
  \P(\tau > t) \underset{t \to \infty}{\sim} \exp\left( - c(T) t \right),
\]
the element would be divided into fragments given by the Delaunay triangulation of $T$ with vertices given by an i.i.d copy of $\mathcal{D}_3 \cap T$. Here $c(T)$ behaves proportionally to the area of $T$, see \cite{net3}. It would connect the growth of the area of the boundary of the convex hull of a Brownian motion in dimension $3$ to a self-similar fragmentation process with index $1$.

In this situation, writing $A_t$ for the area of the convex hull $C_t$ of $\{B^\mathbb{B}_s, 0 \leq s \leq t\}$, and using the same computations as in the introduction, we remark that
\[
  4 \pi^2 - A_t \approx \sum A(T)^2 
\]
where $A(T)$ represents the area of the triangle $T$, and the sum is taken over all fragments of the triangulation. We recall that for a self-similar fragmentation process $F$ with index $1$, there exists $C> 0$ such that
\[
   \sum_{i\geq 1} F_i(t)^2 \approx C/t,
\]
by \cite[Theorem 3]{BertoinAB03}.
It allows us to predict that $4\pi^2 - A_t$ should be of order $1/t$ as $t \to \infty$.

Note that Cauchy's surface area formula extends in higher dimension \cite{TsV}, allowing us the exact computation
\[
  \E[A_t] = 4 \E\left[ \lambda(\Pi C_t) \right],
\]
where $\lambda$ is the Lebesgue measure on $\R^2$ and $\Pi C_t$ is an orthogonal projection of $C_t$ onto $\R^2$, using again the invariance by rotation of the law of the Brownian motion. Estimating the difference between the volume of the convex subset of $\D$ by twice the difference of perimeters, we would obtain
\[
  4 \pi - \E[A_t] \approx \int_0^1 \P(\tau_x > t) \dd x \quad \text{as $t \to \infty$}
\]
where $\tau_x$ stands here for $\inf\{t> 0 : (1,0,0) \cdot B^\mathbb{B}_t > x\}$. Using again the narrow escape approximation, we can heuristically estimate this integral as
\[
 \int_0^1 e^{-c t (1 - x^2)} \dd x \underset{t \to \infty}\sim \frac{1}{2ct}
\]
which is consistent with our prediction.

\section*{Acknowledgements}
Our warmest thanks to V\'ictor Rivero for answering our questions on papers \cite{DR13} and \cite{DR16_err}.

\bibliographystyle{acm}
\bibliography{biblio.bib}

\end{document}